\title[MCS formulation with weakly imposed symmetry]{A mass conserving mixed stress formulation for Stokes flow with weakly imposed stress symmetry}
\thanks{
Philip L. Lederer has been funded by the Austrian Science Fund (FWF) through the research program ``Taming complexity in partial differential systems'' (F65) - project ``Automated discretization in multiphysics'' (P10).}
\author[J.~Gopalakrishnan]{Jay Gopalakrishnan}
\address{Portland State University, PO Box 751, Portland OR 97207,USA }
\email{gjay@pdx.edu}
\author[P.~L.~Lederer]{Philip L. Lederer}
\address{Institute for Analysis and Scientific Computing, TU Wien,
Wiedner Hauptstra{\ss}e 8-10, 1040 Wien, Austria}
\email{philip.lederer@tuwien.ac.at}
\author[J.~Sch\"oberl]{Joachim Sch{\"oberl}}
\address{Institute for Analysis and Scientific Computing, TU Wien,
Wiedner Hauptstra{\ss}e 8-10, 1040 Wien, Austria}
\email{joachim.schoeberl@tuwien.ac.at}
\newcommand{\numeoc}[1]{\num[round-precision=1,round-mode=places, scientific-notation=false]{#1}}
\theoremstyle{plain}
\newtheorem{theorem}{Theorem}
\newtheorem{lemma}[theorem]{Lemma}
\newtheorem{corollary}[theorem]{Corollary}
\theoremstyle{remark}
\newtheorem{remark}[theorem]{Remark}
\newcommand{\argmin}{\mathop{\mathrm{argmin}}}
\newcommand{\divergence}{\operatorname{div}}
\newcommand{\eps}{\varepsilon}
\newcommand{\id}{\operatorname{Id}}
\newcommand{\curl}{\operatorname{curl}}
\newcommand{\rr}{\mathbb{R}}
\newcommand{\vecb}[1]{{#1}}
\newcommand{\matb}[1]{{#1}}
\newcommand{\trans}{{\textrm{T}}}
\newcommand{\ecoord}{{\vecb{x}}}
\newcommand{\normal}{{\vecb{n}}}
\newcommand{\tangential}{{\vecb{t}}}
\newcommand{\ds}{\mathop{~\mathrm{d} s}}
\newcommand{\dx}{\mathop{~\mathrm{d} \ecoord}}
\newcommand{\mesh}{\mathcal{T}_h}
\newcommand{\facets}{\mathcal{F}_h}
\newcommand{\facetsint}{\mathcal{F}_h^{\text{int}}}
\newcommand{\facetsext}{\mathcal{F}_h^{\text{ext}}}
\newcommand{\trace}[1]{\textrm{tr}({#1})}
\newcommand{\jump}[1]{ {[\![ #1 ] \!]}}
\newcommand{\Dev}[1]{\operatorname{dev}{\!(#1)}}
\newcommand{\visc}{\nu}
\newcommand{\blfone}{b_{1}}
\newcommand{\ablf}{a}
\newcommand{\blftwo}{b_{2}}
\newcommand{\blfbig}{B}
\newcommand{\Velspace}{{\vecb{V}}}
\newcommand{\Velspaceh}{{\Velspace_h}}
\newcommand{\Velvar}{{\vecb{u}}}
\newcommand{\Velvartest}{{\vecb{v}}}
\newcommand{\Velvarh}{{\vecb{u}_h}}
\newcommand{\Velvarhtest}{\vecb{v}_h}
\newcommand{\Velnormh}[1]{|| #1 ||_{U_h}}
\newcommand{\Sigmaplus}{{\Sigma_h^{+}}}
\newcommand{\Stressspace}{{\matb{\Sigma}}}
\newcommand{\Stressspaceh}{{\Stressspace_h}}
\newcommand{\Stressvar}{{\matb{\sigma}}}
\newcommand{\Stressvartest}{{\matb{\tau}}}
\newcommand{\Stressvarh}{\matb{\sigma}_h}
\newcommand{\Stressvarhtest}{\matb{\tau}_h}
\newcommand{\Sigmanormh}[1]{\left\| #1 \right\|}
\newcommand{\Presspace}{{Q}}
\newcommand{\Presspaceh}{{\Presspace_h}}
\newcommand{\Presvar}{{p}}
\newcommand{\Presvartest}{{q}}
\newcommand{\Presvarh}{{p_h}}
\newcommand{\Presvarhtest}{{q_h}}
\newcommand{\Presnormh}[1]{|| #1 ||}
\newcommand{\Hcurldiv}[1]{H(\curl\divergence,#1)}
\newcommand{\Poly}{{\mathbb{P}}}
\newcommand{\Forcevar}{{\vecb{f}}}
\newcommand{\RRR}{\mathbb{R}}
\newcommand{\om}{\Omega}
\newcommand{\kernel}{K_h}
\newcommand{\proj}{\Pi}
\newcommand{\piola}{\mathcal{P}}
\newcommand{\covariantpiola}{ \mathcal{M}}
\newcommand{\MM}{\covariantpiola}
\newcommand{\dd}{\mathbb{D}}
\newcommand{\DD}{\mathcal{D}}
\let\d\partial
\newcommand{\grad}{\nabla}
\renewcommand{\div}{\operatorname{div}}
\newcommand{\dt}{{\tilde{d}}}
\newcommand{\ip}[1]{\langle {#1} \rangle}
\newcommand{\RT}{{\mathcal{RT}}}
\newcommand{\RM}{{\mathbb{E}}}
\newcommand{\BDM}{{\mathcal{BDM}}}
\newcommand{\skw}{\mathop\text{skw}}
\newcommand{\kk}{\mathbb{K}}
\newcommand{\mm}{\mathbb{M}}
\newcommand{\vv}{\mathbb{V}}
\newcommand{\hatcurl}{\hat{\curl}}
\newcommand{\ckorn}{c_K}
\def\addlegendimage{\csname pgfplots@addlegendimage\endcsname}
\begin{document}


\begin{abstract}
  %
  %
  We introduce a new discretization of a mixed formulation of the
  incompressible Stokes equations that includes symmetric viscous stresses.
  The method is built upon a mass conserving mixed formulation that we
  recently studied. The improvement in this work is a new method
  that directly approximates the viscous fluid stress
  $\sigma$, enforcing its symmetry weakly.
  The finite element
  space in which the stress is approximated consists of 
  matrix-valued
  functions having continuous ``normal-tangential'' components across
  element interfaces.
  Stability is achieved by adding  certain matrix bubbles that were introduced
  earlier in the literature on finite elements for linear elasticity.  
  Like the earlier work, the new method here  approximates 
  the fluid velocity $u$ using $H(\divergence)$-conforming finite elements,
  thus providing exact mass conservation.
  Our error analysis shows optimal convergence rates
  for the pressure and the stress variables. An additional post processing yields
  an optimally convergent velocity satisfying exact mass conservation.
  The method is also pressure robust.
\end{abstract}

\keywords{mixed finite element methods; incompressible flows; Stokes equations; weak symmetry}


\maketitle

\section{Introduction}   \label{sec:introduction}

In this work we introduce a new method for the discretization of
steady incompressible Stokes system that includes symmetric viscous stresses. Let $\Omega \subset \rr^d$ be a bounded domain with $d =2$ or $3$ having a Lipschitz boundary $\Gamma := \partial \Omega$. Let $u$ and $p$ be the velocity and the pressure, respectively. Given an external body force $f:\om \to \RRR^d$ 
and kinematic viscosity $\tilde{\nu}:\om \to \RRR$, the
velocity-pressure formulation of the Stokes system is given by
\begin{align} \label{eq::stokes}
  \left\{
  \begin{aligned}
  -\divergence(2\tilde{\nu} \eps(\Velvar)) + \nabla \Presvar & = \Forcevar && \textrm{in } \Omega,  \\
  \divergence (\Velvar) &=0 && \textrm{in } \Omega, \\
  \Velvar &= 0 &&\textrm{on } \Gamma,
\end{aligned}\right.
\end{align}
where $\eps(\Velvar) =  (\nabla u + (\nabla u)^ \trans)/2$. By introducing a new variable $\sigma = \nu \eps (u)$ where $\nu := 2 \tilde{\nu}$, equation \eqref{eq::stokes} can be reformulated to 
\begin{subequations}
  \label{eq::stokesintro_two}
  \begin{alignat}{2}
  \label{eq::stokesintro_two-a}
  \frac{1}{\nu}{\Dev\Stressvar} - \eps( \Velvar)& = 0 \quad && \textrm{in } \Omega,  \\
  \label{eq::stokesintro_two-b}
 \divergence(  \Stressvar) - \nabla \Presvar & = -\Forcevar \quad && \textrm{in } \Omega,  \\
 \label{eq::stokesintro_two-c} 
  \divergence (\Velvar) &=0 \quad&& \textrm{in } \Omega, \\
  \label{eq::stokesintro_two-d}
  \Velvar &= 0 \quad && \textrm{on } \Gamma.
 \end{alignat}
\end{subequations}
We shall call formulation~\eqref{eq::stokesintro_two} the {\em {\bf{m}}ass
{\bf{c}}onserving mixed formulation with symmetric {\bf{s}}tresses}, or simply the MCS formulation.
Although formulations~\eqref{eq::stokes}
and~\eqref{eq::stokesintro_two} are formally equivalent, the MCS
formulation~\eqref{eq::stokesintro_two} demands less regularity of the
velocity field $u$.  Many authors have studied this formulation
previously \cite{MR1934446,MR1464150,MR1231323,Farhloulcanadian},
including us~\cite{gopledschoe}. In~\cite{gopledschoe}, following the
others, we introduced a new variable $\sigma = \nu\nabla u$, which is in
general nonsymmetric, and considered an analogous formulation (which
was also called an MCS formulation).
The main novelty in~\cite{gopledschoe} was that $\sigma = \nu\nabla u$ was set in a new function space $H(\curl \divergence, \Omega)$ of matrix-valued functions whose divergence can continuously act on elements of $H_0(\divergence, \Omega)$. Accordingly, the appropriate velocity space there was $H_0(\divergence, \Omega),$ not $H_0^1(\Omega, \rr^2)$ as in the classical velocity-pressure formulation.

In contrast to~\cite{gopledschoe}, in this work we set $\sigma = \nu
\eps(u)$, not $\nu\nabla u$. Our goal is to apply what we learnt
in~\cite{gopledschoe} to produce a new method that provides a direct
approximation to the {\em symmetric} matrix function $\sigma = \nu
\eps(u)$. Being the viscous stress, this $\sigma$ is of more direct
practical importance (than $\nu\nabla u$).  We shall seek $\sigma$ in
the same function space $H(\curl \divergence, \Omega)$ that we
considered in~\cite{gopledschoe}.
We have shown in~\cite{gopledschoe}
that matrix-valued
finite element functions with ``normal-tangential'' continuity across
element interfaces are natural for approximationg solutions in 
$H(\curl \divergence, \Omega).$ We shall continue to use such finite
elements here.  It is interesting to note that 
in the HDG (hybrid discontinuous Galerkin) literature~\cite{MR3194122,
  fujinqiu} the potential importance of such normal-tangential
continuity was noted and arrived at through a completely different
approach.

The main point of departure in this work, stemming from that fact
that $H(\curl \divergence, \Omega)$
contains non-symmetric matrix-valued functions, is that we  impose the
symmetry of stress approximations weakly using Lagrange multipliers.
This technique of imposing symmetry weakly is widely used in finite
elements for linear elasticity~\cite{MR840802, MR2336264,
  MR2449101,MR1464150}. In particular, our analysis is inspired by the
early work of Stenberg \cite{Stenberg1988}, who
enriched the stress space by curls of local element bubbles. (In fact,
this idea was even used in a Stokes mixed
method~\cite{MR1934446}, but their resulting method is not pressure robust.)
These enrichment curls lie in the kernel of the divergence operator and are only ``seen'' by the weak-symmetry constraint allowing them to be used to prove discrete inf-sup stability.
While in two dimensions -- assuming a triangulation into simplices -- this technique only increases the local polynomial order by $1$, this is not the case in three dimensions. Years later~\cite{MR2629995,GopalGuzma12}, it was realized that it is possible to retain the good convergence properties of Stenberg's construction and yet reduce the enrichment space. Introducing a ``matrix bubble,'' these works added just enough extra curls needed to prove stability.


We shall see in later sections that the matrix bubble can also be used to enrich our discrete fluid stress space. This might seem astonishing at first.  Indeed, an enrichment space for fluid stresses must map well when using a specific map that is natural to ensure normal-tangential continuity of the discrete stress space.  Moreover, the enrichment functions must lie in the kernel of a realization of the distributional row-wise divergence used in MCS formulations (displayed in~\eqref{eq:10} below). It turns out that these properties are all fulfilled by an enrichment using a double curl involving matrix bubbles. Hence we are able to prove the discrete inf-sup condition.  Stability then follows in the same type of norms used in \cite{Stenberg1988} and is a key result of this work.

Some comments on the choice of the discrete velocity space and its implications are also in order here. As mentioned above, the velocity space within the MCS formulation is $V = H_0(\divergence, \Omega)$. One of the main features of the
first MCS method  \cite{gopledschoe}, as well the new version with weakly imposed symmetry of this paper, is that we can choose a discrete velocity space $V_h \subset V$ using $H(\divergence)$-conforming finite elements. Therefore, our method is tailored to approximate the incompressibility constraint exactly, leading to pointwise and exactly divergence-free discrete velocity fields. The use of  such $H(\divergence)$-conforming velocities in Stokes flow is by no means new:
for the standard velocity-pressure formulation, once can find it in~\cite{cockburn2005locally, cockburn2007note},
and for the Brinkman Problem  in~\cite{stenberg2011}. Therein, and also in the
more recent works of~\cite{LS_CMAME_2016,LedererSchoeberl2017}, the $H^1$-conformity is treated in a weak sense and a (hybrid) discontinuous Galerkin method is constructed. When employing $H(\divergence)$-conforming finite elements, one has the luxury of choice. In~\cite{gopledschoe}, we used the 
$\BDM^{k+1}$ space \cite{brezzi1985two} and added several local stress bubbles in order to guarantee stability. In contrast, in this paper, we have chosen to take the smaller Raviart-Thomas space \cite{RaviaThoma77} of order $k$, denoted by $\RT^k$. A similar choice was made also in the work of~\cite{fujinqiu}, where   they presented a hybrid method for solving the Brinkman problem based off the work of~\cite{MR3194122}. Our current  choice of the smaller space $\RT^k$ leads to a less accurate velocity approximation (compared to $\BDM^{k+1}$), so in order to retain the optimal convergence order of the velocity (measured in a discrete $H^1$-norm), we introduce a local element-wise post processing. Using the reconstruction operator of \cite{ledlehrschoe2017relaxedpartI,ledlehrschoe2018relaxedpartII} this post processing can be done retaining the exact divergence-free property.

The remainder of this paper is organized as follows. In Section 2, we
define notation for common spaces used throughout this work and
introduce an undiscretized formulation. Section 3 presents the MCS
method for Stokes flow including symmetric viscous stresses.  In
Section 4, we present the new discrete method including the
introduction of the matrix bubble. Section 5 proves a discrete inf-sup
condition and develops a complete a priori error analysis of the
discrete MCS system.  In Section 6, we introduce a postprocessing for
the discrete velocity. The concluding section (Section 7) reports
various numerical experiments we performed to illustrate the theory.

\section{Preliminaries}  \label{sec:preliminaries}

In this section, we introduce notation and present a
weak formulation for Stokes flow that includes symmetric viscous stresses.

Let $\DD(\om)$ or $\DD(\om, \RRR)$
denote the set of infinitely differentiable compactly supported real-valued functions on $\om$ and let $\DD^*(\om)$ denote the space of distributions. To differentiate between scalar, vector and matrix-valued functions on $\om$, we include the co-domain in the notation, e.g., $\DD(\om, \RRR^d) = \{ u: \om \to \RRR^d| \; u_i \in \DD(\om)\}$. Let $\mm$ denote the vector space of real $d \times d $ matrices. This notation scheme is similarly 
extended to other function spaces as needed. Thus, $L^2(\om) = L^2(\om, \RRR)$ denotes the space of square integrable $\rr$-valued functions on $\om$, while  analogous vector and matrix-valued function spaces are defined by
$
  L^2(\om, \rr^d) := \left\{ u : \om \to \rr^d \big| \;u_i \in L^2(\om)\right\}$ and 
$  L^2(\om, {\mm}) := \left\{ \sigma : \om \to {\mm} \big|\; \sigma_{ij} \in L^2(\om)\right\}$, respectively. Let $\kk$ denote the vector space of $d \times d$ skew symmetric matrices, i.e., $\kk = \skw(\mm)$, and  let 
$L^2(\om,\kk) := \left\{ \sigma : \om \to \kk \big| \;\sigma_{ij} \in L^2(\om)\right\}$.

Recall that the dimension $d$ in this work is either 2 or 3. Accordingly, depending on the context, certain differential operators have different meanings. The ``curl'' operator, depending on the context,  denotes one of  the differential operators below.
\begin{align*}
  \curl(\phi)
  & = (-\partial_2 \phi, \partial_1 \phi)^\trans, 
  && \text{ for } \phi \in \DD^*(\om, \RRR), d=2,
  \\
  \curl (\phi)
  & 
    = 
    (\d_2\phi_3 - \d_3 \phi_2, \d_3\phi_1 - \d_1\phi_3, 
    \d_1\phi_2 - \d_2\phi_1)^\trans
  && \text{ for } \phi \in \DD^*(\om, \RRR^3), d=3,     
\end{align*}
where $(\cdot)^\trans$ denotes the transpose and $\d_i$ abbreviates
$\d/\d x_i.$ For matrix-valued functions in both  $d=2$ and $3$ cases, i.e., 
$\phi \in \DD^*(\om, \mm),$
by $\curl(\phi)$ we mean the matrix obtained by taking $\curl$ row wise. Unfortunately, this still does not exhaust all the curl cases. In the $d=2$ case, 
there are two possible definitions of $\curl(\phi)$
for  $\phi \in \DD^*(\om, \RRR^2)$,
\begin{align}
  \label{eq:curl-vec2scal-2d}
  \curl( \phi)
  & = -\partial_2 \phi_1+ \partial_1 \phi_2,
  \qquad  \text{ or }
  \\
  \label{eq:curl-vec2mat-2d}
  \curl (\phi)
  & 
    =
    \begin{pmatrix}
      \d_2 \phi_1  & -\d_1 \phi_1 \\
      \d_2 \phi_2  & -\d_1 \phi_2
    \end{pmatrix},  
\end{align}
and we shall have occasion to use both. The latter will not be used
until~\eqref{eq:deltaSigma} below, so until then, the reader may
continue assuming we mean~\eqref{eq:curl-vec2scal-2d} whenever we
consider curl of vector functions in $\rr^2$. The
operator $\grad$ is to be understood from context as an operator that
results in either a vector whose components are
$[\grad \phi]_i = \d_i \phi$ for $\phi \in \DD^*(\om, \RRR),$ or a
matrix whose entries are $[\grad \phi]_{ij} = \d_j \phi_i$ for
$\phi \in \DD^*(\om, \RRR^d),$ or a third-order tensor whose entries
are
$[\grad \phi]_{ijk} = \d_k \phi_{ij}$ for
$\phi \in \DD^*(\om, \kk).$
Finally, in a similar manner, we
understand $\div(\phi)$ as either $\sum_{i=1}^d \d_i \phi_i$ for
vector-valued $\phi \in \DD^*(\om, \RRR^d),$ or the row-wise
divergence $\sum_{j=1}^d \d_j \phi_{ij}$ for matrix-valued
$\phi \in \DD(\om,\mm)^*$.

Let $\dt= d(d-1)/2$ (so that $\dt=1$ and $3$ for $d=2$ and 3, respectively).
In addition to the standard Sobolev space $H^m(\om)$ for any $m\in \RRR,$
we shall use
the well-known space
$  \vecb{H}(\divergence,\Omega)
= \{ \Velvar \in L^2(\Omega, \rr^d): \divergence(\Velvar) \in
    L^2(\Omega) \}.$
By its trace theorem,
$H_0(\div,\om) = \{ u \in H(\div, \om): u \cdot n|_\Gamma =0\}$ is a well-defined closed subspace, where $n$ denotes the outward unit normal on $\Gamma$.
Its dual space $[H_0(\divergence,\Omega)]^*$, as proved
in \cite[Theorem 2.1]{gopledschoe}, satisfies  
\begin{align}
  \label{eq:H0div-Hcurl}
[H_0(\divergence,\Omega)]^* = H^{-1}(\curl, \Omega) = \{ \vecb{\phi} \in H^{-1}(\Omega, \rr^d): \curl(\vecb{\phi}) \in H^{-1}(\Omega, \rr^{\dt})\}.
\end{align}
In this work, the following space is important:
\begin{align*}
\Hcurldiv{\Omega} 
  & := \{ \Stressvar \in L^2(\Omega, {\mm}):
    \divergence (\Stressvar) \in [H_0(\divergence,\Omega)]^* \},
\end{align*}
where the name results from~\eqref{eq:H0div-Hcurl}: indeed
 a function $\sigma \in \Hcurldiv{\Omega}$ fulfills $\curl\divergence(\sigma) \in H^{-1}(\Omega, \rr^{\dt})$.

Next, let us derive a variational formulation of the system \eqref{eq::stokesintro_two}, which is based on the mixed stress formulation (MCS) introduced in chapter 3 in the work \cite{gopledschoe}. The method is based on a weaker regularity assumption of the velocity as compared to the standard velocity-pressure formulation \eqref{eq::stokes}. The velocity $u$ and the pressure $p$ now belong, respectively,  to the spaces
\begin{align*}
  \Velspace &:= \vecb{H}_0(\divergence,\Omega),
  &&&
  \Presspace &:= L^2_0(\Omega):=\{ \Presvartest \in L^2(\Omega): \; \int_\Omega \Presvartest \dx =0 \}.
\end{align*}
Multiplying \eqref{eq::stokesintro_two-c} with a pressure test function $q \in Q$ and integrating over the domain $\Omega$ ends up in the familiar equation
$
(\divergence(u), q)_{L^2(\om)} =0,
$
which we write as the last equation of the final Stokes  system~\eqref{eq::mixedstressstokesweak} written below.
Here and throughout,  the inner product of a space $X$ is denoted by
$(\cdot, \cdot)_X$. When $X$ is the space of functions whose
components are square integrable functions on $\om$, we abbreviate $(
\cdot, \cdot)_X$ to simply $(\cdot, \cdot)$, as done
in~\eqref{eq::mixedstressstokesweak} below.
Similarly, while we  generally denote the norm and seminorm on a Sobolev space
$X$ by $\| \cdot \|_X$ and $|\cdot|_X$, respectively, 
to simplify notation, we set 
$\| f \|^2_D := (f, f)_D$,
where $(f, g)_D$ denotes
$L^2(D, \mathbb{V})$ inner product
for any
$\mathbb{V} \in \{ \rr, \rr^d, \kk, \mm\}$ and any 
subset $D \subseteq \Omega$. Moreover, when $D = \om$, we omit the
subscript and simply write $\| f \|$ for $\| f\|$.

To motivate the remaining equations of~\eqref{eq::mixedstressstokesweak}, let
the deviatoric part of a matrix $\sigma$ be defined by
$
  \Dev{\sigma} := \sigma - d^{-1}\trace{\sigma} \id,
$
where $\id$ denotes the identity matrix and $\trace{\sigma} := \sum_{i=1}^d \sigma_{ii}$ denotes the matrix trace. Since
$\nu^{-1} \sigma= \eps(u)$,
due to the incompressibility constraint $\divergence(u) = 0,$ we have the identity
\begin{align} \label{eq::devsigma}
\Dev{\nu^{-1} \Stressvar} =  \Dev{ \eps(\Velvar)} =  \eps(\Velvar) - \frac{\visc}{d} \trace{\eps(\Velvar)} \id = \eps(\Velvar) - \frac{1}{d} \divergence(\Velvar) \id =  \eps(\Velvar).
\end{align}
Since $\trace{\sigma}=0$ and $\sigma=\sigma^\trans$,
we define the stress space as the following
closed subspace of  $\Hcurldiv{\Omega}$: 
\begin{align*}
\Sigma^{\operatorname{sym}} := \{ \tau \in H(\curl \divergence, \Omega): \trace{\tau} = 0, \;\tau = \tau^\trans\}.
\end{align*}
Testing equations \eqref{eq::stokesintro_two-a} with a test functions $\tau \in \Sigma^{\operatorname{sym}}$ and integrating over the domain, we have for the term including $\eps(u)$ the identity
\begin{align*}
  \int_\Omega \eps(u) : \tau \dx &= \frac{1}{2} \int_\Omega \nabla u : \tau \dx + \frac{1}{2} \int_\Omega (\nabla u)^\trans : \tau \dx \\
  &= \frac{1}{2} \int_\Omega \nabla u : \tau \dx + \frac{1}{2} \int_\Omega \nabla u : \tau \dx = \int_\Omega \nabla u : \tau \dx.
\end{align*}
Using the knowledge that the velocity $u$ should be in $H_0^1(\om),$ we obtain
\begin{align*}
(\nu^{-1} \Dev{\Stressvar}, \Dev{\Stressvartest} ) 
    + 
    \ip{ \divergence(\Stressvartest),  \Velvar}_{H_0(\div, \om)}
    & = 0, 
\end{align*}
which is the first equation in the system~\eqref{eq::mixedstressstokesweak} below.  Here and throughout, when working with elements $f$ of the dual space $X^*$ of a topological space $X$, we denote the action of $f$ on an element $x \in X$ by $\ip{ f, x}_X$, where we may omit the subscript $X$ when its obvious from context. 
Finally we also test \eqref{eq::stokesintro_two-b} with $v \in V$ and integrate
the pressure term by parts. This results in the remaining equation of~\eqref{eq::mixedstressstokesweak}.

{\em Summarizing,} the weak problem is to find $(\sigma, u, p) \in \Sigma^{\operatorname{sym}} \times V  \times Q$ such that
\begin{align} \label{eq::mixedstressstokesweak}
  \left\{
  \begin{aligned}
    (\nu^{-1} \Dev{\Stressvar}, \Dev{\Stressvartest} ) 
    + 
    \ip{ \divergence(\Stressvartest),  \Velvar}_{H_0(\div, \om)} 
    & = 0 
    &&\text{ for all } \Stressvartest \in \Sigma^{\operatorname{sym}}, 
    \\
    \ip{\divergence(\Stressvar), \Velvartest}_{H_0(\div, \om)}
    +
    (\divergence(\Velvartest), \Presvar) 
    & = 
    -(\Forcevar, \Velvartest) 
    &&\text{ for all } \Velvartest \in \Velspace,    
    \\
    (\divergence(\Velvar), \Presvartest) &=0 
    &&\text{ for all } \Presvar \in \Presspace.
\end{aligned}
\right.                                                         
\end{align}
In the ensuing section, we shall focus on a discrete analysis of a nonconforming scheme based on \eqref{eq::mixedstressstokesweak}. Although wellposedness of \eqref{eq::mixedstressstokesweak} is an interesting question, we shall not comment further on it here since it is of no direct use in a nonconforming analysis.

\section{The new method}

In \cite{gopledschoe}, we introduced an MCS method where $\sigma$ was an approximation to (the generally non-symmetric) $\nu \grad u$ instead of (the symmetric) $\nu \eps(u)$ considered above. Since there was no symmetry requirement in \cite{gopledschoe}, there we worked with the space $\Sigma := \{ \tau \in H(\curl \divergence, \Omega): \trace{\tau} = 0\}$ instead of $\Sigma^{\operatorname{sym}}$. The finite element space for $\Sigma$ designed there can be reutilized
in the current symmetric case
(with some modifications),
once we reformulate the symmetry requirement as a constraint in a weak form.



To do so, we need further notation. Let $\kappa: \rr^{\dt} \to \kk$ be defined by 
\begin{equation}
  \label{eq:kappa}
  \kappa(v) =
  \frac 1 2 
  \begin{pmatrix}
    0 & -v \\
    v & 0         
  \end{pmatrix}
  \; \text{ if } d=2,
  \qquad\qquad
  \kappa(v) =
  \frac 1 2 
  \begin{pmatrix}
    0 & -v_3 & v_2 \\
    v_3 & 0 & -v_1 \\
    -v_2 & v_1 & 0
  \end{pmatrix}
  \;\text{ if } d =3.  
\end{equation}
When $u$ represents the Stokes velocity, $\omega = \kappa(\curl(u))$ represents the {\em vorticity}. Since $\nabla u = \eps(u) + \omega$, introducing $\omega$ as a new variable, and the symmetry condition $\sigma - \sigma^\trans = 0$ as a new constraint, we obtain the boundary value problem 
\begin{subequations}
  \label{eq::mixedstressstokes}
  \begin{alignat}{2}
  \label{eq::mixedstressstokes-a}
  \frac{1}{\nu}{\Dev\Stressvar} - \nabla \Velvar + \omega & = 0 \quad && \textrm{in } \Omega,  \\
  \label{eq::mixedstressstokes-b}
 \divergence(  \Stressvar) - \nabla \Presvar & = -\Forcevar \quad && \textrm{in } \Omega,  \\
 \label{eq::mixedstressstokes-c} 
 \Stressvar - \Stressvar^\trans & = 0 \quad && \textrm{in } \Omega,  \\
  \label{eq::mixedstressstokes-d} 
  \divergence (\Velvar) &=0 \quad&& \textrm{in } \Omega, \\
  \label{eq::mixedstressstokes-e}
  \Velvar &= 0 \quad && \textrm{on } \Gamma.
 \end{alignat}
\end{subequations}
In the remainder of this section, we introduce a discrete formulation approximating~\eqref{eq::mixedstressstokes}.

The method will be described
on a subdivision (triangulation) $\mathcal{T}_h$ of $\Omega$
consisting of triangles in two dimensions and
tetrahedra in three dimensions.
For the analysis later, we shall assume that the $\mesh$ is quasiuniform.
By $h$ we denote the maximum of the diameters of all elements $T \in
\mathcal{T}_h$. Quasiuniformity implies that 
$  h \sim \operatorname{diam}(T)
$
for all mesh elements $T$.
Here and throughout, by $A \sim B$ we indicate that there exist two constants $c,C >0$ independent of the mesh size $h$ as well as the viscosity $\nu$ such
$cA \le B \le cA$. Similarly, we use the notation $A \lesssim B$ if there exists a constant $C \neq C(h,\nu)$ such that $A \le CB$.
All element interfaces and element boundaries on $\Gamma$ are called facets and are collected into a set
$\mathcal{F}_h$. This set is partitioned into facets on the boundary
$\facetsext$ and interior facets $ \facetsint$.  On each facet we
denote by $\jump{\cdot}$ the standard jump operator. On a boundary
facet the jump operator is just  the identity.  On all facets we
denote by $n$ a unit normal vector. When integrating over boundaries
of $d$-dimensional domains, the orientation of $n$ is assumed to be
outward. On
a facet with normal $n$ adjacent to 
an mesh element $T$,
the normal and tangential traces
of a smooth function $\phi: T \rightarrow \rr^d$
are defined by
$
\phi_n := \phi \cdot n
$ and
$\phi_t = \phi - \phi_n n,$ respectively.
Similarly, for a smooth $\psi: T \rightarrow {\mm}$, the (scalar-valued) ``normal-normal'' and the (vector-valued)  ``normal-tangential'' components are defined by 
$
  \psi_{nn} = \psi : (n \otimes n) = n^{\trans} \psi n
  $ and $\psi_{nt} = \psi n - \psi_{nn} n,$ respectively.

For any integers $m, k\ge 0$, the following ``broken spaces''  are viewed as
consisting of functions on $\om$ without any continuity constraints
across element interfaces:
\begin{align*}
  H^m(\mesh) 
  := \prod_{T \in \mesh} H^m(T),
  \qquad
  \Poly^k(\mesh) :=  \prod_{T \in \mesh} \Poly^k(T).
\end{align*}
For $D \subset \Omega$ we use the notation $(\cdot,\cdot)_{D}$ for the inner product of $L^2(D)$ or its vector and tensor analogues such as $L^2(D, \rr^d), L^2(D, \mm), L^2(D, \kk).$ Also 
let $\| \cdot \|^2_D = (\cdot ,\cdot)_D$. Next for each element $T \in \mesh$ let $\Poly^k(T)\equiv \Poly^k(T, \rr)$
denote the set of  polynomials of degree at most
$k$ on $T$. The vector and tensor analogues 
such as $
\Poly^k(T, \rr^d), \Poly^k(T, {\mm}),
\Poly^k(T, {\kk})$ have their components in
$\Poly^k(T)$.
The broken spaces $\Poly^k(\mesh, \rr^d), \Poly^k(\mesh, {\mm}),$ and
$\Poly^k(\mesh, {\kk})$ are
defined similarly. We shall also use
the conforming Raviart-Thomas space (see \cite{brezzi2012mixed, MR0483555}), 
$  \RT^k := \{ u_h \in H(\divergence, \om): u_h|_T \in \Poly^k(T, \rr^d) + x  {\Poly}^k(T, \rr) \textrm{ for all } T \in \mesh \}.$

\subsection{Velocity, pressure, and vorticity spaces}

For any $k \ge 1$, our method uses 
\begin{align*}
  V_h := V \cap \RT^k, \qquad
  Q_h := Q \cap \Poly^k(\mesh), \qquad
  W_h := \Poly^k(\mesh, \kk),
\end{align*}
for approximating the velocity, pressure, and vorticity, respectively.

Standard finite element mappings apply for these spaces. 
Let $\hat{T}$ be the unit simplex (for $d=2$ and $3$), which
we shall refer to as the {\em reference element}, and let 
$T \in \mesh$. Let $\phi: \hat{T} \rightarrow T$ be an affine
homeomorphism and set $F:= \phi'$. By quasiuniformity,
$ \| F\|_{\ell^\infty} \sim h,$
$ \| F^{-1}\|_{\ell^\infty} \sim h^{-1}, $ and $|\det{(F)}| \sim h^d,$
estimates that we shall use tacitly in our scaling arguments later.
Such arguments proceed by mapping functions on $\hat T$ to and from
$\hat T$. Given a scalar-valued $\hat{q}_h$,
a vector-valued $\hat{v_h}$,
and a skew-symmetric matrix-valued $\hat{\eta}_h$ on the reference
element $\hat T$, we map them to $T$ using
\begin{equation}
  \label{eq:map_PW}
  \mathcal{Q}(q_h) = \hat{q}_h \circ \phi^{-1},
  \quad
  \piola(\hat{v}_h):= \det(F)^{-1} F (\hat{v}_h\circ \phi^{-1}),
  \quad
  \mathcal{W}(\hat{\eta}_h) := F^{-\trans} (\hat{\eta}_h\circ \phi^{-1}) F^{-1},  
\end{equation}
respectively, i.e., these are our mappings for functions in the
pressure, velocity, and vorticity spaces, respectively. The first is
the inverse of the standard pullback, the second is the standard
Piola map,  and the third is designed to preserve skew symmetry.

\subsection{Stress space}

The definition of our stress space is motivated by 
the following result, proved in~\cite[Section 4]{gopledschoe}.

\begin{theorem} \label{th::normtangcont}
  Suppose  $\Stressvar$ is in $H^1(\mesh, {\mm})$ and
  $\Stressvar_{\normal\normal}|_{\d T} \in H^{1/2}(\partial T)$ for
  all elements $T \in \mesh$. Assume that the normal-tangential
  trace $\Stressvar_{nt}$ is continuous across element
  interfaces. Then $\sigma$ is in $\Hcurldiv{\Omega}$ and moreover
  \begin{equation} 
    \label{eq:10}
    \ip{ \div(\sigma), v}_{H_0(\div, \om)}
    = 
    \sum_{T \in \mesh}
    \left[
      (\div(\sigma), v)_T - \ip{ v_n, \sigma_{nn}}_{H^{1/2}(\d T)}
    \right]
  \end{equation}
  for all $v \in H_0(\div, \om).$
\end{theorem}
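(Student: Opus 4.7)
The plan is to define the right-hand side of \eqref{eq:10} as a candidate functional
\[
\Phi(v) := \sum_{T \in \mesh}\left[ (\div\sigma, v)_T - \ip{v_n, \sigma_{nn}}_{H^{1/2}(\d T)} \right]
\]
on $H_0(\div, \om)$, to verify that $\Phi$ is a bounded linear functional, and then to show that its action on the dense subspace $\DD(\om, \rr^d)$ coincides with the distributional divergence of $\sigma$.  These two facts together yield both conclusions: $\div \sigma \in [H_0(\div, \om)]^*$, so $\sigma \in \Hcurldiv{\om}$, and the identity \eqref{eq:10} on all of $H_0(\div, \om)$.

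First I would check boundedness of $\Phi$ on $H_0(\div,\om)$.  The volume contribution is controlled by $\|\div\sigma\|_{L^2(\mesh)}\|v\|_{L^2(\om)}$, which is finite since $\sigma \in H^1(\mesh,\mm)$ yields $\div\sigma \in L^2(\mesh,\rr^d)$ elementwise.  For the facet contribution, the hypothesis $\sigma_{nn}|_{\d T} \in H^{1/2}(\d T)$ combines with the standard $H(\div)$-trace estimate $\|v_n\|_{H^{-1/2}(\d T)} \lesssim \|v\|_{H(\div,T)}$ to produce $|\ip{v_n,\sigma_{nn}}_{H^{1/2}(\d T)}| \lesssim \|\sigma_{nn}\|_{H^{1/2}(\d T)}\|v\|_{H(\div,T)}$; summing over $T \in \mesh$ and applying Cauchy--Schwarz gives a $\sigma$-dependent constant times $\|v\|_{H(\div,\om)}$.

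Second I would verify $\Phi(v) = -(\sigma, \nabla v)_\om$ for $v \in \DD(\om, \rr^d)$, which is precisely the distributional action of $\div\sigma$ on $v$.  Elementwise integration by parts gives
\[
-(\sigma, \nabla v)_\om \;=\; \sum_{T \in \mesh}\left[(\div\sigma, v)_T - \int_{\d T}(\sigma n)\cdot v \,ds\right].
\]
On an interior facet $F = \d T_1 \cap \d T_2$, orienting the unit normal $n$ from $T_1$ toward $T_2$, the two element boundary contributions combine into $\int_F \jump{\sigma n}\cdot v\,ds$.  Writing $\sigma n = \sigma_{nn} n + \sigma_{nt}$ and invoking the hypothesis $\jump{\sigma_{nt}}=0$ gives $\jump{\sigma n}\cdot v = \jump{\sigma_{nn}}\, v_n$, so only the normal--normal component survives and couples only to $v_n$.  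Exterior facet contributions vanish because $v$ has compact support.  Reassembling back into element boundary integrals, and observing that for smooth $v$ the Lebesgue integral $\int_{\d T} \sigma_{nn} v_n \,ds$ coincides with the $H^{1/2}(\d T)$ duality pairing $\ip{v_n,\sigma_{nn}}_{H^{1/2}(\d T)}$, I arrive at $\Phi(v) = -(\sigma, \nabla v)_\om$.

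Third, the density of $\DD(\om, \rr^d)$ in $H_0(\div,\om)$ combined with the bound from step one shows that $\div\sigma$, initially only a distribution, extends uniquely to $\Phi \in [H_0(\div,\om)]^*$, which is exactly the content of \eqref{eq:10}.  The main obstacle is step two: the bookkeeping around the normal orientations when combining adjacent element contributions, and in particular confirming that the single-valuedness of $\sigma_{nt}$ eliminates the tangential jump regardless of the arbitrary orientation chosen on each facet.  The remainder is routine trace theory.
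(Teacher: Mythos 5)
Your argument is correct and follows essentially the same route as the proof the paper cites from \cite[Section 4]{gopledschoe}: elementwise integration by parts against smooth test fields, cancellation of the tangential facet terms via the single-valuedness of $\sigma_{nt}$ (with the correct sign bookkeeping under normal reversal), boundedness of the resulting functional via the $H^{-1/2}(\d T)$--$H^{1/2}(\d T)$ trace duality, and density of $\DD(\om,\rr^d)$ in $H_0(\div,\om)$ to identify it with the distributional divergence. No gaps.
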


Clearly, matrix finite element subspaces having normal-tangential continuity are suggested by Theorem~\ref{th::normtangcont}. Technically, the theorem's sufficient conditions for full conformity  also include  the condition  $\Stressvar_{\normal\normal}|_{\d T} \in H^{1/2}(\partial T).$ This condition is very restrictive as it would enforce continuity at vertices and edges in two and three dimensions respectively. If this constraint is relaxed, much simpler, albeit nonconforming, elements can be constructed. This was the approach we adopted in
\cite{gopledschoe}. We continue in the same vein here
and define the nonconforming stress space 
\begin{align}
  \label{eq:Sigma_h}
\Stressspaceh &:= \{ \tau_h \in\Poly^k(\mesh, {\mm}):\;\trace{\tau_h} = 0,\;
                  \jump{(\tau_h)_{\normal\tangential}} =0 \text{ for all } F \in \mathcal{F}_h^{\textrm{int}}\}.
\end{align}
As mentioned in the introduction, we must enrich the above stress
space $\Stressspaceh$ to guarantee solvability of the resulting
discrete system due to the additional weak symmetry constraints. We
follow the approach of \cite{Stenberg1988} and its later
improvements~\cite{MR2629995,GopalGuzma12} to construct the needed
enrichment space.

Define a cubic matrix-valued ``bubble'' function as follows. On a
$d$-simplex $T$ with vertices $a_0, \ldots, a_d$, let $F_i$ denote the
face opposite to $a_i$, and let $\lambda_i$ denote the unique linear
function that vanishes on $F_i$ and equals one on $a_i$, i.e., the
$i$th barycentric coordinate of $T$. Following
\cite{MR2629995,GopalGuzma12}, we define $B \in \Poly^3(T, \mm)$ by
\begin{subequations}
  \label{eq::matrixbubble}
  \begin{align}
    \label{eq::matrixbubble:d3}
    B
    & =
      \sum\limits_{i=0}^3 \,\lambda_{i-3}\lambda_{i-2}\lambda_{i-1}\;
      \nabla \lambda_{i} \otimes \nabla \lambda_{i}
    && \text{ if } d=3,
    \\     \label{eq::matrixbubble:d2}
    B
    & = \lambda_0\lambda_1\lambda_2
    && \text{ if } d=2,
  \end{align}
\end{subequations}
where the indices on the barycentric coordinates are calculated mod~$4$ in
\eqref{eq::matrixbubble:d3}.
Let $\Poly_{\perp}^k(T, \vv)$ denote the $L^2$-orthogonal complement
of   $\Poly^{k-1}(T, \vv)$ in $\Poly^k(T, \vv)$ for  $\vv \in \{\rr,
\kk\}$, and let $\Poly_{\perp}^k(\mesh, \vv)= \prod_{T \in \mesh}
\Poly_{\perp}^k(T, \vv).$ For any $k\ge 1$, define 
\begin{align}
  \label{eq:deltaSigma}
  \delta \Sigma_h := \left\{\Dev{\curl (\curl(r_h) B)} : \;
  r_h \in \Poly_{ \perp}^k(\mesh, \kk)\right\},
\end{align}
for  $d=2$ and $3$, with the understanding that in $d=2$ case, the outer curl is defined by~\eqref{eq:curl-vec2mat-2d}, not~\eqref{eq:curl-vec2scal-2d}.
The total stress space is given by
\begin{align*}
  \Sigma_h^{+} := \Sigma_h \oplus \delta \Sigma_h, \qquad k\ge 1.
\end{align*}
That functions in this space have normal-tangential continuity is a consequence of the following property proved
in~\cite[Lemma~2.3]{MR2629995}.

\begin{lemma} \label{lem::bubbletransform}
   Let $q \in \mm$ and $T \in \mesh$.
   The products $qB$ and $Bq$ have vanishing tangential trace on $\partial T$,
  so  the function $\curl(q B)$ has vanishing normal trace on $\partial T$.
\end{lemma}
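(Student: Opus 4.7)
My proof plan proceeds by separating the two dimensions and then exploiting a rank-one structure on each face. The two-dimensional case is immediate: $B = \lambda_0\lambda_1\lambda_2$ is a scalar cubic bubble vanishing on every edge of $T$, so $qB$ and $Bq$ vanish identically on $\partial T$ and the lemma is trivial. All the substance is in $d=3$, and my plan there is to analyze $B$ face-by-face.

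Fix a face $F_j$ of $T$ with outward unit normal $n$. Reading the indexing mod $4$ in~\eqref{eq::matrixbubble:d3}, the triple $\{i-3,i-2,i-1\}$ is exactly the three indices different from $i$, so the coefficient $\lambda_{i-3}\lambda_{i-2}\lambda_{i-1}$ is the product of the three barycentric coordinates other than $\lambda_i$. This vanishes on $F_j$ whenever $j\neq i$, so only the $i=j$ term in the sum survives on $F_j$. Since $\lambda_j\equiv 0$ on $F_j$, its gradient $\nabla\lambda_j$ is parallel to $n$, and one obtains the rank-one reduction
\[
  B|_{F_j} \;=\; c_j\, n\, n^{\trans},
\]
with $c_j = (\lambda_{j-3}\lambda_{j-2}\lambda_{j-1})|_{F_j}\,|\nabla\lambda_j|^2 \ge 0$. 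With this outer-product form in hand, for any tangent vector $t$ to $F_j$ one computes
\[
  (qB)\,t\big|_{F_j} \;=\; c_j\,(qn)\,(n\cdot t) \;=\; 0,
  \qquad
  (Bq)\,t\big|_{F_j} \;=\; c_j\,(n^{\trans}qt)\,n,
\]
which are zero and parallel to $n$, respectively, so in both cases the tangential part vanishes. Ranging over the faces of $T$, this yields the claimed vanishing tangential traces of $qB$ and $Bq$ on $\partial T$.

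For the final assertion I would note that the same rank-one reduction shows that each row of $qB$ on $F_j$ is a scalar multiple of the row vector $n^{\trans}$, so the tangential component of every row of $qB$ vanishes pointwise on $\partial T$. The standard surface identity $\curl(v)\cdot n = \operatorname{curl}_{\partial T}(v_t)$ for smooth $v$, applied row by row, then forces the normal component of the row-wise curl to vanish on $\partial T$, which is exactly the statement that $\curl(qB)$ has vanishing normal trace. The only nontrivial step in this plan is the combinatorial reduction of the sum defining $B$ on each face; once the rank-one form $c_j\, n\, n^{\trans}$ is recognized, every subsequent claim is mechanical.
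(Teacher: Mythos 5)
Your proof is correct. Note that the paper itself gives no proof of this lemma --- it is quoted from \cite[Lemma~2.3]{MR2629995} --- and your argument is precisely the standard one from that reference: the whole content is the face restriction $B|_{F_j}=c_j\, n n^{\trans}$ with $c_j\ge 0$, which you derive correctly from the mod-$4$ indexing (only the $i=j$ term survives on $F_j$, and $\nabla\lambda_j\parallel n$), and the $d=2$ case is indeed trivial since the scalar bubble vanishes on $\partial T$. Your deduction of the normal trace of $\curl(qB)$ is also the right one: each row of $qB|_{F_j}=c_j\,qnn^{\trans}$ is a multiple of $n^{\trans}$, so the row-wise surface-curl identity kills $\curl(qB)\,n$; this is exactly what is needed in Lemma~\ref{lem::deltaspacenormtang} and in the integrations by parts of Lemma~\ref{lem::lbbone}. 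One small caution: for $Bq|_{F_j}=c_j\,nn^{\trans}q$ the rows are multiples of $n^{\trans}q$, not of $n^{\trans}$, so ``vanishing tangential trace'' holds for $Bq$ only in the weaker sense you state (the image of every tangent vector is parallel to $n$, i.e., the tangential--tangential block vanishes); since the rest of the paper only ever uses the $qB$ half, this ambiguity is harmless, but it is worth being explicit about which notion of tangential trace you mean for each of the two products.
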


\begin{lemma}\label{lem::deltaspacenormtang}
  Any $\sigma \in \delta \Sigma_h$ has 
  vanishing $\sigma_{nt}$ and $\jump{ \sigma_{nt}} $ on all facets $F \in \facets.$
\end{lemma}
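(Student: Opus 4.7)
The plan is to argue elementwise. Any $\sigma \in \delta \Sigma_h$ is, on each $T \in \mesh$, of the form $\sigma|_T = \Dev{\curl(q_T B)}$ with $q_T := \curl(r_h|_T)$ a polynomial on $T$ (recall that $r_h$ lives in a broken space) and $B$ the matrix bubble of \eqref{eq::matrixbubble}. The goal is to show that $\sigma_{nt}$ vanishes on $\d T$; the jump statement then follows immediately since on any $F \in \facetsint$ both one-sided traces vanish and on any $F \in \facetsext$ the jump is just the one-sided trace.

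The first step is to observe that taking the deviator leaves the normal-tangential component of the traction unchanged. For any smooth matrix-valued $M$ and unit normal $n$,
\[
(\Dev M)n = Mn - \frac{1}{d}(\trace M)\, n, \qquad (\Dev M)_{nn} = M_{nn} - \frac{1}{d}\trace M,
\]
so that $(\Dev M)_{nt} = (\Dev M)n - (\Dev M)_{nn}\, n = Mn - M_{nn}\, n = M_{nt}$. Hence it suffices to prove that $(\curl(q_T B))_{nt}$ vanishes on each face of $T$, which is implied by the stronger statement $(\curl(q_T B))\, n = 0$ on $\d T$.

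The second step is to invoke Lemma~\ref{lem::bubbletransform}, which gives exactly this vanishing of the normal trace of $\curl(qB)$ when $q\in\mm$. Strictly speaking the lemma is stated for constant $q$, whereas $q_T$ is polynomial, so a mild extension is needed. This extension is pointwise in nature and transparent from the explicit form of $B$: on a face $F$ of $T$ opposite the vertex $a_j$, every term in the sum \eqref{eq::matrixbubble:d3} except the $i=j$ term contains $\lambda_j$ as a factor and therefore vanishes, leaving $B|_F = \lambda_{j-3}\lambda_{j-2}\lambda_{j-1}\,\grad\lambda_j\otimes\grad\lambda_j$, a scalar function on $F$ times $n\otimes n$ (since $\grad\lambda_j\parallel n$ on $F$); in the $d=2$ case one has $B|_F = 0$ outright. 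In either case $q_T B|_F$ has every row aligned with $n$, so the same argument that proves Lemma~\ref{lem::bubbletransform} for constant $q$ goes through verbatim with $q$ replaced by $q_T$, yielding $(\curl(q_T B))\, n|_{\d T}=0$ and therefore $\sigma_{nt}|_{\d T}=0$.

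The main obstacle I expect is precisely this extension of Lemma~\ref{lem::bubbletransform} from constant to polynomial $q$; once the pointwise structure of $B$ on $\d T$ is noted, everything else is bookkeeping. The rest of the argument (the $\Dev$ computation, and the facet-jump conclusion) is routine.
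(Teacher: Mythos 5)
Your proof is correct and follows the same route as the paper: the paper's proof is the one-line observation that $(\Dev{\sigma})_{nt} = \sigma_{nt}$ together with a direct appeal to Lemma~\ref{lem::bubbletransform}. Your additional care in extending that lemma from constant $q\in\mm$ to the polynomial $q_T=\curl(r_h|_T)$ --- via the pointwise structure $B|_F \parallel n\otimes n$, which makes every row of $q_T B$ parallel to $n$ on $\d T$ so the surface-curl argument still gives $(\curl(q_T B))\,n|_{\d T}=0$ --- is a legitimate detail that the paper leaves implicit, and it is handled correctly.
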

\begin{proof}
  Since $(\Dev{\sigma})_{nt} = \sigma_{nt}$, this is a direct consequence of Lemma~\ref{lem::bubbletransform}.
\end{proof}

We also need a proper mapping
for functions in $\Sigma_h^+$ that preserves normal-tangential
continuity. We shall continue to use the 
following map,  first introduced
in~\cite{gopledschoe}:
\begin{align}
  \label{eq:map_M}
  \mathcal{M}(\hat{\sigma}_h) := \frac{1}{\det(F)} F^{-\trans}
  (\hat{\sigma}_h\circ \phi^{-1}) F^\trans.
\end{align}
As shown in \cite[Lemma 5.3]{gopledschoe}, on each facet, 
$  (\mathcal{M}(\hat{\sigma}_h))_{nt}$ is a scalar multiple of $(\hat{\sigma}_h)_{nt}$
and 
$\trace{\hat{\sigma}_h} = 0$ if and only
if $\trace{\mathcal{M}(\hat{\sigma}_h)} = 0.$ Degrees of freedom are
discussed in \S\ref{ssec:dof}.

\begin{remark}
  \label{rem:implB}
  Note that in~\eqref{eq::matrixbubble},
  $B$ was given using barycentric coordinates as
  an expression that holds on any
  simplex. Let  $\hat{B}$ denote the function on  the reference
  element $\hat T$ obtained by 
  replacing $\lambda_i$ by
  reference element barycentric coordinates $\hat \lambda_i$.
  Considering the obvious  map that
  transforms $\hat\nabla\hat \lambda_i \otimes
  \hat\nabla\hat \lambda_i$ to
  $ \nabla\lambda_i \otimes
  \nabla\lambda_i$, we find that the matrix bubble $B$ on any simplex is given by 
  \begin{align} \label{eq:bubbletrafo}
    B := F^{-\trans } (\hat{B} \circ \phi^{-1}) F^{-1}. 
  \end{align}
\end{remark}

\subsection{Equations of the method}

For the derivation of the discrete variational formulation we turn our attention back to the weak formulation \eqref{eq::mixedstressstokesweak} and identify these forms:
\begin{align*}
  & \ablf: L^2(\om, \mm) \times L^2(\om, \mm)  \rightarrow \rr,
  &&  \blfone: \Velspace \times \Presspace \rightarrow \rr,
  \\
  &\ablf(\Stressvar, \Stressvartest) := 
(\nu^{-1}     \Dev{\Stressvar}, \Dev{\Stressvartest}),    
  &&\blfone(\Velvar, \Presvar) := 
     (\divergence(\Velvar), \Presvar).
\end{align*}
The definition of the remaining bilinear form is motivated by the definition of the ``distributional divergence'' given by \eqref{eq:10}. To this end we define
$b_2 :
\{ \tau \in H^1(\mesh, \mm):
\jump{\tau_{nt}} =0\}  \times \left(
\{ v \in  H^1(\mesh, \RRR^d): \jump{v_n} =0\} \times L^2(\om, \mm) \right)\to \rr$
by 
\begin{align}
\blftwo(\tau, ( v,\eta)) &:=  \sum\limits_{T \in \mesh} \int_T
                   \divergence(\tau) \cdot v \dx  + \sum\limits_{T \in \mesh} \int_T
                   \tau : \eta  \dx - \sum\limits_{F \in
                   \facets} \int_F \jump{\tau_{nn}} v_n \ds. 
\label{eq::blftwoequione} 
\end{align}
Integrating the first integral by parts, we find the equivalent representation
\begin{align} 
  \blftwo(\tau, (v,\eta))
    &=  -\sum\limits_{T \in \mesh} \int_T \tau : (\nabla v - \eta) \dx + \sum\limits_{F \in \facets} \int_F \tau_{nt} \cdot \jump{v_t} \ds. \label{eq::blftwoequitwo}
\end{align}

Using these forms, we state the method.
For any $k \ge 1$,
the {\em discrete MCS method with weakly imposed symmetry}   finds $\sigma_h, u_h, \omega_h, p_h \in \Sigma_h^{+} \times V_h \times W_h \times Q_h$ such that
\begin{align} \label{eq::discrmixedstressstokesweak}
    \left\{
  \begin{aligned}
 \ablf (\Stressvarh ,\Stressvarhtest) + \blftwo(\Stressvarhtest, (\Velvarh, \omega_h)) & = 0 &&\text{ for all } \Stressvarhtest \in \Sigmaplus,  \\
\blftwo(\Stressvarh, (\Velvarhtest, \eta_h)) + \blfone(\Velvarhtest,
\Presvarh) & = (-\Forcevar, \Velvarhtest)  &&\text{ for all }
(\Velvarhtest, \eta_h) \in
U_h := \Velspaceh \times W_h,
\\
  \blfone(\Velvarh, \Presvarhtest) &=0 &&\text{ for all } \Presvarhtest \in \Presspaceh.
\end{aligned}
\right.    
\end{align}
Since $V_h $ and $Q_h$ fulfills $\divergence(V_h) = Q_h$, the discrete
velocity solution component~$u_h$ satisfies $\divergence(u_h) = 0$
point wise, providing exact mass conservation.

%

\subsection{Degrees of freedom of the new stress space} \label{ssec:dof}

We need degrees of freedom (d.o.f.s) for the stress space that are
well-suited for imposing normal-tangential continuity across element
interfaces.  Since the bubbles in $\delta \Sigma_h$ have zero
normal-tangential continuity, we ignore them for this discussion and
focus on d.o.f.s that control $\Sigma_h$.

Consider
$\Sigma_T = \{ \tau|_T: \tau \in \Sigma_h\}$ on any mesh element
$T$. Letting $ \mathbb{D}$ denote the subspace of matrices
$M \in {\mm} $ satisfying $M : \id =0,$ we may identify $\Sigma_T$
with $\Poly^k(T, \mathbb{D})$. Let us recall a basis for $\mathbb{D}$
that was given in~\cite{gopledschoe}.
Define the following two sets of  constant matrix functions, 
for $d=2$ and $d=3$ cases, respectively, by
\begin{subequations}
    \label{eq:Sbasis}
\begin{gather}
  \label{eq:basisfacetwo}
  S^i := \operatorname{dev}\big(\nabla \lambda_{i+1} \otimes \curl(
  \lambda_{i+2})\big),
  \\  \label{eq:basisfacethree}
  S_0^i := \operatorname{dev}\!\big( \nabla \lambda_{i+1}
  \otimes (\nabla \lambda_{i+2}
          \times \nabla \lambda_{i+3})\big),   
          \quad S_1^i := \operatorname{dev}\!\big( \nabla
          \lambda_{i+2} \otimes (\nabla \lambda_{i+3} \times \nabla
          \lambda_{i+1})\big),
\end{gather}  
\end{subequations}
taking the indices mod~3 and mod~4, respectively.  We proved
in~\cite[Lemma~5.1]{gopledschoe} that the sets $\{ S^i : i=0,1,2\}$
and $\{ S^i_q : i=0,1,2,3, \; q =0,1 \}$ form a basis of $\dd$ when
$d=2$ and $3$, respectively.

Our  d.o.fs for $\Sigma_T \equiv \Poly^k(T, \mathbb{D}) $ are grouped
into two. The first group is associated to the set of element facets
($d-1$ subsimplices of $T$), namely, for each facet $F \in \partial T$,
we define the set of d.o.f.s
\begin{align*}
\Phi^F(\tau) := \int_F \tau_{nt} \cdot r \ds
\end{align*}
for each $r$ in any fixed basis for $\Poly^{k}(F, \rr^{d-1})$.  The
next group is the set of interior d.o.f.s, defined by
\begin{align*}
  \Phi^{0}(\tau) :=  \int_T \tau : \varsigma \dx
\end{align*}
for all $\varsigma$ in any basis of $\Poly^{k-1}(T, \mathbb{D})$.
We proceed to prove that the set of these d.o.f.s, 
$\Phi(T) := \Phi^{0}(\tau) \cup \{\Phi^F: F \subset \partial T \}$,
is unisolvent.

\begin{theorem} \label{theo::unisolvent}
  The set $\Phi(T)$ is a set of unisolvent d.o.f.s for
  $\Sigma_T \equiv \Poly^k(T, \mathbb{D})$.
\end{theorem}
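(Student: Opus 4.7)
The plan is to prove unisolvence by matching dimensions and then showing injectivity of the d.o.f.\ map $\tau \mapsto (\Phi^F(\tau), \Phi^0(\tau))$.

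\textbf{Step 1 (dimension count).} I would first verify that $|\Phi(T)| = \dim \Poly^k(T,\mathbb{D}) = (d^2-1)\binom{k+d}{d}$. Each facet contributes $(d-1)\binom{k+d-1}{d-1}$ d.o.f.s (one set per component of $\tau_{nt}|_F$), and there are $d+1$ facets, giving $(d^2-1)\binom{k+d-1}{d-1}$ facet d.o.f.s in total. The interior d.o.f.s number $\dim\Poly^{k-1}(T,\mathbb{D}) = (d^2-1)\binom{k-1+d}{d}$. By Pascal's identity $\binom{k+d}{d} = \binom{k+d-1}{d-1} + \binom{k+d-1}{d}$, the sum matches $\dim \Poly^k(T,\mathbb{D})$.

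\textbf{Step 2 (facet traces vanish).} Assume all d.o.f.s in $\Phi(T)$ vanish. Since $\tau_{nt}|_F \in \Poly^k(F,\rr^{d-1})$ and the functionals $\Phi^F$ test $\tau_{nt}|_F$ against a basis of this space, vanishing of $\Phi^F(\tau)$ for all facets $F$ immediately gives $\tau_{nt}|_{\partial T} = 0$.

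\textbf{Step 3 (the main step: reduce via the basis of $\mathbb{D}$).} Expand $\tau$ in the basis of $\mathbb{D}$ from~\eqref{eq:Sbasis}: write $\tau = \sum_{i=0}^{2} p_i S^i$ for $d=2$, or $\tau = \sum_{i=0}^{3}\sum_{q=0,1} p_q^i S_q^i$ for $d=3$, with coefficient polynomials in $\Poly^k(T)$. On facet $F_j$, the outward normal is parallel to $\nabla \lambda_j$, and since $\curl(\lambda_j)\cdot\nabla\lambda_j = 0$ in 2D (and $(\nabla\lambda_a \times \nabla\lambda_b)\cdot\nabla\lambda_c = 0$ whenever $c\in\{a,b\}$ in 3D), a careful computation shows that certain basis elements have vanishing $(\cdot)_{nt}$ on $F_j$: for instance, in 2D one finds $(S^i)_{nt}|_{F_j} = 0$ iff $i \equiv j+1 \pmod 3$, and the remaining $(S^i)_{nt}|_{F_j}$ are nonzero constants. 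The condition $\tau_{nt}|_{F_j}=0$ then becomes a linear relation among the restricted polynomials $p_i|_{F_j}$, which (counting dimensions) imposes exactly $(d-1)\binom{k+d-1}{d-1}$ independent scalar constraints per facet.

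\textbf{Step 4 (conclude via dimension equality).} By Step 3, $\ker L_{\mathrm{facet}} := \{\tau \in \Poly^k(T,\mathbb{D}) : \tau_{nt}|_{\partial T} = 0\}$ has dimension $(d^2-1)\binom{k+d}{d} - (d^2-1)\binom{k+d-1}{d-1} = (d^2-1)\binom{k-1+d}{d} = \dim\Poly^{k-1}(T,\mathbb{D})$. The interior d.o.f.s give the bilinear pairing $\ker L_{\mathrm{facet}} \times \Poly^{k-1}(T,\mathbb{D}) \to \rr$, $(\tau,\varsigma)\mapsto \int_T \tau:\varsigma$. Since both sides of this pairing have the same (finite) dimension, injectivity on the $\tau$-side is equivalent to non-degeneracy. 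The nondegeneracy is proved by showing that $\ker L_{\mathrm{facet}} \cap \Poly_\perp^k(T,\mathbb{D}) = \{0\}$, that is, no nonzero element purely in the $L^2$-orthogonal complement of $\Poly^{k-1}$ can have vanishing facet $nt$-trace; this follows from the explicit basis relations of Step~3 since the constraints imposed on the $p_i$ by $\tau_{nt}|_{\partial T}=0$ involve precisely the top-degree parts. Combining everything, vanishing of $\Phi(T)$ forces $\tau = 0$.

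\textbf{Main obstacle.} The technically demanding part is Step~3, specifically the 3D case where the basis $\{S_q^i\}$ has eight elements indexed by $(i,q)$ and the vanishing pattern of $(S_q^i)_{nt}|_{F_j}$ must be worked out systematically using the identities involving $\nabla\lambda_{i+1}\cdot(\nabla\lambda_{i+2}\times\nabla\lambda_{i+3})$. Once this pattern is established, the rest of the argument is a dimension count and a linear algebra consequence of Pascal's identity.
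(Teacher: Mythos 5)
Your skeleton coincides with the paper's: vanishing facet d.o.f.s give $\tau_{nt}|_{\partial T}=0$, the interior d.o.f.s must then annihilate what remains, and the dimension count (your Step 1, which is correct and identical to the paper's) upgrades injectivity to unisolvence. The genuine gap is in Step 4. You correctly reduce nondegeneracy of the interior pairing to the claim $\ker L_{\mathrm{facet}} \cap \Poly_{\perp}^k(T,\dd)=\{0\}$, but the justification offered -- that the trace constraints ``involve precisely the top-degree parts'' -- is false as stated and cannot carry the argument: the restriction $p_i|_{F_j}$ sees every degree of $p_i$, and membership in $\Poly_{\perp}^k(T,\dd)$ is an $L^2(T)$-orthogonality condition, not a condition on leading coefficients, so there is no degree separation between the two hypotheses. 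The paper closes exactly this step with a structural ingredient your proposal lacks: by \cite[Lemma~5.2]{gopledschoe}, any $\tau\in\Poly^k(T,\dd)$ with $\tau_{nt}|_{\partial T}=0$ admits the bubble representation $\tau=\sum_i\mu_i\lambda_i S^i$ (resp.\ $\sum_{i,q}\mu_i^q\lambda_i S_q^i$) with coefficients $\mu_i\in\Poly^{k-1}(T)$; testing against $\sum_i\mu_i S^i\in\Poly^{k-1}(T,\dd)$ then yields a positive weighted quadratic form forcing $\mu_i=0$. Without that representation (or a substitute for it), your Step 4 is an assertion, not a proof.

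Step 3 also needs repair. The claimed 2D vanishing pattern is wrong: $(S^i)_{nt}$ vanishes on $F_{i+2}$ because $\curl(\lambda_{i+2})\cdot\nabla\lambda_{i+2}=0$, and on $F_{i+1}$ because $(\nabla\lambda_{i+1}\otimes\curl(\lambda_{i+2}))\,n_{i+1}$ is parallel to $n_{i+1}$; hence each $S^i$ has nonzero normal-tangential trace on exactly one facet, $F_i$, not on two as your ``iff $i\equiv j+1 \pmod 3$'' would give. More importantly, the conclusion you draw -- that the facet conditions impose exactly $(d^2-1)\dim\Poly^k(F)$ jointly independent constraints, i.e.\ that the facet trace map is surjective -- is precisely the hard content in 3D (one must show the traces of the $S_q^i$ that do not vanish on a given facet span its tangent plane), and you flag it as the main obstacle without supplying it. Both missing pieces are exactly what \cite[Lemmas~5.1--5.2]{gopledschoe} provide, which is why the paper imports them rather than re-deriving the trace patterns.
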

\begin{proof}
  Suppose  $\tau \in \Sigma_T$ satisfies 
  $\phi(\tau) = 0$ for all d.o.f.s 
  $\phi \in \Phi(T)$. We need to show that $\tau=0$.
  From the facet d.o.f.s we conclude that $\tau_{nt}$ vanishes on
  $\d T$.  By~\cite[Lemma~5.2]{gopledschoe}, 
   $\tau$ may be expressed as
  \begin{align}
    \label{eq:tmpexp}
    \tau = \sum\limits_{i=0}^2
    \mu_i \lambda_i S^i \quad \textrm{or} \quad
    \tau = \sum\limits_{q=0}^1\sum\limits_{i=0}^3 \mu^q_i \lambda_i S_q^i,
  \end{align}
  when $d=2$ or $3$, respectively, where
  $\mu_i, \mu^0_i, \mu^1_i \in \Poly^{k-1}(T)$. The interior d.o.f.s
  imply that $\int_T \tau : s \dx =0$ for any $s \in
  \Poly^{k-1}(\hat{T}, \mathbb{D})$. Choosing for $s$
  the expression on the right hand
  side in~\eqref{eq:tmpexp} omitting the $\lambda_i$,
  say for the $d=2$ case, we  obtain 
  \begin{align*}
    \int_T \sum\limits_{i=0}^2 \mu_i \lambda_i S^i:
    \sum\limits_{i=0}^2 \mu_i S^i \dx
    =
    \int_T \lambda_i
    \bigg|\sum\limits_{i=0}^2 \mu_i S^i \bigg|^2 \dx = 0, 
  \end{align*}
  yielding $\mu_i = 0$, and thus $\tau=0$. A similar argument in $d=3$
  case yields the same conclusion that $\tau=0$. 

  To complete the proof, it now suffices to prove that
  $\dim(\Sigma_T) $ equals the number of d.o.f.s, i.e.,
  $\#\Phi(T)$. Obviously,
  $\dim(\Sigma_T) = \dim \Poly^k(T, \dd) = (d^2 -1) \dim
  \Poly^k(T)$. The cardinality of $\Phi(T)$ equals the sum of the number
  of facet d.o.f.s $(d+1) (d-1) \dim\Poly^k(T)$ and the number of interior
  d.o.f.s $(d^2-1) \dim \Poly^{k-1}(T)$, which simplifies to
  $(d^2-1) \big( \dim \Poly^{k-1}(T) + \dim \Poly^k(F)\big)$,
  equalling $\dim(\Sigma_T)$.
\end{proof}

Using these d.o.f.s, a canonical local interpolant $I_T(\tau)$ in
$\Sigma_T$ can be defined as usual, by requiring that
$ \psi(\tau - I_T\tau) =0,$ for all $ \psi \in \Phi(T).$

\begin{lemma}\label{lem:MapI}
  For any $\tau \in H^1(T, \mathbb{D}),$ we have
  $    \mathcal{M}^{-1}(I_T\tau) = I_{\hat{T}}(\mathcal{M}^{-1}(\tau)).$
\end{lemma}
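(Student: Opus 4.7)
The plan is to reduce the commutation identity to an equality of degrees of freedom. Both sides of the identity lie in $\Sigma_{\hat T}$: the right-hand side by construction, the left because $\mathcal{M}^{-1}$ is a bijection $\Sigma_T \to \Sigma_{\hat T}$. This bijectivity follows because affine composition preserves polynomial degree and conjugation by $F^{\pm\trans}$ preserves tracelessness, since $\operatorname{tr}(F^{-\trans} A F^\trans)=\operatorname{tr}(A)$. By the unisolvence granted by Theorem~\ref{theo::unisolvent}, the identity reduces to the condition that every $\hat\psi\in\Phi(\hat T)$ evaluates to the same value on both sides. Using the defining property of $I_{\hat T}$, this becomes
\[
\hat\psi\bigl(\mathcal{M}^{-1}(I_T\tau - \tau)\bigr)=0 \qquad \text{for every } \hat\psi \in \Phi(\hat T).
\]

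To establish the above vanishing, the strategy is to prove that, as a functional on the enlarged domain $H^1(T, \dd)$, each composed functional $\hat\psi \circ \mathcal{M}^{-1}$ lies in the linear span of $\Phi(T)$. Once this is shown, the vanishing follows from $\psi(I_T\tau - \tau)=0$ for all $\psi \in \Phi(T)$, which holds by definition of $I_T$.

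I would verify the functional pullback for the two kinds of reference d.o.f.s separately. \emph{Interior d.o.f.s.} For $\hat\varsigma \in \Poly^{k-1}(\hat T, \dd)$, substitute the inverse formula $\mathcal{M}^{-1}(\tau) = \det(F)\, F^\trans (\tau\circ\phi) F^{-\trans}$ into $\int_{\hat T} \mathcal{M}^{-1}(\tau):\hat\varsigma\, d\hat x$; the identity $(F^\trans A\, F^{-\trans}):B = A:(F B F^{-1})$ together with the change of variables $x=\phi(\hat x)$ yields $\operatorname{sign}(\det F)\int_T \tau:\varsigma\,dx$ with $\varsigma(x) := F\,\hat\varsigma(\phi^{-1}(x))\,F^{-1}$, which belongs to $\Poly^{k-1}(T, \dd)$ since conjugation preserves tracelessness. \emph{Facet d.o.f.s.} For $\hat r \in \Poly^k(\hat F, \rr^{d-1})$, combine the scalar proportionality $(\mathcal{M}(\hat\sigma))_{nt} = \alpha_F (\hat\sigma)_{\hat n \hat t}$ from \cite[Lemma~5.3]{gopledschoe} with the affine transformation of the tangent frame and the surface measure under $\phi$ to rewrite $\int_{\hat F} \mathcal{M}^{-1}(\tau)_{\hat n \hat t}\cdot \hat r\, d\hat s$ as $\int_F \tau_{nt}\cdot r\, ds$ with $r \in \Poly^k(F, \rr^{d-1})$ depending linearly on $\hat r$.

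Combining the two cases shows that $\hat\psi \circ \mathcal{M}^{-1} \in \Span(\Phi(T))$ for every $\hat\psi \in \Phi(\hat T)$, which closes the reduction and yields the claim. The main obstacle is the facet bookkeeping: one has to match the scalar factor $\alpha_F$ from the normal-tangential scaling, the Jacobian of the facet map, and the change in the tangent basis so that the composed weight applied to $\hat r$ is, after all cancellations, a genuine polynomial test function of degree $k$ on $F$, rather than a rational weight that would land outside $\Phi(T)$. The interior case is a direct computation once the contraction identity is at hand, and the bijectivity of $\mathcal{M}$ follows immediately from degree preservation and the trace identity.
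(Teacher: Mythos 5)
Your argument is correct and is essentially the argument the paper intends: the paper's proof is just a pointer to \cite[Lemma~5.4]{gopledschoe}, whose proof is precisely this reduction via unisolvence to showing that each reference degree of freedom composed with $\mathcal{M}^{-1}$ lies in the span of the physical degrees of freedom, checked separately for interior and facet functionals. Your facet bookkeeping concern is resolved exactly by the cited normal-tangential scaling property of $\mathcal{M}$ (\cite[Lemma~5.3]{gopledschoe}), so there is no gap.
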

\begin{proof}
  This proceeds along the same lines as the proof
  of~\cite[Lemma~5.4]{gopledschoe}.
\end{proof}

The global interpolant   $I_{\Sigma_h}$  is also defined as usual.
On each element $T \in \mesh$ the global
interpolant $(I_{\Sigma_h} \tau)|_T$ coincides with the local
interpolant $I_T(\tau|_T)$.

\begin{theorem} \label{the::sigmainterpolation}
  For any $m \ge 1$ and any  $\sigma \in \{ \tau \in H^m(\mesh,
  \mathbb{D}): \jump{\tau_{nt}} = 0\}$, the global interpolation
  operator $I_{\Sigma_h}$  satisfies
  \begin{align*}
    \| \sigma - I_{\Sigma_h}\sigma \|^2 +
    \sum\limits_{F \in \facets} h \| (\sigma-I_{\Sigma_h}\sigma)_{nt}
    \|_F^2
    \;\lesssim\;
    h^{2s} \| \sigma\|^2_{H^s(\mesh)},
  \end{align*}
  for all $s \le \min(k+1,m)$.
\end{theorem}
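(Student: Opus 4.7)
My plan is to follow the classical template for finite element interpolation estimates: reduce to an element-wise bound, use Lemma~\ref{lem:MapI} to transfer the interpolant to the reference element $\hat T$, apply the Bramble--Hilbert lemma there, and scale back through $\mathcal{M}$. First, I note that $I_{\Sigma_h}$ is defined element-wise and that the facet d.o.f.s $\Phi^F$ are single-valued across an interior facet $F$, so $(I_{\Sigma_h}\sigma)_{nt}$ is continuous across every facet. Since $\jump{\sigma_{nt}} = 0$ by assumption, so is $(\sigma - I_{\Sigma_h}\sigma)_{nt}$, and the facet sum in the statement is bounded by $\sum_{T \in \mesh} \sum_{F \subset \partial T} h\|(\sigma - I_T\sigma)_{nt}\|_{L^2(F)}^2$. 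It therefore suffices to prove, for each $T \in \mesh$,
\[
  \|\sigma - I_T \sigma\|_{L^2(T)}^2 + h \sum_{F \subset \partial T} \|(\sigma - I_T \sigma)_{nt}\|_{L^2(F)}^2 \lesssim h^{2s} \|\sigma\|_{H^s(T)}^2.
\]

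Next, set $\hat\sigma = \mathcal{M}^{-1}(\sigma)$ on $\hat T$. Lemma~\ref{lem:MapI} gives $\mathcal{M}^{-1}(I_T\sigma) = I_{\hat T}\hat\sigma$. The d.o.f.s defining $I_{\hat T}$ are $L^2$-pairings on $\hat T$ and on its facets; the facet pairings are bounded linear functionals on $H^1(\hat T, \dd)$ by the standard trace theorem, so $I_{\hat T}$ is bounded as a map $H^s(\hat T, \dd) \to H^1(\hat T, \dd)$ for every $1 \le s \le k+1$. Since $I_{\hat T}$ reproduces $\Poly^k(\hat T, \dd)$, the Bramble--Hilbert lemma yields
\[
  \|\hat\sigma - I_{\hat T}\hat\sigma\|_{L^2(\hat T)} + |\hat\sigma - I_{\hat T}\hat\sigma|_{H^1(\hat T)} \lesssim |\hat\sigma|_{H^s(\hat T)}, \qquad 1 \le s \le k+1.
\]

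Finally, using $\|F\|_{\ell^\infty}, \|F^{-1}\|_{\ell^\infty}^{-1} \sim h$ and $|\det F| \sim h^d$, a direct computation on $\mathcal{M}(\hat\sigma) = \det(F)^{-1} F^{-\trans} \hat\sigma F^\trans$ and its derivatives (each spatial derivative brings an additional factor of $F^{-1}$ via the chain rule) yields the scaling relations $\|\sigma\|_{L^2(T)} \sim h^{-d/2}\|\hat\sigma\|_{L^2(\hat T)}$ and $|\sigma|_{H^l(T)} \sim h^{-d/2 - l}|\hat\sigma|_{H^l(\hat T)}$ for $l \ge 0$. Applied to $\sigma - I_T\sigma = \mathcal{M}(\hat\sigma - I_{\hat T}\hat\sigma)$, these convert the reference estimates into
\[
  \|\sigma - I_T\sigma\|_{L^2(T)} \lesssim h^s|\sigma|_{H^s(T)}, \qquad |\sigma - I_T\sigma|_{H^1(T)} \lesssim h^{s-1}|\sigma|_{H^s(T)}.
\]
The facet term is then controlled by the standard scaling trace inequality $\|\phi\|_{L^2(F)}^2 \lesssim h^{-1}\|\phi\|_{L^2(T)}^2 + h|\phi|_{H^1(T)}^2$ applied to $\phi = \sigma - I_T\sigma$:
\[
  h\|(\sigma - I_T\sigma)_{nt}\|_{L^2(F)}^2 \lesssim \|\sigma - I_T\sigma\|_{L^2(T)}^2 + h^2|\sigma - I_T\sigma|_{H^1(T)}^2 \lesssim h^{2s}|\sigma|_{H^s(T)}^2.
\]
Summing over $T \in \mesh$ finishes the proof. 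The only genuine technical subtlety lies in the scaling behaviour of the nonstandard map $\mathcal{M}$ from~\eqref{eq:map_M}, which is handled exactly as in~\cite[Lemma~5.4]{gopledschoe}; everything else is a routine application of Bramble--Hilbert and trace scaling.
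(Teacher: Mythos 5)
Your proposal is correct and follows exactly the route the paper indicates for this theorem (the paper's proof is the one-line remark that it ``follows from a standard Bramble--Hilbert argument using Lemma~\ref{lem:MapI}''): element-wise reduction, transfer to $\hat T$ via $\mathcal{M}$ and the commutation property of Lemma~\ref{lem:MapI}, Bramble--Hilbert on the reference element, and scaling back, with the facet term handled by a scaled trace inequality (equivalently, one could invoke the facet scaling already recorded in Lemma~\ref{lem:equiv1}). The scaling exponents you derive for $\mathcal{M}$ are consistent with \eqref{eq::sigmascaling}, so no gap.
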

\begin{proof}
  This follows from a standard Bramble-Hilbert argument using
  Lemma~\ref{lem:MapI}.
\end{proof}

\section{A priori  error analysis} \label{sec::apriori}

In this section we first show the stability of the MCS method with
weakly imposed symmetry by proving a discrete inf-sup condition
(Theorem~\ref{theorem::LBB}).  We then prove consistency
(Theorem~\ref{the::consistency}), optimal error estimates
(Theorem~\ref{the::errorconvergence}), and pressure robustness
(Theorem~\ref{the::presrobust}). For simplicity, the analysis from now
on assumes that $\nu$ is a constant.

\subsection{Norms} \label{sec:norms}

In addition to the previous notation for norms (established in
Section~\ref{sec:preliminaries}), hereon we also use $ \| \cdot \|_h^2$
to abbreviate $\sum_{T \in \mesh} \| \cdot \|_T^2$, a notation that
also serves to indicate that certain seminorms are defined using
differential operators applied element by element, not globally, e.g.,
\begin{gather*}
  \| \eps(v)\|_h^2 := \sum_{T \in \mathcal{T}_h} \| \eps(v) \|_T^2, \qquad
  \| \curl(\gamma)\|_h^2 := \sum_{T \in \mathcal{T}_h} \| \curl(\gamma) \|_T^2,
  \\
  \|{ v }\|_{1,h,\eps}^2  := 
  \|  \eps(v) \|_h^2
  + \sum\limits_{F \in \facets} 
  \frac{1}{h} \big\| \jump{{ v_\tangential}} \big\|^2_{F},  
\end{gather*}
for $v \in H^1(\mesh, \rr^d)$ and $\gamma \in H^1(\mesh, \mm)$.
Recall that $U_h = V_h \times W_h$.  Our analysis is based on norms of
the type used in~\cite{Stenberg1988}. Accordingly, we will need to use
the following norms for $v_h \in V_h$ and $\eta_h \in W_h$:
\begin{gather*}
  \| v_h \|_{V_h}^2
  = \|{ v_h }\|_{1,h,\eps}^2,  
  \qquad
  \| (v_h, \eta_h) \|_{U_h}^2
  := \|{ v_h }\|_{1,h,\eps}^2 +
  \|  \kappa(\curl v_h) - \eta_h \|_h^2.
\end{gather*}
Lemma~\ref{lem::singlenorms} below will show that
the latter is indeed   a norm.

On the discrete space
$U_h$, we will also need another norm defined using the following projections. On
any mesh element $T$, let $\Pi^{k-1}_T$ denote the $L^2(T, \vv)$
orthogonal projection onto $\Poly^k(T, \vv)$ where $\vv$ is determined
from context to be an appropriate vector space such as $\rr^d,$ or
$ \mm$. When the element $T$ is
clear from context, we shall drop the subscript $T$ in $\Pi^{k-1}_T$
and simply write $\Pi^{k-1}$.
Also, on each facet $F \in \facets,$  we introduce a
projection onto the tangent plane $n_F^\perp$: for any
$v \in L^2(F, n_F^\perp)$, the projection $\proj_F^1 v \in
\Poly^1(F, n_F^\perp)$ is defined by 
$(\proj_F^1 v, r)_F = (v, r)_F$ for all $r \in \Poly^1(F, n_F^\perp)$.
Using these, define
\begin{gather}
  \label{eq:Uh*norm}
  \| (v_h, \eta_h) \|_{U_h,*}^2  := 
  \sum\limits_{T \in \mesh} \| \Pi^{k-1}_T \Dev{\nabla v_h - \eta_h} \|_{T}^2
  + \sum\limits_{F \in \facets} 
  \frac{1}{h} \| \Pi^1_F\jump{{ (v_h)_\tangential}} \|^2_{F}.
\end{gather}
Lemma~\ref{lem::normequi} below will help us go between this norm and $\|
(v_h, \eta_h)\|_{U_h}$.

The remaining spaces $ \Sigmaplus$ and $Q_h$ are simply normed by the
$L^2$ norm $\| \cdot \|$. The full
discrete space is normed by
\begin{equation}
  \label{eq:norm1}
  \|(v_h, \eta_h,\tau_h, q_h) \|_* :=
  \sqrt{\visc} \Velnormh{ (v_h, \eta_h) } +
  \frac{1}{\sqrt{\visc}} ( \Sigmanormh{\tau_h} +
  \Presnormh{q_h})  
\end{equation}
for any $(v_h, \eta_h,\tau_h, q_h)  \in V_h \times W_h \times
\Sigma_h^+ \times Q_h$.

\subsection{Norm equivalences} \label{ssec:equivalences}

Next, we use the finite element mappings introduced earlier
--see~\eqref{eq:map_PW} and~\eqref{eq:map_M}-- to  show several norm equivalences.

\begin{lemma} \label{lem:equiv1}
  Let $\tau_h \in \Sigma_h^+$. Then 
\begin{align}  \label{eq::sigmascaling}
  h^d \| \tau_h \|^2_T &\sim \| \hat{\tau}_h \|^2_{\hat{T}} \quad \textrm{for all} \quad T \in \mesh \\
  h^{d+1} \| (\tau_h)_{nt} \|^2_F &\sim \| (\hat{\tau}_h)_{\hat n \hat t} \|^2_{\hat{F}} \quad \textrm{for all} \quad F \in \facets.
  \\
  \label{eq::sigmanormequi}
  \Sigmanormh{ \tau_h }^2
  & \sim \sum\limits_{T \in \mesh} \| {\tau_h}
    \|_{T}^2 + \sum\limits_{F \in \facets} h 
    \big\| \jump{(\tau_h)_{\normal\tangential}} \big\|_{F}^2.
\end{align}
\end{lemma}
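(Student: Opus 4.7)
The plan is to exploit the covariant-style map $\mathcal{M}$ defined in \eqref{eq:map_M}, together with standard scalings $\|F\|, \|F^{-1}\| \sim h, h^{-1}$, $|\det F| \sim h^d$, and then reduce all three equivalences to norm equivalences on the finite-dimensional reference space.

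For the volumetric bound \eqref{eq::sigmascaling}, I would start from the defining identity $\tau_h = \det(F)^{-1} F^{-\trans}(\hat\tau_h \circ \phi^{-1}) F^{\trans}$ and pointwise bound $|\tau_h| \lesssim |\det F|^{-1} \|F^{-\trans}\|\,\|F^{\trans}\|\, |\hat\tau_h \circ \phi^{-1}| \sim h^{-d} |\hat\tau_h \circ \phi^{-1}|$, with the reverse inequality coming by the same argument applied to $\hat\tau_h = \mathcal{M}^{-1}(\tau_h)$. Then change of variables in $\|\tau_h\|_T^2 = \int_T |\tau_h|^2\,dx$ with $dx \sim h^d\, d\hat x$ yields the claimed two-sided bound. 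For the facet bound, I would invoke \cite[Lemma~5.3]{gopledschoe}, which states that $(\mathcal{M}(\hat\tau_h))_{nt}$ is a scalar multiple of the reference normal-tangential trace $(\hat\tau_h)_{\hat n \hat t}$; tracking that scalar (which, as a ratio of $F^{-\trans} \hat n$ lengths combined with the factor $\det(F)^{-1} F^{-\trans}$, behaves like $h^{-d}$ in magnitude) together with the surface-element identity $ds \sim h^{d-1}\,d\hat s$ gives $|(\tau_h)_{nt}|^2\,ds \sim h^{-(d+1)} |(\hat\tau_h)_{\hat n \hat t}|^2\, d\hat s$, which integrates to the claimed equivalence.

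For \eqref{eq::sigmanormequi} the first direction is immediate since $\Sigmanormh{\tau_h}^2 = \|\tau_h\|^2 = \sum_{T \in \mesh} \|\tau_h\|_T^2$ by definition of the norm on $\Sigma_h^+$ (see \S\ref{sec:norms}), so the right-hand side trivially dominates the left. For the reverse direction one must bound $\sum_{F \in \facets} h \|\jump{(\tau_h)_{nt}}\|_F^2$ by $\sum_T \|\tau_h\|_T^2$. The key simplification is that $\jump{(\tau_h)_{nt}} = 0$ on every interior facet: functions in $\Sigma_h$ have this property by the definition \eqref{eq:Sigma_h}, and functions in $\delta\Sigma_h$ have $(\tau_h)_{nt} \equiv 0$ on $\partial T$ by Lemma~\ref{lem::deltaspacenormtang}. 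Hence only boundary facets contribute, and on each such facet an element-by-element trace inequality $h \|(\tau_h)_{nt}\|_F^2 \lesssim \|\tau_h\|_T^2$ suffices; this follows by mapping to $\hat T$ via \eqref{eq::sigmascaling}--\eqref{eq::sigmanormequi} (the first two parts of the lemma) and then invoking equivalence of norms on the finite-dimensional reference space $\Poly^k(\hat T, \dd)$.

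I expect the first two scalings to be mechanical, and the main place where care is needed is in the facet scaling: one must reconcile the identity $(\mathcal{M}(\hat\tau_h))_{nt} = \alpha (\hat\tau_h)_{\hat n \hat t}$ from \cite[Lemma~5.3]{gopledschoe} with the correct tracking of the scalar $\alpha$ under the quasiuniformity assumption, since a naive bound using $\|F^{-\trans}\|$ alone would give only an inequality rather than an equivalence. Once the two-sided facet bound is in hand, the third part is essentially bookkeeping plus the observation that the jump term vanishes on interior facets.
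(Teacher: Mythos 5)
Your argument is correct and is essentially the same scaling argument the paper itself relies on: the paper's proof simply states that the first two equivalences ``follow by a simple scaling argument'' and refers to the proof of Lemma~6.1 of \cite{gopledschoe} for the third, and your write-up fills in exactly those details (the pointwise bound from the map $\mathcal{M}$ with $|\det F|^{-1}\|F^{-\trans}\|\|F^{\trans}\|\sim h^{-d}$, the facet scaling via the scalar from \cite[Lemma~5.3]{gopledschoe}, and a discrete trace inequality for the jump term). Your observation that $\jump{(\tau_h)_{nt}}$ vanishes on interior facets for $\tau_h\in\Sigma_h^+$ is a valid (if not strictly necessary) shortcut, since the element-wise polynomial trace inequality already handles all facets.
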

\begin{proof}
The first two  follow by a simple scaling argument. For the third, see the proof of \cite[Lemma 6.1]{gopledschoe}.
\end{proof}

In the proof of the next lemma, we use the space of rigid displacements $\RM = \Poly^0(T, \rr^d) + \Poly^0(T, \kk)\, x.$ For each element $T \in \mesh$, let $\Pi^{{\RM}}:H^1(T) \to \RM$ denote the projector defined in~\cite{MR2047078}.
Then, for any 
 $v_h \in V_h,$ the projection
$\Pi^{{\RM}} v_h \in \RM$ 
fulfills the properties (see  \cite[eq.~(3.3), (3.11)]{MR2047078})
\begin{alignat}{2}
  \| \nabla (v_h -  \Pi^{{\RM}}v_h) \|_T &\sim \| \eps ( v_h)\|_T   
  && \quad \text{ for all } T \in \mesh, \label{eq::kornone}\\
  \big\| \jump{v_h -  \Pi^{{\RM}}v_h} \big\|_F^2 &\lesssim \sum\limits_{T: T \cap F \neq \emptyset} h\| \eps ( v_h)\|^2_T
  && \quad \text{ for all } F \in \facets. \label{eq::korntwo}
\end{alignat}
We shall also use a global discrete Korn inequality, implied by 
\cite[Theorem~3.1]{MR2047078}. Namely, there is an $h$-independent constant
$\ckorn$ such that 
\begin{equation}
  \label{eq:discrete_Korn}
  \ckorn^2 \|\grad v\|_h^2 \le 
  \| \eps(v) \|_h^2
  +
  \sum_{F \in \facets} h^{-1} \big\| \Pi_F^1 \jump{v} \big\|_F^2,
  \quad \text{ for all } v \in H^1(\mesh, \rr^d).
\end{equation}

\begin{lemma} \label{lem::normeqvel} 
  For all $(v_h, \eta_h) \in U_h$,
  \begin{align*}
    \| (v_h, \eta_h) \|_{U_h}^2 \sim
    \|  \eps(v_h) \|_h^2 + \|  \kappa(\curl v_h) - \eta_h \|_h^2
    + \sum\limits_{F \in \facets} 
    \frac{1}{h} \big\| \proj_F^1\jump{{ (v_h)_\tangential}} \big\|^2_{F}    
  \end{align*}
\end{lemma}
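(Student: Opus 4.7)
The plan is to prove the two inclusions separately. The direction $\gtrsim$ is immediate: since $\proj_F^1$ is the $L^2$-orthogonal projection onto $\Poly^1(F, n_F^\perp)$, it is a contraction, so $\|\proj_F^1 \jump{(v_h)_\tau}\|_F \le \|\jump{(v_h)_\tau}\|_F$, and the two non-jump terms of the right-hand side already appear verbatim in the definition of $\|(v_h, \eta_h)\|_{U_h}^2$. Hence all the work is in establishing the bound $\sum_{F} h^{-1}\|\jump{(v_h)_\tau}\|_F^2 \lesssim \sum_{F} h^{-1}\|\proj_F^1 \jump{(v_h)_\tau}\|_F^2 + \|\eps(v_h)\|_h^2$.

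The strategy is to exploit the rigid-displacement projector $\Pi^{\RM}$ together with the Korn-type facet estimate \eqref{eq::korntwo}. The key observation is that for every $T \in \mesh$ and every $v_h \in V_h$, the function $\Pi^{\RM} v_h$ is affine on $T$; consequently its tangential trace on any facet $F \subset \partial T$ lies in $\Poly^1(F, n_F^\perp)$, and therefore so does the jump $\jump{(\Pi^{\RM} v_h)_\tau}$. Thus $\proj_F^1$ acts as the identity on this jump.

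Splitting $v_h = \Pi^{\RM} v_h + (v_h - \Pi^{\RM} v_h)$ and applying $\proj_F^1$ to $\jump{(v_h)_\tau}$ gives
\begin{equation*}
  \jump{(\Pi^{\RM} v_h)_\tau}
  = \proj_F^1 \jump{(v_h)_\tau}
    - \proj_F^1 \jump{(v_h - \Pi^{\RM} v_h)_\tau}.
\end{equation*}
Substituting this back into the triangle inequality
$\|\jump{(v_h)_\tau}\|_F \le \|\jump{(\Pi^{\RM} v_h)_\tau}\|_F + \|\jump{(v_h - \Pi^{\RM} v_h)_\tau}\|_F$ and using again that $\proj_F^1$ is a contraction yields
\begin{equation*}
  \|\jump{(v_h)_\tau}\|_F
  \;\le\;
  \|\proj_F^1\jump{(v_h)_\tau}\|_F
  + 2\,\|\jump{(v_h - \Pi^{\RM} v_h)_\tau}\|_F.
\end{equation*}

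The final step invokes \eqref{eq::korntwo}, which bounds the second term by $\bigl(\sum_{T: T \cap F \neq \emptyset} h\|\eps(v_h)\|_T^2\bigr)^{1/2}$. Squaring, dividing by $h$, summing over $F \in \facets$, and using shape-regularity (each element has at most $d+1$ facets) produces the desired estimate. The expected main obstacle is precisely the recognition that $\Pi^{\RM} v_h$ is affine, so its facet jump lives in $\Poly^1$ and is therefore preserved by $\proj_F^1$; without this observation one has no leverage to recover the full jump from its $\Poly^1$ projection. Once this is in hand, the remainder is a direct combination of the triangle inequality and \eqref{eq::korntwo}.
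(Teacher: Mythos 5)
Your proof is correct and follows essentially the same route as the paper: both directions rest on the $L^2$-contractivity of $\proj_F^1$, the observation that $\Pi^{\RM}v_h$ is affine so its tangential facet jump lies in $\Poly^1(F, n_F^\perp)$ and is reproduced by $\proj_F^1$, and the Korn-type facet estimate \eqref{eq::korntwo}. The only difference is cosmetic: the paper splits $\jump{(v_h)_t}$ into $\proj_F^1\jump{(v_h)_t}$ plus the remainder $(\id-\proj_F^1)\jump{(v_h-\Pi^{\RM}v_h)_t}$, whereas you reach the same bound (with the same constant $2$) via the triangle inequality through $\jump{(\Pi^{\RM}v_h)_t}$.
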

\begin{proof}
  One side of the equivalence is obvious by the continuity of the $\proj_F^1$. For the other direction first note that
  $h^{-1}\| \jump{{ (v_h)_\tangential}} \|^2_{F} \le 2 h^{-1}
  \| \proj_F^1  \jump{(v_h)_\tangential} \|^2_{F} +
  2 h^{-1}\| \jump{{ (v_h - \proj_F^1 v_h)_\tangential}} \|^2_{F}.$
  As $\Pi^{{\RM}}v_h \in \Poly^1(T, \rr^d)$ we have again by the continuity of $\proj_F^1,$
  \begin{align*}
    \| \jump{{ (v_h - \proj_F^1 v_h)_\tangential}} \|^2_{F} = \| (\id - \proj_F^1) \jump{{ (v_h - \Pi^{{\RM}}v_h)_\tangential}} \|^2_{F} \le \| \jump{{ (v_h - \Pi^{{\RM}}v_h)_\tangential}} \|^2_{F}.
  \end{align*}
  We conclude the proof using \eqref{eq::korntwo}.
\end{proof}

The following well-known  property of Raviart-Thomas spaces (see, e.g., \cite[Lemma~3.1]{CockbGopal04})  is needed at several points.

\begin{lemma}\label{lem::divfreeRT}
  Let $v \in \Poly^k(T, \rr^d) +  x\Poly^k(T, \rr)$ and $\divergence(v)=0$. Then $v$ is in $\Poly^k(T, \RRR^d)$.
\end{lemma}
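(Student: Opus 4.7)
The plan is to exploit the standard internal structure of Raviart--Thomas functions together with Euler's identity for homogeneous polynomials. First, I would normalize the decomposition: any $v \in \Poly^k(T,\rr^d) + x\Poly^k(T,\rr)$ can be written as $v = p + x\tilde{q}$ with $p \in \Poly^k(T,\rr^d)$ and $\tilde{q}$ homogeneous of degree exactly $k$, because any contribution $x\cdot r$ with $r \in \Poly^{k-1}(T,\rr)$ already lies in $\Poly^k(T,\rr^d)$ and can be absorbed into $p$. This reduction is important for the counting argument that follows.

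The key computation is $\div(x\tilde{q}) = d\,\tilde{q} + x\cdot\nabla\tilde{q}$. By Euler's identity, since $\tilde{q}$ is homogeneous of degree $k$, the second term equals $k\tilde{q}$, so
\begin{equation*}
  \div(x\tilde{q}) = (d+k)\,\tilde{q},
\end{equation*}
which is itself homogeneous of degree $k$. On the other hand $\div(p) \in \Poly^{k-1}(T,\rr)$, so in the identity $\div(v) = \div(p) + (d+k)\tilde{q} = 0$, the homogeneous-of-degree-$k$ component of the left-hand side comes entirely from the second summand. Projecting onto this homogeneous component therefore forces $(d+k)\tilde{q} = 0$, and since $d+k > 0$ we conclude $\tilde{q} \equiv 0$. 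Consequently $v = p \in \Poly^k(T,\rr^d)$, as claimed.

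There is no genuine obstacle in this proof; the only point that requires a little care is the normalization in the first step, ensuring that $\tilde{q}$ is taken to be the top-degree homogeneous piece. Without this reduction, one would have to split $\tilde{q}$ into homogeneous components and compare degrees term by term, which is essentially the same argument but less transparent.
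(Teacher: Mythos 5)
Your proof is correct. The paper itself does not prove this lemma -- it only cites \cite[Lemma~3.1]{CockbGopal04} -- and your argument (normalize so that the radial part $x\tilde q$ has $\tilde q$ homogeneous of top degree $k$, compute $\div(x\tilde q)=(d+k)\tilde q$ via Euler's identity, and compare homogeneous components against $\div(p)\in\Poly^{k-1}(T,\rr)$) is precisely the standard argument behind that reference, with all the details, including the absorption of lower-degree radial terms into $\Poly^k(T,\rr^d)$, handled correctly.
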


\begin{lemma} \label{lem:gradofRT}
  For all $T \in \mesh$ and $v_h \in V_h,$ 
  \begin{gather}
    \label{eq:eps-equiv-1}
    \| \eps(v_h)\|_T^2
    \sim \| \Pi^{k-1}\Dev{\eps(v_h)} \|_T^2 + \| \divergence (v_h) \|_T^2
    \\
    \label{eq:(I-Pi)curl}
    \| (\id - \Pi^{k-1}) \kappa(\curl v_h) \|_T^2
    \lesssim \| \divergence (v_h) \|_T^2,
    \\
    \label{eq:(I-Pi)grad}
    \| (\id - \Pi^{k-1}) \nabla v_h \|_T^2
    \lesssim \| \divergence (v_h) \|_T^2.
 \end{gather}
\end{lemma}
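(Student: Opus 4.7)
\emph{Overall plan.} I will prove the three estimates in the order \eqref{eq:(I-Pi)grad}, \eqref{eq:(I-Pi)curl}, \eqref{eq:eps-equiv-1}, since \eqref{eq:(I-Pi)grad} is the key technical ingredient and the other two follow by short algebraic manipulations.

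\emph{The main work is \eqref{eq:(I-Pi)grad}.} Both sides define seminorms on the finite-dimensional space $\RT^k(T)$, and by Lemma~\ref{lem::divfreeRT} the kernel of $v_h \mapsto \|\divergence v_h\|_T$ equals $\Poly^k(T,\rr^d)$, on which $\nabla v_h$ lies in $\Poly^{k-1}(T,\mm)$ and hence $(\id-\Pi^{k-1})\nabla v_h$ vanishes too. Running this reasoning on $\hat T$ (where everything is independent of $h$) produces a universal constant $C$ such that $\|(\id - \hat\Pi^{k-1})\hat\nabla \hat v\|_{\hat T} \le C \|\widehat{\divergence}\hat v\|_{\hat T}$ for every $\hat v \in \RT^k(\hat T)$. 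I then transfer this bound to $T$ using the Piola map $v_h = \piola(\hat v_h)$ from \eqref{eq:map_PW}: one has $\|\divergence v_h\|_T^2 = |\det F|^{-1}\|\widehat{\divergence}\hat v_h\|_{\hat T}^2$ and $\nabla v_h \circ \phi = \det(F)^{-1} F(\hat\nabla \hat v_h)F^{-1}$. The latter identity, applied to $\hat\Pi^{k-1}\hat\nabla\hat v_h$, yields an element of $\Poly^{k-1}(T,\mm)$ that is admissible as a competitor for $\Pi^{k-1}\nabla v_h$, and since $\|F\|\,\|F^{-1}\|\sim 1$ on quasi-uniform meshes, the corresponding $L^2$ norms on $T$ and $\hat T$ relate by the same $|\det F|^{-1}$ factor up to uniform constants. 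Combining these ingredients gives \eqref{eq:(I-Pi)grad}.

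\emph{The remaining two estimates follow by algebra.} For \eqref{eq:(I-Pi)curl}, a direct computation in both $d=2$ and $d=3$ shows $\kappa(\curl v_h) = (\nabla v_h - (\nabla v_h)^\trans)/2$, so $\kappa(\curl v_h)$ is exactly the skew-symmetric part of $\nabla v_h$; since $\Pi^{k-1}$ commutes with the transpose, $(\id - \Pi^{k-1})\kappa(\curl v_h) = \skw((\id - \Pi^{k-1})\nabla v_h)$ and the claim follows from \eqref{eq:(I-Pi)grad}. For \eqref{eq:eps-equiv-1}, the $\gtrsim$ direction is immediate from $\|\Pi^{k-1}M\|\le\|M\|$, $\|\Dev{M}\|\le\|M\|$, and $|\divergence v_h| = |\trace{\eps(v_h)}| \le \sqrt{d}\,|\eps(v_h)|$. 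For the $\lesssim$ direction, orthogonally decompose $\eps(v_h) = \Dev{\eps(v_h)} + d^{-1}(\divergence v_h)\id$ and further split $\Dev{\eps(v_h)}$ using $\Pi^{k-1}$; since $\Pi^{k-1}$ commutes with $\Dev{\cdot}$ and $|\Dev{M}|\le|M|$, the non-projected piece is controlled by $\|(\id-\Pi^{k-1})\nabla v_h\|_T$, and \eqref{eq:(I-Pi)grad} closes the argument. The main obstacle throughout is the scaling in \eqref{eq:(I-Pi)grad}, where one must carefully check that the Piola conjugation $F(\cdot)F^{-1}$ does not produce $h$-dependent constants; this works because $\|F\|\,\|F^{-1}\|$ is uniformly bounded under quasi-uniformity and affine composition preserves polynomial degree.
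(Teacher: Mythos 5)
Your proof is correct, but it takes a genuinely different route from the paper's. The paper's proof is constructive: it solves a small discrete problem \eqref{eq:divproblem} on the reference element to build $r \in \Poly^k(T,\rr^d) + x\Poly^k(T,\rr)$ with $\divergence(r) = \divergence(v_h)$ and $\|\nabla r\|_T \sim \|\divergence(r)\|_T$ (via the Euler identity \eqref{eq:ref-Euler}), writes $v_h = a + r$ with $a \in \Poly^k(T,\rr^d)$ by Lemma~\ref{lem::divfreeRT}, and then re-runs this decomposition separately for each of the three estimates. You instead prove \eqref{eq:(I-Pi)grad} directly by a kernel-containment and finite-dimensionality argument on $\RT^k(\widehat T)$, transfer it by the Piola map (correctly using the mapped projection only as a competitor for the best approximation $\Pi^{k-1}\nabla v_h$), and then derive \eqref{eq:(I-Pi)curl} and \eqref{eq:eps-equiv-1} by pure pointwise algebra, exploiting that $\kappa(\curl v_h) = \skw(\nabla v_h)$ and that $\Pi^{k-1}$ commutes with $\Dev{\cdot}$ and with symmetrization, both of which are Frobenius contractions. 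Your route is shorter and makes the logical dependence of the three estimates on a single scaling inequality transparent; the paper's route is more explicit (the constant traces back to the concrete Euler-identity equivalence) but both ultimately rest on norm equivalence on a finite-dimensional reference space, so neither is "more elementary" in substance. One small correction: the kernel of $\hat v \mapsto \|\widehat\divergence\,\hat v\|_{\widehat T}$ on $\RT^k(\widehat T)$ is not all of $\Poly^k(\widehat T,\rr^d)$ but only its divergence-free subspace; Lemma~\ref{lem::divfreeRT} gives the containment $\ker(\divergence) \subseteq \Poly^k(\widehat T,\rr^d)$, which is exactly what your argument needs (kernel of the right-hand seminorm contained in kernel of the left-hand one), so the proof stands once this is stated accurately.
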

\begin{proof}
  One side of the equivalence of~\eqref{eq:eps-equiv-1} is obvious by the continuity of the $\Pi^{k-1}$. For the other direction, we use the following equivalence on the reference element $\hat T$:
  \begin{equation}
    \label{eq:ref-Euler}
    \|\hat{\nabla}(\hat{q} \hat{x}) \|_{\hat T}
    \sim
    \| \hat{\divergence}( \hat{q}\hat{x}) \|_{\hat T},
    \qquad \text{ for all } \hat q \in \Poly^k(\hat T, \rr).
  \end{equation}
  This follows by  finite dimensionality,  because by the Euler identity if any one of the above two terms is zero, then $\hat q =0$ (see e.g., \cite{2016arXiv160903701L}).
  Consequently, given any $v_h \in V_h$,
  setting  $\hat{v}_h = \mathcal{P}^{-1} (v_h|_T)$, the
  following problem is uniquely solvable: find $\hat b \in \Poly^k(\hat T , \rr)$ such that
  \begin{align} \label{eq:divproblem}
    \int_{\hat T}
    \hat{\divergence}
    ( \hat x \hat b)
    \;
    \hat{\divergence}
    ( \hat x \hat q) \dx
    =
    \int_{\hat T} \
    \hat{\divergence}(\hat{v}_h)
    \;
    \hat{\divergence}( &\hat x \hat q) \dx,
    &\quad \text{ for all } \hat q \in \Poly^k(\hat T, \rr). 
  \end{align}
  Since $\hat{\divergence} (\hat x \Poly^k(\hat T, \rr)) = \Poly^k(\hat T, \rr)$,
  \eqref{eq:divproblem} implies that
  $ \hat{\divergence}
  ( \hat x \hat b) = \hat{\divergence} (\hat v_h)$.
  Put $r = \mathcal{P}^{-1}( \hat x \hat b)$. 
  Then, due to the properties of the Piola map $\mathcal{P}$,
  $r$ is a function in  $\Poly^k(T, \rr^d) + x \Poly^k(T, \rr)$ satisfying
  $\divergence (r) = \divergence (v_h)$ in $T$,
  and a scaling argument using~\eqref{eq:ref-Euler} implies
  \begin{equation}
    \label{eq:phy-Euler}
    \| \nabla r\|_{T}
    \sim
    \| {\divergence}( r) \|_{T}.
  \end{equation}

  Let $a = v_h-r \in \Poly^k(T, \rr^d) + x \Poly^k(T, \rr)$. Then $\divergence(a)=0$ and $v_h = a + r$ in $T$. In particular, the  former implies, by Lemma~\ref{lem::divfreeRT}, that $a \in \Poly^k(T, \RRR^d)$.
  Then we have
  \begin{align*}
    \| \eps(v_h)\|_T
    & = \|\eps(a + r) \|_T
    \lesssim \|\Dev{\eps(a+r)}\|_T + \|\divergence(v_h)\|_T
    \\
    & \le \|\Dev{\eps(a)}\|_T + \|\nabla r\|_T + \|\divergence(v_h)\|_T  \\
    & \lesssim \|\Dev{\eps(a)}\|_T + \|\divergence(v_h)\|_T
    && \text{ by~\eqref{eq:phy-Euler},}
    \\
    & = \|\Pi^{k-1}\Dev{\eps(a)}\|_T + \|\divergence(v_h)\|_T
    && \text{ since } a \in \Poly^k(T, \RRR^d),
    \\
    & \le \|\Pi^{k-1}\Dev{\eps(v_h)}\|_T
      + \|\Pi^{k-1}\Dev{\eps(r)}\|_T
      + \|\divergence(v_h)\|_T
    \\
    & \lesssim \|\Pi^{k-1}\Dev{\eps(v_h)}\|_T      
      + \|\divergence(v_h)\|_T,
    && \text{ again, by~\eqref{eq:phy-Euler}}.
  \end{align*}                                          
This proves~\eqref{eq:eps-equiv-1}.
    
To prove~\eqref{eq:(I-Pi)curl},
first note that due to the definition of $\kappa(\cdot),$ we have $\| \kappa(\curl v_h)\|_T \sim \| \curl(v_h) \|_T$. Thus, using the same decomposition as above,
namely, $v_h|_T = a + r$, 
  \begin{align*}
    \| (\id - \Pi^{k-1}) \kappa(\curl (v_h)) \|_T
    \le \| (\id - \Pi^{k-1})  \kappa (\curl(a)) \|_T
    +
    \| (\id - \Pi^{k-1}) \kappa (\curl(r)) \|_T.
  \end{align*}
  As $\curl (a) \in \Poly^{k-1}(T, \rr^\dt)$, the first term on the
  right  vanishes.
  The last term satisfies
  \begin{align*}
    \| (\id - \Pi^{k-1}) \kappa(\curl(r)) \|_T
    &\lesssim \| \curl(r) \|_T  \le \| \grad r \|_T  \lesssim
      \|\divergence(r)\|_T = \|\divergence(v_h)\|_T,
  \end{align*}
  due to~\eqref{eq:phy-Euler}. Hence~\eqref{eq:(I-Pi)curl} is proved.

  The proof of~\eqref{eq:(I-Pi)grad} uses the same technique:
  \[
    \| (\id - \Pi^{k-1}) \nabla v_h\|_T
    \le
    \| (\id - \Pi^{k-1}) \nabla a\|_T +     \| (\id - \Pi^{k-1}) \nabla r\|_T
    \le \| \nabla r\|_T \lesssim \| \divergence (v_h)\|_T,
  \]
  where we have used that 
  $a \in \Poly^k(T, \rr^d)$ and~\eqref{eq:phy-Euler}.
\end{proof}

\begin{remark}
  \label{rem:BDM1}
  The same technique shows that
  $ \| \nabla v_h\|_T^2 \sim \| \Pi^{k-1}[\Dev{\nabla v_h}] \|_T^2
  + \| \divergence (v_h) \|_T^2$
  for all Raviart-Thomas functions $v_h \in V_h$. The technique allows
  one to control the gradient of the highest order terms of a
  velocity $v_h$ in the Raviart-Thomas space
  by $\divergence(v_h)$. A similar estimate does {\em not}
  hold for $v_h$
  in
  $\BDM^{k+1}\!:= H_0(\divergence, \om) \cap \Poly^{k+1}(\mesh, \rr^d).$
\end{remark}

\begin{lemma} \label{lem::gradofSKW}
  For all $T \in \mesh$ and $\eta_h \in W_h$,
  \begin{align*}
    \| \grad \eta_h\|_T \sim \| \curl \eta_h \|_T.
  \end{align*}
\end{lemma}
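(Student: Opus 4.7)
The plan is to reduce the claim to a seminorm equivalence on the reference element $\hat T$ and transfer it by scaling. The direction $\|\curl \eta_h\|_T \lesssim \|\grad \eta_h\|_T$ is immediate since the row-wise curl is a linear combination of first partials of the entries of $\eta_h$, so I concentrate on the reverse estimate.

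First I would pull back via the affine map $\phi: \hat T \to T$, setting $\hat\eta := \eta_h \circ \phi \in \Poly^k(\hat T, \kk)$. By the chain rule together with $\|F\|_{\ell^\infty}\sim h$ and $|\det F|\sim h^d$, both $\|\grad \eta_h\|_T^2$ and $\|\curl \eta_h\|_T^2$ are equivalent to $h^{d-2}\|\hat\grad \hat\eta\|_{\hat T}^2$ and $h^{d-2}\|\hat\curl \hat\eta\|_{\hat T}^2$, respectively. Hence it suffices to prove $\|\hat\grad \hat\eta\|_{\hat T} \lesssim \|\hat\curl \hat\eta\|_{\hat T}$ on the finite-dimensional space $\Poly^k(\hat T, \kk)$, where both sides are seminorms. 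Finite-dimensional seminorm equivalence reduces this to showing the two seminorms have the same kernel; since the kernel of $\|\hat\grad\cdot\|$ is the space of constant skew matrices, which is evidently contained in the kernel of $\|\hat\curl\cdot\|$, the content is the reverse inclusion.

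To establish the reverse inclusion, I would use the bijection $\kappa:\rr^{\dt}\to\kk$ from \eqref{eq:kappa} to write $\hat\eta = \kappa(\hat w)$ with $\hat w \in \Poly^k(\hat T, \rr^\dt)$ and compute the row-wise curl explicitly. In $d=2$ a short calculation gives $\hat\curl \kappa(\hat w) = -\tfrac12\, \hat\nabla \hat w$, so vanishing of the left side forces $\hat w$, hence $\hat\eta$, to be constant. In $d=3$ a direct computation yields the identity
\begin{equation*}
\hat\curl \kappa(\hat w) \;=\; \tfrac12\bigl((\hat\div \hat w)\, \id - (\hat\nabla \hat w)^\trans\bigr).
\end{equation*}
Its vanishing forces the off-diagonal partials $\partial_i \hat w_j$ ($i\ne j$) to be zero and yields the diagonal relations $\hat\div \hat w = \partial_i \hat w_i$ for each $i$; summing these three relations gives $\hat\div \hat w = 0$, so that in fact $\partial_i \hat w_i = 0$ for each $i$. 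Therefore $\hat\nabla \hat w = 0$ and $\hat\eta$ is constant.

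The main obstacle is just the algebraic verification of the $d=3$ formula, but it is a short, direct computation from \eqref{eq:kappa}. Once it is in hand, the finite-dimensional seminorm equivalence on $\hat T$ together with the affine scaling returns the asserted equivalence on any element $T \in \mesh$ with mesh-independent constants.
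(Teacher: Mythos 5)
Your overall strategy --- reduce to the reference element by a scaling argument and then identify the kernels of the two seminorms on a finite-dimensional space --- is the same as the paper's, and your reference-element computation is correct: the identity $\hatcurl\,\kappa(\hat w)=\tfrac12\big((\hat\div\hat w)\,\id-(\hat\nabla\hat w)^\trans\big)$ and the deduction that its vanishing forces $\hat\nabla\hat w=0$ match the paper's kernel argument (the paper's displayed version of this identity differs by a constant factor, which is immaterial). The one step that is not justified as written is the transfer of the \emph{curl} seminorm to the reference element under the plain pullback $\hat\eta=\eta_h\circ\phi$. The chain rule does give $\|\grad\eta_h\|_T^2\sim h^{d-2}\|\hat\grad\hat\eta\|_{\hat T}^2$, but the row-wise curl does not commute with composition by a general affine map: for a generic $\tau\in\Poly^k(T,\mm)$ the two-sided equivalence $\|\curl\tau\|_T^2\sim h^{d-2}\|\hatcurl(\tau\circ\phi)\|_{\hat T}^2$ is false (a row-wise gradient has vanishing curl while its plain pullback generally does not), so ``by the chain rule'' only yields a one-sided bound here.

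For $\kk$-valued fields your scaling claim does happen to be true, but only because of the very identity you prove afterwards: both $\curl\kappa(w)$ and $\hatcurl\kappa(\hat w)$ are images of the pointwise gradient of the associated vector field under invertible linear maps on $\mm$ (namely $G\mapsto\tfrac12(\trace{G}\id-G^\trans)$ and its $F$-perturbed analogue, whose inverses are uniformly controlled by shape regularity), so the curl seminorm is pointwise equivalent to the full gradient seminorm, which does scale as you claim. To make the argument airtight you should either (i) establish the algebraic identity first and derive the scaling from it --- at which point the reference element is not needed at all, since the lemma becomes the pointwise statement $|\curl\eta_h|\sim|\grad\eta_h|$, valid for any smooth $\kk$-valued field --- or (ii) do as the paper does and pull back with the congruence map $\hat\eta_h=F^\trans(\eta_h\circ\phi)F$, the inverse of the vorticity map $\mathcal{W}$ in \eqref{eq:map_PW}, under which the curl transforms covariantly by \eqref{eq:curlFFt} and the scaling of both seminorms is immediate.
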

\begin{proof}
  The proof is based on a scaling argument and equivalence of norms on
  finite dimensional spaces on the reference element. Recall the map
  $\phi$ and $F = \phi'$. Calculations using the chain rule yield
  \begin{subequations}
    \label{eq:curlFFt}
    \begin{align}
      \label{eq:curlFFt-d3}
      \hatcurl \big[ F^\trans  (\eta_h\circ \phi ) F \big]
      &= F^\trans \big[ \curl( \eta_h) \circ \phi\big] F^{-\trans}
      \det F,
      && \text{ if } d =3,
      \\
      \label{eq:curlFFt-d2}
      \hatcurl \big[ F^\trans  (\eta_h\circ \phi ) F \big]
      & = F^\trans \big[ \curl( \eta_h) \circ \phi\big]
      \det F,
      && \text{ if } d =2.
    \end{align}
  \end{subequations}
  We continue with the $d=3$ case only (since $d=2$ case proceeds using
  \eqref{eq:curlFFt-d2} analogously).
  With $\hat{\eta}_h = F^\trans (\eta_h \circ \phi) F$, standard
  estimates for $F$ yield
  \begin{align}
    \label{eq:curl-map-bd}
                                \| \curl( \eta_h) \|^2_T \sim h^{-3} \| \hatcurl (\hat{\eta}_h) \|^2_{\hat{T}}.
  \end{align}
  Let $\hat v \in \Poly^k(\hat T, \rr^d)$
  and
  $v \in \Poly^k(T, \rr^d)$ be such that $\hat{\eta}_h = \kappa(\hat v)$
  and
   $\eta_h = \kappa(v)$,
   where $\kappa$ is as defined in~\eqref{eq:kappa}.
   Then, 
   \begin{align}
     \label{eq:grad-map-bd}
     \|\grad \eta_h\|_T^2 \sim     \| \nabla v \|^2_T
     &\sim
       h^{-3}
       \| \hat{\nabla} \hat{v} \|^2_{\hat{T}}
       \sim
       h^{-3} \|\hat{\grad}\hat \eta_h \|_{\hat T}^2
   \end{align}
   
   In view of~\eqref{eq:curl-map-bd} and~\eqref{eq:grad-map-bd},
   to complete the proof, it suffices to establish the reference element estimate 
   \begin{equation}
     \label{eq:ref-elem-grad-curl}
     \| \hatcurl (\kappa(\hat{v})) \|_{\hat{T}}
     \sim \| \hat{\nabla} \hat{v} \|_{\hat{T}}
   \end{equation}
   by proving that one side is zero if and only if  the other side is zero.
   Note these two identities:
   $\hatcurl\, \kappa(\hat{v})
   = (\hat\nabla \hat{v})^\trans - \hat\divergence(\hat{v}) \id,$ and
   $\hatcurl \kappa(\hat{v}): \id = -2 \divergence (\hat{v})$. 
   If  $\hatcurl\, \kappa(\hat{v}) = 0$, then the latter identity implies
   $\hat\divergence (\hat v) =0$, which when  used in the former identity, yields
   $\hat\nabla \hat v=0$. Combined with the obvious converse, we have established~\eqref{eq:ref-elem-grad-curl}.
\end{proof}

\begin{lemma} \label{lem::normequi}
  For all $T \in \mesh$ and $(v_h,\eta_h) \in U_h$,
  \begin{align*}
    \|  \eps(v_h) \|_{T}^2
    + \|  \kappa(\curl v_h) - \eta_h \|_{T}^2  
    \;\sim\;  \|  \Pi^{k-1}\Dev{\nabla v_h - \eta_h}\|_{T}^2 +
    h^2 \|  \curl(\eta_h) \|_{T}^2 +  \| \divergence(v_h) \|_T^2.
  \end{align*}
\end{lemma}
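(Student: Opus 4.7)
The starting point is the pointwise Frobenius-orthogonal decomposition
\[
\nabla v_h - \eta_h = \eps(v_h) + \bigl(\kappa(\curl v_h) - \eta_h\bigr),
\]
where the first summand is symmetric and the second skew. Taking norms and using that $\trace{\eta_h}=0$ (so $\trace{\nabla v_h - \eta_h} = \divergence v_h$) gives the two identities
\[
\|\nabla v_h - \eta_h\|_T^2 = \|\eps(v_h)\|_T^2 + \|\kappa(\curl v_h) - \eta_h\|_T^2,
\qquad
\|\Dev{\nabla v_h - \eta_h}\|_T^2 = \|\nabla v_h - \eta_h\|_T^2 - \tfrac{1}{d}\|\divergence v_h\|_T^2.
\]
Combined, $\|\eps(v_h)\|_T^2 + \|\kappa(\curl v_h) - \eta_h\|_T^2 = \|\Dev{\nabla v_h - \eta_h}\|_T^2 + \tfrac{1}{d}\|\divergence v_h\|_T^2$. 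The remainder of the proof is to match each side of this identity to the right-hand side of the claimed equivalence by removing, respectively adding, the projector $\Pi^{k-1}$ and an $h^2\|\curl\eta_h\|_T^2$ contribution.

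For the ``$\lesssim$'' direction, split $\Dev{\nabla v_h - \eta_h}$ into its $\Pi^{k-1}$-image and the orthogonal remainder. The remainder equals $(\id-\Pi^{k-1})(\nabla v_h - \eta_h - d^{-1}\divergence v_h\,\id)$, and I would bound each piece separately: $\|(\id-\Pi^{k-1})\nabla v_h\|_T \lesssim \|\divergence v_h\|_T$ directly from~\eqref{eq:(I-Pi)grad}; $\|(\id-\Pi^{k-1})(d^{-1}\divergence v_h\,\id)\|_T \lesssim \|\divergence v_h\|_T$ from boundedness of $\Pi^{k-1}$; and $\|(\id-\Pi^{k-1})\eta_h\|_T \lesssim h\,\|\nabla \eta_h\|_T \sim h\,\|\curl \eta_h\|_T$, where the first inequality is a standard Bramble--Hilbert estimate for $\eta_h\in\Poly^k(T,\kk)$ and the equivalence is Lemma~\ref{lem::gradofSKW}. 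Squaring and adding yields the desired bound.

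For the ``$\gtrsim$'' direction, the easy terms are $\|\Pi^{k-1}\Dev{\nabla v_h-\eta_h}\|_T^2\le \|\nabla v_h-\eta_h\|_T^2$ and $\|\divergence v_h\|_T^2 = \|\trace{\eps(v_h)}\|_T^2 \le d\,\|\eps(v_h)\|_T^2$. The key step, and the only nontrivial point, is controlling $h^2\|\curl\eta_h\|_T^2$. I would split
\[
h\|\curl \eta_h\|_T \le h\bigl\|\curl\bigl(\eta_h - \kappa(\curl v_h)\bigr)\bigr\|_T + h\bigl\|\curl \kappa(\curl v_h)\bigr\|_T.
\]
The first term is a polynomial inverse inequality, since $\eta_h-\kappa(\curl v_h)\in \Poly^k(T,\kk)$, and gives $\lesssim \|\eta_h-\kappa(\curl v_h)\|_T$. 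For the second term I would use the crucial identity
\[
\curl \kappa(\curl v_h) = -\curl \eps(v_h),
\]
obtained by applying the row-wise $\curl$ to $\nabla v_h = \eps(v_h) + \kappa(\curl v_h)$ and using that $\curl(\nabla v_{h,i})=0$ for each row. Since $\eps(v_h)$ is a polynomial of degree $\le k$ on $T$ (as $v_h\in\RT^k$), a second inverse inequality gives $h\|\curl\eps(v_h)\|_T \lesssim \|\eps(v_h)\|_T$. Squaring and combining finishes the reverse bound.

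\textbf{Main obstacle.} Essentially everything is algebraic or routine except the estimate of $h\|\curl\eta_h\|_T$: a naive inverse inequality would give only $\|\eta_h\|_T$, which cannot be controlled by $\|\eps(v_h)\|_T + \|\kappa(\curl v_h)-\eta_h\|_T$ in general. The resolution is the identity $\curl \kappa(\curl v_h) = -\curl \eps(v_h)$, which exploits that the curl of a gradient vanishes and trades a bound on the skew part $\kappa(\curl v_h)$ for a bound on the symmetric part $\eps(v_h)$ at the price of one derivative, exactly recovered by the factor $h$.
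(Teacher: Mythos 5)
Your proof is correct, and its skeleton (the Frobenius--orthogonal split $\nabla v_h-\eta_h=\eps(v_h)+[\kappa(\curl v_h)-\eta_h]$, Pythagoras, and the estimates of Lemma~\ref{lem:gradofRT}) matches the paper's. The one place where you genuinely diverge is the only nontrivial step, the bound on $h^2\|\curl\eta_h\|_T^2$ in the ``$\gtrsim$'' direction. The paper subtracts $\kappa(\curl\Pi^{\RM}v_h)$, whose curl vanishes because $\Pi^{\RM}v_h$ is a rigid displacement, applies an inverse inequality, and then invokes the Korn-type estimate~\eqref{eq::kornone} to convert $\|\curl(v_h-\Pi^{\RM}v_h)\|_T$ into $\|\eps(v_h)\|_T$. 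You instead subtract $\kappa(\curl v_h)$ itself and use the algebraic identity $\curl\kappa(\curl v_h)=-\curl\eps(v_h)$ (row-wise curl of a gradient vanishes), followed by a second inverse inequality on $\eps(v_h)\in\Poly^k(T,\mm)$. Your route is slightly more elementary: it needs no rigid-body projector and no appeal to~\eqref{eq::kornone}, only two inverse inequalities and the fact that both $\kappa(\curl v_h)$ and $\eps(v_h)$ are polynomials of degree at most $k$ on each Raviart--Thomas element (which is true, since components of $v_h|_T$ have degree at most $k+1$). The paper's choice has the mild advantage of reusing machinery ($\Pi^{\RM}$, \eqref{eq::kornone}) already introduced for Lemma~\ref{lem::normeqvel}. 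Your handling of the ``$\lesssim$'' direction --- bounding the full remainder $(\id-\Pi^{k-1})\Dev{\nabla v_h-\eta_h}$ via~\eqref{eq:(I-Pi)grad} rather than treating $\eps(v_h)$ and $\kappa(\curl v_h)-\eta_h$ separately as the paper does with~\eqref{eq:eps-equiv-1} and~\eqref{eq:(I-Pi)curl} --- is only a cosmetic variation. Both arguments are sound.
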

\begin{proof}
  Since the decomposition $\nabla v_h = \eps(v_h) +
  \kappa(\curl(v_h))$ is orthogonal in the Frobenius inner product, so
  is  $\nabla v_h - \eta_h = \eps(v_h) +
  [\kappa(\curl(v_h) - \eta_h].$ Application of the deviatoric and 
  $\Pi^{k-1}$ preserves this orthogonality.  Hence, by Pythagoras
  theorem,
  \begin{align} \label{eq::pythagoras}
    \big\|  \Pi^{k-1}\Dev{\nabla v_h - \eta_h}\big\|_{T}^2
    = \big\|  \Pi^{k-1}\Dev{ \eps(v_h)}\big\|_{T}^2
    + \big\| \Pi^{k-1}[\kappa(\curl(v_h)) - \eta_h]\big\|_{T}^2.
  \end{align}
  We shall now prove the result using~\eqref{eq::pythagoras} and
  Lemma~\ref{lem:gradofRT}.

  Proof of ``$\lesssim$'':  Since
  \begin{align*}
    \| \eps(v_h)\|_T^2
    & \lesssim \| \Pi^{k-1}\Dev{\eps(v_h)} \|_T^2 + \| \divergence (v_h) \|_T^2
    && \text{by Lemma~\ref{lem:gradofRT}},
    \\
    &\le \big\| \Pi^{k-1}\Dev{\nabla v_h      - \eta_h}
      \big\|_T^2
      + \| \divergence (v_h) \|_T^2
    && \text{by \eqref{eq::pythagoras}},
  \end{align*}
  it suffices to prove that
  \begin{equation}
    \label{eq:kappav-eta}
    \|  \kappa(\curl(v_h)) - \eta_h \|^2_{T}
    \lesssim  \big\| \Pi^{k-1}\Dev{\nabla v_h - \eta_h}
    + h^2 \| \curl(\eta_h) \|_T^2 + \| \divergence (v_h)\|_T^2,
  \end{equation}
  which we do next. Since the projection $r_1 =
   \Pi^{k-1}( \kappa(\curl(v_h))  - \eta_h)$ can be bounded
   using~\eqref{eq::pythagoras}, we focus on the remainder
   $r_2 = (\id - \Pi^{k-1}) (\kappa(\curl(v_h)) - \eta_h)$.
   \begin{align*}
     \| r_2\|_T^2
     & \le
       \|  (\id - \Pi^{k-1}) \kappa(\curl(v_h)) \|^2_{T} + \|  (\id -
       \Pi^{k-1}) \eta_h \|^2_{T}
     \\
     &\le
       \|   \divergence(v_h) \|^2_{T} + h^2  \|\grad \eta_h \|_T^2
     && \text{by~\eqref{eq:(I-Pi)curl}, Lemma~\ref{lem:gradofRT},}
     \\
     &\lesssim
       \|   \divergence(v_h) \|^2_{T} + h^2 \|  \curl(\eta_h)
       \|^2_{T}
       && \text{by Lemma~\ref{lem::gradofSKW}.}
   \end{align*}
   When this estimate for $r_2$ is used in 
   $\|  \kappa(\curl(v_h))  - \eta_h \|^2_{T}
   = \| r_1\|_T^2
    + \|  r_2 \|_T^2$ and $r_1$ is bounded
    using~\eqref{eq::pythagoras}, we obtain~\eqref{eq:kappav-eta}.

    Proof of ``$\gtrsim$'':  The last term of the lemma obviously satisfies
    $\| \divergence(v_h) \|_T^2 \lesssim \| \eps(v_h)\|_T^2$, while
    the first term satisfies 
    $    \|  \Pi^{k-1}\Dev{\nabla v_h - \eta_h}\|_{T}^2
    \le \|  \eps(v_h)\|_{T}^2 + \| \kappa(\curl(v_h)) - \eta_h\|_{T}^2
    $
    by \eqref{eq::pythagoras}. It remains to bound
    $h^2 \| \curl(\eta_h)\|_T^2$.
    As $\curl[\kappa(\curl(\Pi^{\RM}v_h))] = 0$, we obtain using an inverse inequality for polynomials 
  \begin{align*}
    h^2 \|  \curl \eta_h \|_{T}^2
    &=     h^2 \|  \curl( \eta_h - \kappa(\curl( \Pi^{\RM}v_h))) \|_{T}^2 
      \lesssim \|  \eta_h - \kappa(\curl\Pi^{\RM}v_h) \|_{T}^2
    \\
    &\le \|  \eta_h - \kappa(\curl(v_h))\|^2_T + \| \kappa(\curl(v_h)) - \kappa(\curl\Pi^{\RM}v_h) \|_{T}^2 \\
                                  &\sim \|  \eta_h -
                                    \kappa(\curl(v_h))\|^2_T + \|
                                    \curl(v_h- \Pi^{\RM}v_h) \|_{T}^2 \\
                                  &\lesssim \|  \eta_h - \kappa(\curl(v_h))\|^2_T + \| \eps(v_h) \|_T^2,
  \end{align*}
  where we used~\eqref{eq::kornone} in the last step.
\end{proof}

\begin{lemma} \label{lem::singlenorms}
  For any $v_h \in V_h$ and $\gamma_h \in W_h,$
  \begin{align}
    \label{eq:singlenorms-1}
    h\|\grad \gamma_h\|_h^2
    &\lesssim \inf\limits_{v_h \in V_h} \| (v_h, \gamma_h) \|_{U_h}
      \le
      \|\gamma_h\|^2,
    &
    \| v_h\|_{1,h,\eps} &= \inf\limits_{\eta_h \in W_h} \| (v_h, \eta_h) \|_{U_h}.
  \end{align}
  While the first estimate in~\eqref{eq:singlenorms-1} involves only
  the local constants from Lemmas~\ref{lem::gradofSKW}
  and~\ref{lem::normequi}, using the global constant $\ckorn,$ we also
  have
  \begin{align}
    \label{eq:singlenorms-2}
    (1+ \ckorn)^{-1}\|\gamma_h\|_h
    &\le \inf\limits_{v_h \in V_h} \| (v_h, \gamma_h) \|_{U_h}.
  \end{align}
\end{lemma}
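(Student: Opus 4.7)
The plan is to handle each of the four claims in turn, using throughout the fundamental identity
\[
\|(v_h, \eta_h)\|_{U_h}^2 = \|v_h\|_{1,h,\eps}^2 + \|\kappa(\curl v_h) - \eta_h\|_h^2,
\]
which is just the definition of $\|\cdot\|_{U_h}$.

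For the equality $\|v_h\|_{1,h,\eps} = \inf_{\eta_h \in W_h}\|(v_h, \eta_h)\|_{U_h}$, the bound ``$\le$'' is immediate from the identity above since the second term is non-negative. For ``$\ge$'' I would pick $\eta_h = \kappa(\curl v_h)$, which makes the second term vanish. The subtle point (and the only genuine content here) is that this $\eta_h$ actually lies in $W_h$: since $v_h \in V_h \subset \RT^k$, we have $v_h|_T \in \Poly^k(T, \rr^d) + x\,\Poly^k(T, \rr)$, so $\curl v_h$ is element-wise polynomial of degree at most $k$, and hence $\kappa(\curl v_h) \in \Poly^k(\mesh, \kk) = W_h$.

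For the chain $h^2\|\grad \gamma_h\|_h^2 \lesssim \inf_{v_h \in V_h}\|(v_h, \gamma_h)\|_{U_h}^2 \le \|\gamma_h\|^2$, the upper bound follows from the choice $v_h = 0 \in V_h$, for which $\|(0, \gamma_h)\|_{U_h}^2 = \|\gamma_h\|_h^2$. For the lower bound, I fix an arbitrary $v_h \in V_h$ and apply Lemma~\ref{lem::normequi} with $\eta_h = \gamma_h$ on each element; dropping the (non-negative) deviatoric and divergence terms on the right yields $h^2\|\curl \gamma_h\|_T^2 \lesssim \|\eps(v_h)\|_T^2 + \|\kappa(\curl v_h) - \gamma_h\|_T^2$. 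Summing over $T \in \mesh$ bounds the right side by $\|(v_h, \gamma_h)\|_{U_h}^2$, and Lemma~\ref{lem::gradofSKW} replaces $\curl$ by $\grad$ on the left; since $v_h$ was arbitrary, the estimate passes to the infimum.

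The only step requiring a global tool is $(1+\ckorn)^{-1}\|\gamma_h\|_h \le \inf_{v_h}\|(v_h, \gamma_h)\|_{U_h}$. I would use the triangle inequality
\[
\|\gamma_h\|_h \le \|\gamma_h - \kappa(\curl v_h)\|_h + \|\kappa(\curl v_h)\|_h;
\]
the first term is controlled directly by $\|(v_h, \gamma_h)\|_{U_h}$, while the second is dominated by $\|\grad v_h\|_h$ via the pointwise bound $|\kappa(\curl v_h)| \lesssim |\grad v_h|$. Then the global discrete Korn inequality~\eqref{eq:discrete_Korn} bounds $\|\grad v_h\|_h$ in terms of $\|v_h\|_{1,h,\eps}$ (up to $\ckorn$), using $\|\Pi_F^1\jump{v_h}\|_F \le \|\jump{(v_h)_\tangential}\|_F$, which is legitimate because $\jump{(v_h)_n} = 0$ across interior facets for $v_h \in V_h \subset H_0(\divergence, \om)$; this in turn is $\le \|(v_h, \gamma_h)\|_{U_h}$. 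Combining these and inverting gives the claim. The plan presents no real obstacle: each inequality reduces directly to a previously-proved lemma, and the only non-routine observation is the polynomial-degree check $\kappa(\curl V_h) \subseteq W_h$ that underlies the first identity.
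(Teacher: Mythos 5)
Your proof is correct and follows essentially the same route as the paper's: $\eta_h=\kappa(\curl v_h)$ for the equality, $v_h=0$ for the upper bound, Lemmas~\ref{lem::normequi} and~\ref{lem::gradofSKW} for the lower bound, and the triangle inequality plus the discrete Korn inequality~\eqref{eq:discrete_Korn} for~\eqref{eq:singlenorms-2}. Your explicit check that $\kappa(\curl v_h)\in W_h$ (via the degree count for $\curl$ of Raviart--Thomas functions) is a worthwhile detail the paper leaves implicit, and your squared form of the first chain is the homogeneity-consistent version of what the paper actually proves.
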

\begin{proof}
  To prove the first estimate of~\eqref{eq:singlenorms-1},
  \begin{align*}
    \| (v_h, \gamma_h) \|
    & \ge \| \eps(v_h) \|_h^2 + \| \kappa(\curl v_h) - \gamma_h\|_h^2
      \gtrsim h^2 \| \curl \gamma_h\|_h^2
    && \text{ by Lemma~\ref{lem::normequi}}
    \\
    & \gtrsim h^2 \| \grad \gamma_h \|_h^2
      && \text{ by Lemma~\ref{lem::gradofSKW}.}
  \end{align*}
  Taking infimum over $v_h \in V_h$, we obtain the lower estimate
  of~\eqref{eq:singlenorms-1}. The upper bound of the first infimum
  obviously follows by choosing $v_h=0$.

  To prove the equality in~\eqref{eq:singlenorms-1}, observe that the
  infimum over $\eta_h \in W_h$ cannot be larger than
  $\| v_h \|_{1,h,\eps}$ because we may choose
  $\eta_h = \kappa(\curl v_h)$.  The reverse inequality also holds
  since $\| (v_h, \eta_h) \|_{U_h} \ge \| v_h\|_{1,h,\eps}$ for any
  $\eta_h\in W_h$, so the equality must hold.

  Finally, to prove~\eqref{eq:singlenorms-2}, we use triangle
  inequality to get
  \[
    \|\eta_h\|
    \le \| \kappa(\curl v_h) - \eta_h \|_h + \| \curl    v_h\|_h
    \le \| (v_h, \eta_h)\|_{U_h} + \|\grad v_h \|_h.
  \]
  Applying the Korn inequality~\eqref{eq:discrete_Korn} and noting that
  the jump of the normal components are zero for functions in 
  $v_h \in H_0(\div, \om)$, the proof is complete.
\end{proof}

\subsection{Stability analysis}

The next three lemmas lead us to a discrete inf-sup condition. 

\begin{lemma}
  \label{lem:bubble-equiv}
  Let $\mu \in \Poly^k(T, \mm)$ for some $T\in \mathcal{T}_h$
  and $\tau = (\det F) \Dev{\curl(\curl (\mu) B)}$.  Then for $d=3,
  2$, 
  \[
    \| \tau \|_T\sim h^{3-d}\| \curl (\mu)\|_T.  
  \]  
\end{lemma}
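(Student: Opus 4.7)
The plan is to reduce the estimate to an equivalence of norms on the reference element by a standard scaling argument. Set $\hat\mu \in \Poly^k(\hat T, \mm)$ to be the natural pullback of $\mu$ (e.g., $\hat\mu = \mu \circ \phi$, or a Piola-type pullback chosen so that the chain rule for $\curl$ gives the cleanest transformation), and set $\hat\tau := \Dev{\hatcurl(\hatcurl(\hat\mu)\hat B)}$. Using the identity $B = F^{-\trans}(\hat B \circ \phi^{-1})F^{-1}$ from~\eqref{eq:bubbletrafo}, together with the appropriate transformation law for row-wise curl (analogous to~\eqref{eq:curlFFt}, but applied here to matrix-valued functions and in both $d=2$ and $d=3$), I would express the physical quantity $\tau = (\det F)\Dev{\curl(\curl(\mu)B)}$ as a linear transformation of $\hat\tau \circ \phi^{-1}$, with the prefactor $\det F$ inserted so that the transformation matches the $\covariantpiola$-type map with $L^2$-equivalence factors $h^{d/2}$ on both sides.

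Next, I would carry out two scaling computations. Using $\|F\|_{\ell^\infty}\sim h$, $\|F^{-1}\|_{\ell^\infty}\sim h^{-1}$, and $|\det F|\sim h^d$, change of variables gives $\|\tau\|_T \sim h^{a}\|\hat\tau\|_{\hat T}$ and $\|\curl(\mu)\|_T \sim h^{b}\|\hatcurl(\hat\mu)\|_{\hat T}$ for explicit exponents $a,b$ depending on $d$; the arithmetic should yield $a-b = 3-d$, matching the claim.

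The final and key ingredient is the reference-element equivalence
\[
\|\hat\tau\|_{\hat T} \;\sim\; \|\hatcurl(\hat\mu)\|_{\hat T},
\]
which I would prove by finite-dimensional norm equivalence on the space $\Poly^k(\hat T, \mm)$ modulo the kernel of $\hatcurl$. The nontrivial direction is that $\hat\tau = 0$ forces $\hatcurl(\hat\mu) = 0$. Writing $\hat q = \hatcurl(\hat\mu)$, this amounts to the injectivity of the map $\hat q \mapsto \Dev{\hatcurl(\hat q \,\hat B)}$ on the image of $\hatcurl$ acting on $\Poly^k(\hat T, \mm)$. This is precisely the property for which the matrix bubble $\hat B$ was designed in~\cite{MR2629995,GopalGuzma12}: Lemma~\ref{lem::bubbletransform} gives that $\curl(\hat q \hat B)$ has vanishing normal trace, and the explicit structure of $\hat B$ (the sum of $\lambda_{i-3}\lambda_{i-2}\lambda_{i-1}\nabla\lambda_i\otimes\nabla\lambda_i$ in 3D, or the scalar triple bubble in 2D) is exactly what is needed to rule out nontrivial kernel elements in the image of $\hatcurl$.

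The main obstacle is the first step: tracking how two successive applications of row-wise $\curl$ interact with the pullback of $B$ and the Piola-like map used for $\mu$, which requires the correct matrix-valued analogue of the chain-rule identities~\eqref{eq:curlFFt} and differs in detail between $d=2$ (where the outer curl is the matrix-valued version~\eqref{eq:curl-vec2mat-2d}) and $d=3$. Once the pullback identity is pinned down, the scaling bookkeeping is routine, and the reference-element injectivity is a direct consequence of the matrix bubble construction.
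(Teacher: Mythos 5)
Your overall architecture matches the paper's proof exactly: pull back to the reference element (the paper uses $\hat\mu = F^\trans(\mu\circ\phi)F$ and shows $\tau = \MM(\hat\tau)$ via \eqref{eq:bubbletrafo} and \eqref{eq:curlFFt}), invoke finite-dimensional norm equivalence there, and finish with scaling bookkeeping that produces the exponent $3-d$. The chain-rule computation you flag as the ``main obstacle'' is indeed tedious but routine once the covariant map for $\hat\mu$ is fixed.

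The genuine gap is in the step you treat as a known property of the matrix bubble: the implication $\hat\tau=0 \Rightarrow \hatcurl(\hat\mu)=0$. Because $\hat\tau$ is the \emph{deviatoric} of $\hatcurl(\hatcurl(\hat\mu)\hat B)$, vanishing of $\hat\tau$ only gives $\hatcurl(\hatcurl(\hat\mu)\hat B) = s\,\id$ for some scalar function $s$, and the elasticity literature's bubble lemmas do not address this trace part --- the deviatoric is specific to the trace-free stress space used here. One must first kill $s$: take the row-wise divergence of both sides (which annihilates the curl) to get $\nabla s = 0$, so $s$ is constant; then use the vanishing normal trace of $\curl(q\hat B)$ on $\partial \hat T$ (Lemma~\ref{lem::bubbletransform}) to conclude $s\,n = 0$ on each facet, hence $s=0$. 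Only after that does one have $\hatcurl(\hatcurl(\hat\mu)\hat B)=0$, whence integration by parts (the boundary term vanishes by the tangential-trace part of Lemma~\ref{lem::bubbletransform}) gives $(\hatcurl(\hat\mu)\hat B,\, \hatcurl(\hat\mu))_{\hat T}=0$, and the positivity property of the matrix bubble (\cite[Lemma~2.2]{MR2629995}) finally yields $\hatcurl(\hat\mu)=0$. Your appeal to ``the property for which $\hat B$ was designed'' covers only this last step; without the divergence-plus-normal-trace argument for the scalar $s$, the injectivity claim does not follow, and this is the one genuinely non-routine part of the lemma.
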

\begin{proof}
  If $\curl \mu =0$, then obviously $\tau =0$. We claim that the
  converse is also true. Indeed, if $\tau=0$, then
  putting $s = d^{-1} \trace{\curl(\curl(\mu)B)}$, we have
  \begin{equation}
    \label{eq:curlcurl-t}
    \curl (\curl (\mu) B) = s  \,\id.    
  \end{equation}
  Taking divergence on both sides, we find that $\nabla s = 0$, so $s$
  must be a constant on $T$. Then, taking normal components of both
  sides of~\eqref{eq:curlcurl-t} on each
  facet, we find that $sn=0$, so $s=0$. Hence $\curl (\curl (\mu) B)=0$, which in turn
  implies that $0 = (\curl (\curl (\mu) B, \mu)_T = (\curl
  (\mu) B, \curl(\mu))_T=0$. Therefore, by \cite[Lemma~2.2]{MR2629995},
  $\curl(\mu)=0$. 

  Applying this on the reference element $\hat T$ for
  $\hat\mu = F^\trans (\mu \circ \phi) F \in \Poly^k(T, \mm) $ and 
  $\hat\tau = \Dev{\hatcurl(\hatcurl (\hat \mu) \hat B)}$ where
  $\hat{B}$ is in Remark~\ref{rem:implB}, by finite
  dimensionality, we have
  \begin{equation}
    \label{eq:1tau-mu}
    \| \hat{\tau} \|_{\hat T} \sim \| \hatcurl (\hat\mu) \|_{\hat T}.
  \end{equation}
  We will now show that $\tau = (\det F) \Dev{\curl(\curl ( \mu) B)}$ is
  related to $\hat \tau$ by 
  \begin{equation}
    \label{eq:Mtau}
    \tau  = \MM (\hat \tau).
  \end{equation}
  By the definition of $\MM$,
  \begin{align*}
    (\det F)\,\MM(\hat \tau) \circ \phi
    & = F^{-\trans}
      \Dev{
      \hatcurl
      (\hatcurl(\hat\mu) \hat{B})}
      F^\trans
      =\Dev{
    F^{-\trans}
    \hatcurl(\hatcurl(\hat\mu) \hat{B})
    F^\trans}
  \end{align*}
  as  trace is preserved under similarity transformations. Focusing on
  the part of the last term inside the deviatoric, in the $d=3$ case, 
  \begin{align*}
    F^{-\trans}
    &     \hatcurl(\hatcurl(\hat\mu) \hat{B})
    F^\trans
      =
      F^{-\trans}\hatcurl\big[\hatcurl
      (  F^\trans{(\mu\circ \phi)}F) \, F^\trans {(B\circ\phi)} F\big]
      F^\trans
    &&\text{by~\eqref{eq:bubbletrafo},}
    \\
    &= F^{-\trans}\hatcurl
      \big[F^\trans [\curl(\mu) \circ \phi]\,
      F^{-\trans} (\det{F}) \,F^\trans {(B\circ\phi)} F\big] F^\trans
      && \text{by~\eqref{eq:curlFFt},}
    \\
    &= (\det{F}) F^{-\trans} \hatcurl
      \big[ F^\trans[\curl(\mu) B]  \circ\phi\, F \big]
      F^\trans \\
    &= (\det{F})^2 F^{-\trans}F^\trans
      \big[\curl  (\curl(\mu) B) \circ\phi
      \big]F^{-\trans} F^\trans
    && \text{by~\eqref{eq:curlFFt}.}
  \end{align*}
  This proves that 
  \[
    F^{-\trans}
    \hatcurl(\hatcurl    (\hat\mu) \hat{B})
    F^\trans
    = (\det{F})^2 \curl(\curl(\mu) B) \circ\phi
  \]
  when $d=3$.
  The same identity holds in the $d=2$ case: the argument is similar 
  after changing the definitions of the curls and the mapping of $B$
  appropriately.
  Thus, 
  $\MM(\hat \tau) \circ \phi =
  (\det{F}) \Dev{\curl(\curl(\mu) B)} \circ\phi$
  and~\eqref{eq:Mtau} is proved.

  Finally, the result follows from~\eqref{eq:Mtau} by scaling
  arguments: indeed~\eqref{eq:1tau-mu} implies,
  by~\eqref{eq::sigmascaling} and~\eqref{eq:curlFFt} that
  \begin{alignat*}{3}
    h^3 \| \tau \|_T^2  & \sim h^3 \| \curl \mu \|_{T}^2
    && \quad\text{ if } d=3,
    \\
    h^2 \| \tau\|_T^2 & \sim h^4 \| \curl \mu\|_T^2
    && \quad\text{ if } d=2,
  \end{alignat*}
  from which the result follows.
\end{proof}

\begin{lemma} \label{lem::lbbone}
  For any  $\gamma_h \in W_h$, there is a $\tau_h \in \Sigmaplus$ such that
  \begin{align}
    \label{eq:tau-gamma}
    (\tau_h, \gamma_h)_\Omega
    \,\gtrsim \,h \| \curl \gamma_h \|_h\,
    \| \tau_h \|.
  \end{align}
  Furthermore, for any $v_h \in V_h,$ the same $\gamma_h, \tau_h$ pair
  satisfies
  \begin{align}
    \label{eq:b2-bdd-below}
    b_2(\tau_h, (v_h, \gamma_h)) \gtrsim
    \bigg[
    h \| \curl (\gamma_h) \|_h - \| \divergence(v_h) \|_h
    \bigg]  \| \tau_h \|.
  \end{align}
\end{lemma}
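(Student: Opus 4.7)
The plan is to build $\tau_h$ on each element as a matrix-bubble-weighted double curl of $\gamma_h$, and then read both bounds off Lemma~\ref{lem:bubble-equiv} together with the integration-by-parts identity in its proof. On each $T \in \mesh$ set
\[
  \tau_h|_T := (\det F)\,\Dev{\curl(\curl(\gamma_h)\,B)},
\]
where $B$ is the matrix bubble and $F = \phi'$. The decomposition $\gamma_h = \Pi^{k-1}\gamma_h + \Pi^k_\perp\gamma_h$ writes $\tau_h|_T$ as the sum of a $\Sigma_h$-piece and a $\delta\Sigma_h$-piece, so $\tau_h \in \Sigma_h^+$. Applying Lemma~\ref{lem::bubbletransform} pointwise with $q=\curl(\gamma_h(x))$ at each $x\in\partial T$ shows that $\curl(\curl(\gamma_h)B)$ has vanishing normal trace on $\partial T$, forcing $(\tau_h)_{nt}=0$ on every facet; in particular $\|\tau_h\|^2 \sim \sum_T \|\tau_h\|_T^2$ by~\eqref{eq::sigmanormequi}.

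For~\eqref{eq:tau-gamma}, Lemma~\ref{lem:bubble-equiv} yields $\|\tau_h\|_T \sim h^{3-d}\|\curl\gamma_h\|_T$. Trace-freeness of $\gamma_h$ lets the outer $\Dev$ be dropped, and the same single integration by parts used in the proof of Lemma~\ref{lem:bubble-equiv} --- with the boundary term vanishing by Lemma~\ref{lem::bubbletransform} --- gives
\[
  (\tau_h,\gamma_h)_T
  = (\det F)\,(\curl(\gamma_h)\,B,\,\curl(\gamma_h))_T.
\]
Positivity of the bubble-weighted inner product $(qB,q)_T$ on matrix polynomials \cite[Lemma~2.2]{MR2629995}, combined with a reference-element scaling argument in the spirit of Lemma~\ref{lem:bubble-equiv}, implies $(\det F)(\curl\gamma_h B,\curl\gamma_h)_T \sim h^{4-d}\|\curl\gamma_h\|_T^2 \sim h\,\|\tau_h\|_T\|\curl\gamma_h\|_T$. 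Summing over $T$ and applying Cauchy--Schwarz delivers~\eqref{eq:tau-gamma}.

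For~\eqref{eq:b2-bdd-below}, substituting $(\tau_h)_{nt}=0$ into~\eqref{eq::blftwoequitwo} gives
\[
  b_2(\tau_h,(v_h,\gamma_h)) = -\int_\Omega \tau_h : \nabla v_h + (\tau_h,\gamma_h)_\Omega,
\]
so that in view of the bound already established for $(\tau_h,\gamma_h)_\Omega$ it suffices to show $|\int_\Omega \tau_h:\nabla v_h| \lesssim \|\tau_h\|\,\|\div v_h\|_h$. Split $\nabla v_h = \Pi^{k-1}\nabla v_h + (\id-\Pi^{k-1})\nabla v_h$ on each $T$; the high-frequency contribution is bounded by Cauchy--Schwarz and~\eqref{eq:(I-Pi)grad}. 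For the low-frequency part, a double integration by parts (with the inner surface contribution vanishing because the rows of $\gamma_h B$ are parallel to $n$ on $\partial T$) transforms the bulk integral into one against $\curl\curl(\Dev\Pi^{k-1}\nabla v_h) = -d^{-1}\curl\curl(\div v_h\,\id)$, whose norm is $\lesssim h^{-2}\|\div v_h\|_T$ by an inverse inequality on $\div v_h$; the Poincar\'e-type bound $\|\Pi^k_\perp\gamma_h\|_T \lesssim h\|\curl\gamma_h\|_T$ on the leading part of $\gamma_h$, together with handling the lower-order part through the $\Sigma_h$ summand via a direct orthogonality in the spirit of Stenberg, yields the required $\|\tau_h\|\,\|\div v_h\|_h$ estimate. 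The main technical hurdle is controlling the residual outer surface contribution from this double IBP: making it scale as $\|\div v_h\|$ rather than the larger $\|\nabla v_h\|$ requires careful use of both the $\Poly^k_\perp$ structure of $\delta\Sigma_h$ and the $\RT^k$ structure of $v_h$.
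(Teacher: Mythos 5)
Your construction of $\tau_h$ and your proof of \eqref{eq:tau-gamma} match the paper: same bubble-weighted double curl, same single integration by parts reducing $(\tau_h,\gamma_h)_T$ to $(\det F)(\curl(\gamma_h)B,\curl(\gamma_h))_T$, and the same positivity-plus-scaling conclusion. That half is fine.

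The second half, however, has a genuine gap. The whole point of the paper's argument for \eqref{eq:b2-bdd-below} is an \emph{exact} orthogonality that you miss: writing
$\tau_h|_T = (\det F)\bigl[\curl(\curl(\gamma_h)B) - d^{-1}\trace{\curl(\curl(\gamma_h)B)}\,\id\bigr]$,
the first (undeviatorized) piece satisfies $(\curl(\curl(\gamma_h)B),\nabla v_h)_T = 0$ identically --- one integration by parts gives a volume term containing $\curl(\nabla v_h)=0$ and a boundary term that vanishes by Lemma~\ref{lem::bubbletransform}. Hence only the trace-correction term survives, and it pairs with $\nabla v_h$ only through $\id:\nabla v_h=\divergence(v_h)$; Cauchy--Schwarz and one inverse inequality then give $|(\tau_h,\nabla v_h)_T|\lesssim \|\tau_h\|_T\|\divergence(v_h)\|_T$ with no splitting of $\nabla v_h$ at all. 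Your route instead splits off $\Pi^{k-1}\nabla v_h$ and attempts a double integration by parts against it. This breaks down in two places. First, the identity you invoke, $\curl\curl(\Dev{\Pi^{k-1}\nabla v_h}) = -d^{-1}\curl\curl(\divergence(v_h)\,\id)$, is false: it relies on $\curl(\nabla v_h)=0$, but $\curl$ does not commute with $\Pi^{k-1}$, so $\curl(\Pi^{k-1}\nabla v_h)\neq 0$ in general. Second, you yourself flag that the outer boundary terms produced by the double integration by parts are uncontrolled (``the main technical hurdle''), and the appeal to ``a direct orthogonality in the spirit of Stenberg'' for the lower-order part is not an argument. As written, the estimate $|\int_\Omega\tau_h:\nabla v_h|\lesssim\|\tau_h\|\,\|\divergence(v_h)\|_h$ is asserted rather than proved, and the proposed machinery would not deliver it. The fix is to abandon the projection splitting and the double integration by parts entirely and use the single-integration-by-parts orthogonality of $\curl(\curl(\gamma_h)B)$ against gradients described above.
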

\begin{proof}
  Given a $\gamma_h \in W_h$, set $\tau_h$ element by element by
  \[
    \tau_h|_T = (\det F) \,\Dev{ \curl(\curl(\gamma_h|_T)B) }.
  \]
  Clearly,
  $\Dev{ \curl(\curl( \Pi^{k-1}\gamma_h)B) }$ is in $\Sigma_h.$
  Since $\Dev{ \curl(\curl( \gamma_h- \Pi^{k-1}\gamma_h)B) }$ is in
  $\delta\Sigma_h,$ we conclude that $\tau_h \in \Sigma_h^+$.
  Since $\gamma_h$ is trace-free,
  $ (\tau_h, \gamma_h)_T = (\curl(\curl(\gamma_h|_T)B), \gamma_h)_T$
  \,$\det F,$ which in turn implies, after integrating by parts and
  applying Lemma \ref{lem::bubbletransform},
  $  (\tau_h, \gamma_h)_T = (\curl(\gamma_h){B}, \curl
  \gamma_h)_T$\,$\det F$.

  In the $d=3$ case, this yields
  \begin{equation}
    \label{eq:sumB}
    (\tau_h, \gamma_h)_T
    = \det F \, \int_T \sum_{i=0}^3 \lambda_{i-3} \lambda_{i-2}\lambda_{i-1}
      |\curl(\gamma_h) \nabla \lambda_i|^2\dx
  \end{equation}
  Noting that  $\grad \lambda_i = -n_i/h_i$, where $h_i$ is the distance
  from the $i$th vertex to the facet of the simplex opposite to it,
  and that the $\ell^2$-norm of any matrix $m \in \mm$ is equivalent to
  the sum of $\ell^2$-norms of $m n_i$,
  a local scaling argument with $m = \curl(\gamma_h)$
  and~\eqref{eq:sumB} imply
  \[
    (\tau_h, \gamma_h)_T \gtrsim (\det F) h^{-2} \| \curl (\gamma_h)\|_T^2.
  \]
  Therefore, 
  $    (\tau_h, \gamma_h)_\om \gtrsim h \| \curl (\gamma_h)\|_h^2
  \gtrsim h \| \curl (\gamma_h)\|_h\,\| \tau_h\|$,
  by Lemma~\ref{lem:bubble-equiv}. This proves~\eqref{eq:tau-gamma} in
  the $d=3$ case.
  In the $d=2$ case, the analogue of~\eqref{eq:sumB} gives
  $
    (\tau_h, \gamma_h)_T \gtrsim $ $(\det F)$  $ \| \curl
    (\gamma_h)\|_T^2$
    $\gtrsim h^2    \| \curl    (\gamma_h)\|_T^2
    \ge h    \| \curl    (\gamma_h)\|_T \,\| \tau_h\|,
  $
  where we have used Lemma~\ref{lem:bubble-equiv} again. This
  completes the proof  of~\eqref{eq:tau-gamma}.

  To prove~\eqref{eq:b2-bdd-below},  we use~\eqref{eq::blftwoequitwo}.
  The last sum in
  \begin{align*}
    \blftwo(\tau_h, ( v_h,\gamma_h))
    &=
      -\sum\limits_{T \in \mesh} \int_T \tau_h : (\nabla v_h - \gamma_h) \dx + \sum\limits_{F \in \facets} \int_F (\tau_h)_{nt} \cdot \jump{(v_h)_t} \ds
  \end{align*}
  vanishes due to Lemma~\ref{lem::deltaspacenormtang}. Hence
  by~\eqref{eq:tau-gamma},
  \begin{align}
    \label{eq:2}
    \blftwo(\tau_h, ( v_h,\gamma_h))
    & \,\gtrsim \,h \| \curl \gamma_h\|_h\, \| \tau_h \|
      -\sum\limits_{T \in \mesh}
      (\tau_h, \nabla v_h)_T.
  \end{align}
  To handle the last term, note that 
  \begin{align*}
    \frac{1}{\det F} (\tau_h, \nabla v_h)_T
    & =    (\curl(\curl (\gamma_h)B), \nabla v_h)_T
      -    (d^{-1} \trace {\curl(\curl (\gamma_h)B)} \id, \nabla v_h)_T
    \\
    & = -  (d^{-1}  \trace {\curl(\curl (\gamma_h)B)}, \divergence( v_h))_T
  \end{align*}
  because
  $(\curl(\curl (\gamma_h)B), \nabla v_h)_T=0$. This follows by  integrating
  one of the curls by parts, observing that 
  the resulting volume term
  is zero (since $\curl (\nabla v_h) =0$) and so is the resulting boundary term 
  (due to  Lemma~\ref{lem::bubbletransform}).
  Continuing, we apply Cauchy-Schwarz inequality and an inverse
  inequality to get 
  \begin{align*}
    |(\tau_h, \nabla v_h)_T|
    & \lesssim
    |\det F| h^{-1} \| B \|_{L^\infty(T)}\|\curl (\gamma_h)\|_T
      \|\divergence( v_h)\|_T
    \\
    & \lesssim \|\tau_h\|_T      \|\divergence( v_h)\|_T
  \end{align*}
  by Lemma~\ref{lem:bubble-equiv}. 
  Returning to~\eqref{eq:2} and using this estimate, the proof is complete.
\end{proof}

\begin{remark}
  The message of Lemmas~\ref{lem:bubble-equiv} and~\ref{lem::lbbone}
  is that it is possible to choose a $\tau_h$ in the form of a
  deviatoric of a curl of a bubble to bound (from below) the term
  arising from the weak symmetry constraint. If $\tau_h$ was just a
  curl, it would not be seen by the equilibrium equation and the bound
  in~\eqref{eq:b2-bdd-below} would not have the
  $\|\divergence(v_h)\|$-term, but our $\tau_h$ is a 
  deviatoric (of a curl), thus necessitating this term.  
\end{remark}

\begin{lemma} \label{lem::lbbtwo}
  For any  $(v_h, \gamma_h) \in U_h,$ there is a $\tau_h
  \in \Sigma_h$ such that 
  \begin{align*}
    b_2(\tau_h, (v_h, \gamma_h))\gtrsim \| (v_h, \gamma_h) \|_{U_h,*}
    \| \tau_h \|.
  \end{align*}
\end{lemma}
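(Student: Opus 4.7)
The plan is to construct a $\tau_h \in \Sigma_h$ explicitly, in the spirit of Stenberg's classical stability argument, by prescribing its unisolvent degrees of freedom from Theorem~\ref{theo::unisolvent}. Given $(v_h, \gamma_h) \in U_h$, I would specify $\tau_h$ element by element, setting its facet d.o.f.s so that
\[
(\tau_h)_{nt}|_F = h^{-1}\,\Pi^1_F\jump{(v_h)_t} \qquad\text{on each } F \in \facets.
\]
This value is single-valued across interfaces (it depends only on the jump of $v_h$), hence $\jump{(\tau_h)_{nt}}=0$ and $\tau_h \in \Sigma_h$. Its interior d.o.f.s are to be chosen so that
\[
\int_T \tau_h : \varsigma \dx = -\int_T \Pi^{k-1}_T\Dev{\nabla v_h - \gamma_h} : \varsigma \dx \qquad\text{for all } \varsigma \in \Poly^{k-1}(T, \dd),
\]
which fixes $\Pi^{k-1}\tau_h = -\Pi^{k-1}\Dev{\nabla v_h - \gamma_h}$. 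Theorem~\ref{theo::unisolvent} guarantees that $\tau_h$ is uniquely determined.

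Next I would establish the bound $\|\tau_h\| \lesssim \|(v_h, \gamma_h)\|_{U_h,*}$ by a scaling argument through the mapping $\MM$ from~\eqref{eq:map_M}. On the reference element, unisolvence and finite dimensionality yield the equivalence
\[
\|\hat \tau_h\|_{\Tref}^2 \sim \sum_{\hat F \subset \partial \Tref}\|(\hat \tau_h)_{\hat n \hat t}\|_{\hat F}^2 + \|\Pi^{k-1}\hat \tau_h\|_{\Tref}^2,
\]
and mapping back using the scalings in Lemma~\ref{lem:equiv1} gives $\|\tau_h\|_T^2 \lesssim h \sum_{F\in\facetT}\|(\tau_h)_{nt}\|_F^2 + \|\Pi^{k-1}\tau_h\|_T^2$. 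Substituting the chosen d.o.f. values yields the desired bound on $\|\tau_h\|$.

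I would then evaluate $b_2(\tau_h, (v_h, \gamma_h))$ using representation~\eqref{eq::blftwoequitwo}. The facet sum reduces, via the definition of the facet d.o.f.s and the $L^2$-symmetry of $\Pi^1_F$, to $\sum_{F \in \facets} h^{-1}\|\Pi^1_F \jump{(v_h)_t}\|_F^2$. Using trace-freeness of $\tau_h$, the element integrand equals $\tau_h : \Dev{\nabla v_h - \gamma_h}$. Writing $R := \Pi^{k-1}\Dev{\nabla v_h - \gamma_h}$ and $S := (\id - \Pi^{k-1})\Dev{\nabla v_h - \gamma_h}$, the interior d.o.f. relation applied with $\varsigma=R$ contributes $\sum_T \|R\|_T^2$, so
\[
b_2(\tau_h, (v_h, \gamma_h)) = \|(v_h, \gamma_h)\|_{U_h,*}^2 - \sum_{T \in \mesh} \int_T \tau_h : S \dx.
\]

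The hardest step is absorbing the residual $\sum_T \int_T \tau_h : S$. My plan is to rewrite it as $\sum_T \int_T (\id - \Pi^{k-1})\tau_h : S$ by $L^2$-orthogonality, decompose $\tau_h$ into a facet-driven piece (with zero interior d.o.f.s) and an interior-driven piece (with vanishing $nt$-trace on $\partial T$), and bound each via reference-element isomorphism arguments (the interior piece satisfying $\|(\id-\Pi^{k-1})\tau_h^a\|_T \lesssim \|R\|_T$, the facet piece dominated by $\sum_{F\in\facetT} h^{-1}\|\Pi^1_F\jump{(v_h)_t}\|_F^2$). For $\|S\|_T$, I would apply Lemma~\ref{lem:gradofRT} to absorb the $\nabla v_h$-dependent part into $\|\divergence v_h\|_T$, and use the inverse-type estimate $\|(\id-\Pi^{k-1})\gamma_h\|_T \lesssim h\|\grad\gamma_h\|_T \sim h\|\curl \gamma_h\|_T$ (via Lemma~\ref{lem::gradofSKW}) for the remainder. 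A Cauchy--Schwarz and Young inequality absorption then yields $b_2(\tau_h, (v_h, \gamma_h)) \gtrsim \|(v_h, \gamma_h)\|_{U_h,*}^2$, which combined with $\|\tau_h\| \lesssim \|(v_h, \gamma_h)\|_{U_h,*}$ gives the stated inequality.
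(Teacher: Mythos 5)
Your construction of $\tau_h$ is, up to normalization, the same as the paper's: the paper builds $\tau_h=\alpha_0\tau_h^0+\alpha_1\tau_h^1$ from the constant matrices $S^F$ of \eqref{eq:Sbasis}, with $\tau_h^0$ an $nt$-bubble carrying the interior data $\Pi^{k-1}\Dev{\nabla v_h-\gamma_h}$ and $\tau_h^1$ carrying the facet data $\Pi^1_F\jump{(v_h)_t}$; prescribing the unisolvent d.o.f.s of Theorem~\ref{theo::unisolvent} is an equivalent way to say the same thing, and your scaling bound $\|\tau_h\|\lesssim\|(v_h,\gamma_h)\|_{U_h,*}$ and the identification of the main term $\|(v_h,\gamma_h)\|_{U_h,*}^2$ agree with the paper's computation (which is largely deferred to the companion paper).

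The gap is in the final absorption of the residual $\sum_T\int_T(\id-\Pi^{k-1})\tau_h:S\dx$ with $S=(\id-\Pi^{k-1})\Dev{\nabla v_h-\gamma_h}$. Your bound $\|S\|_T\lesssim\|\divergence(v_h)\|_T+h\|\curl(\gamma_h)\|_T$ is correct, but these two quantities are \emph{not} dominated by $\|(v_h,\gamma_h)\|_{U_h,*}$: by Lemma~\ref{lem::normequi}, $\|(v_h,\gamma_h)\|_{U_h}^2\sim\|(v_h,\gamma_h)\|_{U_h,*}^2+h^2\|\curl(\gamma_h)\|_h^2+\|\divergence(v_h)\|^2$, so $\|\cdot\|_{U_h,*}$ is only a seminorm on $U_h$ (e.g.\ it vanishes for $v_h=0$ and $\gamma_h\in\Poly^k_\perp(\mesh,\kk)$) and the two missing pieces are exactly $\|\divergence(v_h)\|$ and $h\|\curl(\gamma_h)\|_h$. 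Consequently Cauchy--Schwarz yields only $b_2(\tau_h,(v_h,\gamma_h))\ge\|(v_h,\gamma_h)\|_{U_h,*}^2-C\,\|(v_h,\gamma_h)\|_{U_h,*}\,(\|\divergence(v_h)\|+h\|\curl(\gamma_h)\|_h)$, and no application of Young's inequality converts this into $\gtrsim\|(v_h,\gamma_h)\|_{U_h,*}^2$, since the defect factor can be arbitrarily large relative to $\|(v_h,\gamma_h)\|_{U_h,*}$. The natural repair is to state and prove the lemma with the defect retained, i.e.\ $b_2(\tau_h,(v_h,\gamma_h))\gtrsim\big[\|(v_h,\gamma_h)\|_{U_h,*}-\delta(\|\divergence(v_h)\|+h\|\curl(\gamma_h)\|_h)\big]\|\tau_h\|$, exactly as is done for the analogous defect in \eqref{eq:b2-bdd-below} of Lemma~\ref{lem::lbbone}; those terms are then absorbed in the proof of Theorem~\ref{theorem::LBB}, where $h^2\|\curl(\gamma_h)\|_h^2$ is supplied by the bubble test function of Lemma~\ref{lem::lbbone} and $\|\divergence(v_h)\|^2$ by the choice $q_h=\beta\divergence(v_h)$. (The paper's own write-up is silent on this point, deferring to a setting where the projection mismatch does not occur because the velocity is $\BDM^{k+1}$ and the stress space contains degree-$(k+1)$ $nt$-bubbles; in the present Raviart--Thomas setting the residual is genuinely there.) As written, your concluding step does not close.
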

\begin{proof}
  We only present the proof in two dimensions, as the three
  dimensional case is similar.  From the local element basis exhibited
  in~\eqref{eq:Sbasis} (see also \cite[\S5.5]{gopledschoe} for a more
  detailed discussion), its clear that on any facet $F \in \facets$,
  there exists a constant trace-free function $S^F$ with the property
  that $S^F_{nt} \in \Poly^0(F, n_F^\perp)$, $\|S^F_{nt}\|_2 = 1$ on
  the facet $F,$ and $S^F_{nt}$ equals $(0,0)$ on all other facets in
  $\mathcal{F}_h$.  Given any $(v_h, \gamma_h) \in U_h$, define
  \begin{align*}
    \tau_h^0 &:= \sum\limits_{T \in \mesh} \sum\limits_{F \in \facets}
               \!\!-(S^F : \Pi^{k-1}\Dev{\nabla v_h - \gamma_h}) \,
               \lambda_T^F\,S^F, &\quad
    \tau_h^1 &:= \sum\limits_{F \in \facets}  \frac{1}{\sqrt{h}}
               \Pi^{1} (\jump{(v_h)_t})\; S^F, 
  \end{align*}
  where $\lambda_T^F$ is the unique barycentric coordinate function on
  the element $T$ opposite to the facet $F$ (so that
  $\lambda_T^FS^F$ is an $nt$-bubble).
  Clearly, $\tau_h^0$ and $\tau_h^1$ are in $\Sigma_h$. 
  Using the norm equivalences stated in \eqref{eq::sigmanormequi} and the
  mappings for $v_h$ and $\gamma_h$ given
  in~\eqref{eq:map_PW},  a scaling argument yields
  \begin{align*}
    \|\tau_h^0\|^2 \lesssim \sum\limits_{T \in \mesh} \| \Pi^{k-1}\Dev{\nabla v_h - \gamma_h}) \|_T^2  \quad \textrm{and} \quad \|\tau_h^1\|^2 \lesssim \sum\limits_{F \in \facets}\frac{1}{h} \| \Pi^{1} \jump{(v_h)_t} \|_F^2.
  \end{align*}
  Setting $\tau_h = \alpha_0 \tau_h^0 + \alpha_1\tau_h^1$ and
  selecting the constants $\alpha_0, \alpha_1$ appropriately, the rest
  of the proof proceeds along the same lines as the proof of
  \cite[Lemma 6.5]{gopledschoe}.
\end{proof}

\begin{remark}
  It is interesting to contrast Lemma~\ref{lem::lbbtwo} with
  \cite[Lemma~6.5]{gopledschoe}. The latter gives a similar
  LBB-condition. The differences are (i) the velocity space in
  \cite{gopledschoe} is $\BDM^{k+1}$ (defined in
  Remark~\ref{rem:BDM1}), (ii) the velocity norm is a discrete
  $H^1$-norm defined using $\nabla$ in place of $\eps(\cdot)$, (iii) there is
  no weak symmetry constraint and no associated space $W_h$, and (iv)
  the stress space in \cite{gopledschoe} equals the $\Sigma_h$ in
  \eqref{eq:Sigma_h} plus certain $nt$-bubbles of degree $k+1$
  (different from our $\delta\Sigma_h$ here). Lemma~\ref{lem::lbbtwo}
  shows that the inf-sup condition in \cite[Lemma~6.5]{gopledschoe}
  continues to hold even {\em if the $nt$-bubbles there are removed}
  and $\BDM^{k+1}$ is replaced by our Raviart-Thomas velocity space
  $V_h$. This observation can be extended to prove  the
  convergence of the MCS formulation in~\cite{gopledschoe}
  with so modified spaces.
\end{remark}

\begin{theorem}[Discrete LBB-condition] \label{theorem::LBB}
  Let $v_h \in V_h$ and $\gamma_h \in W_h$.
  Then,
  \begin{align}
        \label{eq:lbb-1}
    \sup\limits_{(\tau_h, q_h) \in \Sigmaplus \times Q_h} \frac{b_1( v_h, q_h) + b_2(\tau_h, (v_h, \gamma_h))}{\| \tau_h \| + \| q_h \|} \gtrsim \| (v_h, \gamma_h) \|_{U_h}.
  \end{align}
  If $v_h$ is in the divergence-free subspace
  $V_h^0 := \{ z_h \in V_h: \divergence(z_h) = 0\},$ then 
  \begin{align}
    \label{eq:lbb-b2only-2}
    \sup\limits_{\tau_h \in \Sigmaplus}
    \frac{ b_2(\tau_h, (v_h, \gamma_h))}{\| \tau_h \|}
    \gtrsim \| (v_h, \gamma_h) \|_{U_h}.
  \end{align}
\end{theorem}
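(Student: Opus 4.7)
The plan is to reduce~\eqref{eq:lbb-1} to three already-proved building blocks by means of a norm equivalence. Combining Lemma~\ref{lem::normeqvel}, which replaces the full $\jump{(v_h)_\tangential}$ contribution to $\Velnormh{(v_h,\gamma_h)}^2$ by its $\proj_F^1$ projection, with Lemma~\ref{lem::normequi} applied element by element to the remaining $\|\eps(v_h)\|_T^2 + \|\kappa(\curl v_h) - \gamma_h\|_T^2$ terms, one obtains
\[
  \Velnormh{(v_h,\gamma_h)}^2 \sim s_1^2 + s_2^2 + s_3^2,
\]
where $s_1 := \|(v_h,\gamma_h)\|_{U_h,*}$, $s_2 := h\,\|\curl \gamma_h\|_h$, and $s_3 := \|\divergence(v_h)\|_h$. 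It is therefore enough to exhibit a test pair $(\tau_h, q_h) \in \Sigmaplus \times Q_h$ whose numerator in~\eqref{eq:lbb-1} is at least a constant multiple of $s_1^2 + s_2^2 + s_3^2$ and whose denominator is at most a constant multiple of $\sqrt{s_1^2 + s_2^2 + s_3^2}$.

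Each $s_i$ is controlled by a specific tool. Lemma~\ref{lem::lbbtwo} delivers $\tau_h^{(1)} \in \Sigma_h \subset \Sigmaplus$ with $b_2(\tau_h^{(1)},(v_h,\gamma_h)) \gtrsim s_1\,\|\tau_h^{(1)}\|$. Lemma~\ref{lem::lbbone} delivers $\tau_h^{(2)} \in \Sigmaplus$ with $b_2(\tau_h^{(2)},(v_h,\gamma_h)) \gtrsim (s_2 - s_3)\,\|\tau_h^{(2)}\|$, at the price of a possibly negative $-s_3$ term. Finally, the Raviart-Thomas surjectivity $\divergence(V_h) = Q_h$ allows the choice $q_h^{\star} := \divergence(v_h) \in Q_h$, for which $b_1(v_h, q_h^{\star}) = s_3^2$ and $\|q_h^{\star}\| = s_3$.

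I would then normalize $\hat\tau^{(i)} := \tau_h^{(i)}/\|\tau_h^{(i)}\|$ and $\hat q := q_h^{\star}/s_3$ (dropping any slot in which the divisor vanishes), and test~\eqref{eq:lbb-1} with
\[
  \tau_h := s_1\,\hat\tau^{(1)} + s_2\,\hat\tau^{(2)}, \qquad q_h := (s_2 + s_3)\,\hat q.
\]
By linearity, the numerator is bounded below by
\[
  s_1 \cdot s_1 + s_2(s_2 - s_3) + (s_2 + s_3)\,s_3 \;=\; s_1^2 + s_2^2 + s_3^2,
\]
while the triangle inequality gives $\|\tau_h\| + \|q_h\| \le s_1 + 2 s_2 + s_3 \lesssim \sqrt{s_1^2 + s_2^2 + s_3^2}$, producing~\eqref{eq:lbb-1}. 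For~\eqref{eq:lbb-b2only-2}, the hypothesis $v_h \in V_h^0$ forces $s_3 = 0$, so the $-s_3$ penalty in Lemma~\ref{lem::lbbone} disappears and the test $\tau_h := s_1\,\hat\tau^{(1)} + s_2\,\hat\tau^{(2)}$ alone suffices; no help from $b_1$ is needed.

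The step I expect to require the most care is the exact cancellation in the numerator: the coefficients $s_2$ on $\hat\tau^{(2)}$ and $s_2+s_3$ on $\hat q$ are chosen precisely so that the negative $-s_2 s_3$ coming from Lemma~\ref{lem::lbbone} is absorbed by the $s_2 s_3$ inside $(s_2 + s_3)\,s_3$, leaving $s_3^2$ as a clean lower bound. Any mismatch in these weights would leave a residual cross term of indeterminate sign. Degenerate cases where one of $s_1, s_2, s_3$ vanishes are harmless: the corresponding summand in the test is simply dropped, and the missing $s_i^2$ on the right-hand side is also zero.
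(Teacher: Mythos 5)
Your proposal is correct and follows essentially the same route as the paper's proof: the same norm equivalence (Lemmas~\ref{lem::normeqvel} and~\ref{lem::normequi}) reducing $\| (v_h,\gamma_h)\|_{U_h}^2$ to $s_1^2+s_2^2+s_3^2$, the same two test stresses from Lemmas~\ref{lem::lbbone} and~\ref{lem::lbbtwo}, and the same choice $q_h\propto \divergence(v_h)$. The one point to repair is the claimed \emph{exact} cancellation of the cross term: the hidden constants multiplying $h\|\curl\gamma_h\|_h$ and $-\|\divergence(v_h)\|_h$ in Lemma~\ref{lem::lbbone} need not coincide, so instead of matching weights exactly you should, as the paper does, keep free parameters $\alpha$ on $\tau_h^{(2)}$ and $\beta$ on $q_h$ (with $\beta$ large relative to $\alpha^2$) and absorb the residual $s_2 s_3$ term by Young's inequality.
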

\begin{proof}
  By Lemmas~\ref{lem::lbbone} and~\ref{lem::lbbtwo}, for any given
  $(v_h, \gamma_h) \in U_h$, there are
  $\tau_h^1, \tau_h^2 \in \Sigma_h^+$ satisfying
  \begin{align}
    \label{eq:4}
    b_2(\tau_h^1, (v_h, \gamma_h))
    & \gtrsim
      \bigg[
      h \| \curl (\gamma_h) \|_h - \| \divergence(v_h) \|
      \bigg]  \| \tau^1_h \|,
    \\ \label{eq:5}
    b_2(\tau^2_h, (v_h, \gamma_h))
    & \gtrsim \| (v_h, \gamma_h) \|_{U_h,*}
      \| \tau^2_h \|,
  \end{align}
  Clearly, the same inequalities hold when $\tau^1_h$ and $\tau^2_h$
  are scaled by any nonzero factor, so we may assume without loss of
  generality, that they have been scaled so that
  $ \|\tau_h^1 \| = h \| \curl \gamma_h \|_h $
  and $ \|\tau^2_h \| = \| (v_h, \gamma_h) \|_{U_h,*}.$
  Set $\tau_h = \alpha \tau_h^1 + \tau^2_h,$ where
  $\alpha \in \rr$ is to be chosen shortly. It follows
  from~\eqref{eq:4} and~\eqref{eq:5} that
  \begin{equation}
    \label{eq:3_b2}
    b_2(\tau_h, (v_h, \gamma_h)) \gtrsim
    \alpha   
    h^2 \| \curl \gamma_h \|^2_h -
    \alpha   h \| \divergence(v_h) \|_h \| \curl \gamma_h\|_h
    +
    \| (v_h, \gamma_h) \|_{U_h,*}^2.
  \end{equation}

  Next, we choose $q_h \in Q_h$ so that
  $q_h =\beta \divergence(v_h)$, where $\beta \in \rr$ is another
  constant to be chosen shortly.  Then~\eqref{eq:3_b2} implies
  \begin{align*}
    b_1( v_h, q_h) + b_2(\tau_h, (v_h, \gamma_h))
    & =
      \beta \|\divergence(v_h)\|_{h}^2
      +
      \alpha   
      h^2 \| \curl \gamma_h \|^2_h
      +
      \| (v_h, \gamma_h) \|_{U_h,*}^2
    \\
    &  -
      \alpha   h \| \divergence(v_h) \|_h \| \curl \gamma_h\|_h.
  \end{align*}
  Choose any $\alpha >1$ and $\beta > \alpha^2/2$.
  Then,  using Young's inequality for the last term,
  \begin{align*}
    b_1( v_h, q_h) + b_2(\tau_h, (v_h, \gamma_h))
    & \gtrsim
      \|\divergence(v_h)\|_{h}^2  +  h^2 \| \curl \gamma_h \|_h^2  +
      \|(v_h, \gamma_h) \|_{U_h,*}^2.
    \\
    \intertext{Recalling that we also have}
    \| \tau_h \|_{\Sigma_h^+}^2 + \| q_h \|^2
    & \lesssim   \|\divergence(v_h)\|_h^2 +  h^2 \| \curl \gamma_h\|_h^2
      + \| (v_h, \gamma_h) \|_{U_h,*}^2,
  \end{align*}    
  we can now conclude the proof of~\eqref{eq:lbb-1}
  using the norm equivalence of Lemma~\ref{lem::normequi}. The proof
  of~\eqref{eq:lbb-b2only-2} is similar (and in fact simpler since all
  terms involving $\divergence(v_h)$ vanish).
\end{proof}

\subsection{Error estimates}

In this subsection we show that the error in the discrete MCS solution
converges at optimal order. As we have chosen polynomials of degree $k$
for the stress space $\Sigma_h$, the optimal rate of convergence for
$\| \sigma-\sigma_h\|$ is $\mathcal{O}(h^{k+1})$.
However, the optimal rate for the velocity error in our discrete
$H^1$-like norm, namely, $\|u - u_h\|_{1,h,\eps}$ is only
$\mathcal{O}(h^{k})$ (since the Raviart-Thomas velocity space $V_h$
only contains $\Poly^k(T, \rr^d)$ within each mesh element $T$).
Nevertheless, we are still able to prove optimal convergence rate of
the stress error by using an appropriate interpolation operator and
deducing that the stress error is independent of the velocity error.
Another important property we shall conclude in this subsection
is the pressure-robustness of the method.

\begin{lemma}[Continuity] \label{lem::continuity}
  The bilinear forms $a, b_1$ and $b_2$ are continuous: 
  \begin{alignat*}{2}
    a(\varsigma_h, \tau_h) &\lesssim
    \big( \nu^{-1/2}
    \|\varsigma_h\|\big)
    \big( \nu^{-1/2} \|\tau_h\|\big),  &&
    \quad \textrm{ for all } \varsigma_h, \tau_h \in \Sigmaplus,
    \\
    b_1(v_h, q_h) &\lesssim  \|(v_h, 0)\|_{U_h}\, \|q_h\|,  && \quad \textrm{ for all } v_h\in V_h, q_h \in Q_h,\\
    b_2(\tau_h, (v_h,\eta_h)) &\lesssim \|\tau_h\|\; \|(v_h, \eta_h)\|_{U_h},   && \quad \textrm{ for all } \tau_h \in \Sigmaplus, (v_h, \eta_h) \in U_h.
  \end{alignat*}
\end{lemma}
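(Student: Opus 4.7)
My plan is to prove all three continuity estimates by direct application of Cauchy-Schwarz, with the main care being the $b_2$ bound where the coupling between volume and facet terms must be controlled.

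The bound for $a$ is essentially immediate: by Cauchy-Schwarz in $L^2(\Omega, \mm)$ and the fact that the deviatoric projection $\text{dev}$ is bounded by $1$ on $\mm$ (in the Frobenius norm), we get
\[
a(\varsigma_h, \tau_h) = \nu^{-1}(\Dev{\varsigma_h}, \Dev{\tau_h}) \le \nu^{-1} \|\Dev{\varsigma_h}\|\,\|\Dev{\tau_h}\| \le \nu^{-1} \|\varsigma_h\|\,\|\tau_h\|,
\]
which splits as claimed. Similarly, $b_1(v_h, q_h) = (\divergence(v_h), q_h) \le \|\divergence(v_h)\|\,\|q_h\|$, and since $\divergence(v_h) = \trace{\eps(v_h)}$ yields $\|\divergence(v_h)\| \le \sqrt{d}\,\|\eps(v_h)\|_h \le \sqrt{d}\,\|(v_h,0)\|_{U_h}$, the bound on $b_1$ follows.

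For $b_2$ I would use the second representation \eqref{eq::blftwoequitwo}. On each element, since $\nabla v_h = \eps(v_h) + \kappa(\curl v_h)$ and $\eps(v_h)$ is symmetric while $\kappa(\curl v_h) - \eta_h$ is skew, these are Frobenius-orthogonal, so
\[
\|\nabla v_h - \eta_h\|_T^2 = \|\eps(v_h)\|_T^2 + \|\kappa(\curl v_h) - \eta_h\|_T^2.
\]
Cauchy-Schwarz in each integral and then over the element/facet sums gives
\[
|b_2(\tau_h,(v_h,\eta_h))| \le \|\tau_h\| \bigl(\|\eps(v_h)\|_h^2 + \|\kappa(\curl v_h) - \eta_h\|_h^2\bigr)^{1/2} + \Bigl(\sum_{F}h\,\|(\tau_h)_{nt}\|_F^2\Bigr)^{1/2}\Bigl(\sum_F h^{-1}\|\jump{(v_h)_t}\|_F^2\Bigr)^{1/2}.
\]
The first factor on the right is already bounded by $\|(v_h,\eta_h)\|_{U_h}$, and the second boundary sum is controlled by $\|(v_h,\eta_h)\|_{U_h}$ by definition.

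The only nontrivial ingredient is $\sum_F h\,\|(\tau_h)_{nt}\|_F^2 \lesssim \|\tau_h\|^2$, which is the expected obstacle. For the $\Sigma_h$ part this follows from a standard discrete trace/inverse inequality on each element (using that $\tau_h$ is a piecewise polynomial of fixed degree on a quasiuniform mesh, so $h\,\|(\tau_h)_{nt}\|_F^2 \lesssim \|\tau_h\|_T^2$), and for the $\delta\Sigma_h$ part the contribution vanishes outright by Lemma~\ref{lem::deltaspacenormtang}. Combining and absorbing the factor $\nu^{1/2}\nu^{-1/2}$ into the constants completes the estimate and yields $b_2(\tau_h,(v_h,\eta_h)) \lesssim \|\tau_h\|\,\|(v_h,\eta_h)\|_{U_h}$.
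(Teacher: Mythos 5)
Your proposal is correct and follows essentially the same route as the paper: Cauchy--Schwarz for $a$ and $b_1$, and for $b_2$ the representation \eqref{eq::blftwoequitwo} together with the splitting $\nabla v_h - \eta_h = \eps(v_h) + (\kappa(\curl v_h) - \eta_h)$ followed by Cauchy--Schwarz on the element and facet sums. The only cosmetic difference is that where you invoke an elementwise discrete trace/inverse inequality to get $\sum_F h\,\|(\tau_h)_{nt}\|_F^2 \lesssim \|\tau_h\|^2$, the paper cites the prepackaged scaling equivalences of Lemma~\ref{lem:equiv1}; these are the same estimate.
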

\begin{proof}
  The continuity of $a$ and $b_1$ follow by the Cauchy Schwarz
  inequality. For $b_2,$ we use \eqref{eq::blftwoequitwo}
  and $\nabla v_h = \eps(v_h) +
  \kappa(\curl v_h)$ to get 
  \begin{align*}
  \blftwo(\tau_h, (v_h,\eta_h))
    &=  -\sum\limits_{T \in \mesh} \int_T \tau :
      \big[\eps(v_h)  + 
      (\kappa(\curl v_h) - \eta_h) \big]\dx
      + \sum\limits_{F \in \facets} \int_F \tau_{nt} \cdot \jump{(v_h)_t} \ds.
  \end{align*}
  Now, Cauchy-Schwarz inequality and \eqref{eq::sigmanormequi}
  of Lemma~\ref{lem:equiv1} finishes the proof.
\end{proof}

\begin{lemma}[Coercivity in the kernel] \label{lem::coerzive}
  For all $(\tau_h, q_h)$ in the kernel 
  \begin{align*}
    \kernel := \{ (\tau_h,q_h) \in  \Stressspaceh
    \times \Presspaceh: \blfone(\Velvarhtest, \Presvarhtest) +
    \blftwo(\tau_h,(\Velvarhtest, \eta_h))= 0 \textrm{ for all }  (\Velvarhtest, \eta_h) \in U_h \},
  \end{align*}
  we have
  $\nu^{-1}
  \big(\,\Sigmanormh{ \tau_h } +
    \| q_h \|
    \big)^2  \lesssim \;
    \ablf(\tau_h, \tau_h).
    $
\end{lemma}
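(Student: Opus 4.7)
The plan is to split the claim into bounds on $\|\tau_h\|$ and $\|q_h\|$ separately, treating the stress piece as essentially automatic and then extracting the pressure bound from the kernel relation by a Fortin-type test function.

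The first observation is that every element of $\Sigmaplus$ is trace-free: functions in $\Sigma_h$ are so by definition, and functions in $\delta\Sigma_h$ are so because of the outer $\Dev{\cdot}$ in~\eqref{eq:deltaSigma}. Hence $\Dev{\tau_h}=\tau_h$ and $a(\tau_h,\tau_h)=\nu^{-1}\|\tau_h\|^2$. Next, by the definition of $\Sigma_h$ together with Lemma~\ref{lem::deltaspacenormtang}, the jumps $\jump{(\tau_h)_{nt}}$ vanish on every interior facet for any $\tau_h \in \Sigmaplus$; an inverse trace inequality absorbs any surviving boundary facet contributions in the norm equivalence~\eqref{eq::sigmanormequi}. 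Thus $\Sigmanormh{\tau_h}^2\sim\|\tau_h\|^2 = \nu\, a(\tau_h,\tau_h)$, and the $\Sigmanormh{\tau_h}$-piece of the asserted estimate is already in hand.

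The crux is therefore $\|q_h\|\lesssim\|\tau_h\|$. The plan is to test the kernel relation with $(v_h,0)\in U_h$ for a carefully chosen $v_h\in V_h$. By the classical Stokes inf-sup stability of the $H(\div)$-conforming pair $(V_h,Q_h)=(\RT^k\cap H_0(\div,\om),\,Q\cap\Poly^k(\mesh))$, divergence is surjective onto $Q_h$ and admits a right inverse with $\|v_h\|_{1,h,\eps}\lesssim\|q_h\|$ (constructed by lifting via a continuous-level Bogovskii right inverse, applying the RT interpolant that commutes with $\div$, and invoking the discrete Korn inequality~\eqref{eq:discrete_Korn} to pass from a broken $H^1$-norm to the $\eps$-based norm, which is available because $v_h\cdot n=0$ on $\Gamma$ and $\jump{v_h\cdot n}=0$ across interior facets). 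In particular $\|\kappa(\curl v_h)\|_h\lesssim\|\grad v_h\|_h\lesssim\|v_h\|_{1,h,\eps}\lesssim\|q_h\|$, so $\|(v_h,0)\|_{U_h}\lesssim\|q_h\|$.

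Inserting this $(v_h,0)$ into the kernel relation and using the continuity of $b_2$ from Lemma~\ref{lem::continuity},
\[
0 \;=\; b_1(v_h,q_h) + b_2(\tau_h,(v_h,0)) \;=\; \|q_h\|^2 + b_2(\tau_h,(v_h,0)),
\]
so that
\[
\|q_h\|^2 \;=\; -b_2(\tau_h,(v_h,0)) \;\lesssim\; \|\tau_h\|\,\|(v_h,0)\|_{U_h} \;\lesssim\; \|\tau_h\|\,\|q_h\|.
\]
Cancelling one factor of $\|q_h\|$ gives $\|q_h\|\lesssim\|\tau_h\|$, and multiplying by $\nu^{-1}$ and combining with the bound on $\|\tau_h\|$ produces the asserted coercivity estimate. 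The main obstacle is verifying that the classical Raviart-Thomas/DG Stokes inf-sup survives with the $\eps$-based broken norm $\|\cdot\|_{1,h,\eps}$ on the velocity side; this is exactly where~\eqref{eq:discrete_Korn} from \cite{MR2047078} is essential, and everything else in the argument reduces to continuity of $b_1$, $b_2$ and the trace-free structure of $\Sigmaplus$.
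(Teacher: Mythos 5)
Your proposal is correct and follows essentially the same route as the paper: the bound on $\|\tau_h\|$ is immediate since every $\tau_h\in\Sigmaplus$ is trace-free, and the bound $\|q_h\|\lesssim\|\tau_h\|$ comes from testing the kernel relation with a Fortin lift $v_h$ of $q_h$ (the discrete inf-sup of the $\RT^k$--$\Poly^k$ pair, which the paper simply cites from \cite{LedererSchoeberl2017}) together with the continuity of $\blftwo$. The only cosmetic difference is your choice of vorticity test function $\eta_h=0$, which forces you to invoke the discrete Korn inequality to control $\|\kappa(\curl v_h)\|_h$ inside $\|(v_h,0)\|_{U_h}$, whereas the paper takes $\eta_h=\kappa(\curl v_h)$ so that this term vanishes from the $U_h$-norm altogether.
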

\begin{proof}
  By~\cite[Theorem~2.2]{LedererSchoeberl2017}, for any $q_h\in Q_h,$
  there is a $v_h \in V_h$ such that $\| q_h \|^2
  \lesssim (\divergence(\Velvarhtest), q_h)$
  and a discrete $H^1$-norm of $v_h$ is bounded by $\| q_h
  \|$. The latter bound implies, in particular, that
  $ \| v_h\|_{1,h,\eps} \lesssim \| q_h \|,$ and also that
  $\eta_h=\kappa(\curl v_h)$ satisfies
  $ \| (v_h, \eta_h) \|_{U_h} \lesssim \| q_h \|.$ This together
  with Lemma~\ref{lem::continuity} implies
  \begin{align*}
    \| q_h \|^2
    & \lesssim b_1(v_h, q_h) = -b_2( \tau_h, (v_h, \eta_h))
      \lesssim \,\|\tau_h\| \,\|(v_h, \eta_h)\|_{U_h}
      \lesssim \,\|\tau_h\| \,\| q_h\|
  \end{align*}
  yielding the needed bound for $\| q_h\|$. 
\end{proof}

We are now ready to conclude an inf-sup condition for 
$
 \blfbig(v_h, \eta_h,
\tau_h, q_h;
\tilde{v}_h,\tilde{\eta}_h, \tilde{\tau}_h, \tilde{q}_h)
:=\ablf(\tau_h, \tilde{\tau}_h) + 
\blfone(v_h, \tilde{q}_h) +
\blfone(\tilde{v}_h, q_h) +
\blftwo(\tau_h,(\tilde{v}_h,\tilde{\eta}_h)) +
\blftwo(\tilde{\tau}_h,(v_h, \eta_h)). 
$
\begin{corollary} \label{cor:big_inf_sups}
  Let $\tau_h \in \Sigma_h^{+}$, $v_h \in V_h$,
  $\eta_h \in W_h$, and $ q_h\in Q_h$. There holds
  \begin{align}
    \label{eq:big-inf-sup-1}
    \|(v_h, \eta_h, \tau_h, q_h) \|_*
    \lesssim
    \sup
    \limits_{\substack{\tilde{v}_h \in V_h, \; \tilde{\eta}_h \in W_h \\
    \tilde{\tau}_h \in \Sigmaplus,\;\tilde{q}_h \in Q_h    }}
    \frac{\blfbig(v_h, \eta_h, \tau_h,q_h;
    \tilde{v}_h, \tilde{\eta}_h, \tilde{\tau}_h, \tilde{q}_h)}
    {\|(\tilde{v}_h, \tilde{\eta}_h, \tilde{\tau}_h, \tilde{q}_h) \|_*},
  \end{align}
  so, in particular, there is a unique solution for the discrete MCS
  system~\eqref{eq::discrmixedstressstokesweak}. Moreover, if $v_h$ is
  restricted to 
  $V_h^0$, we also have
  \begin{align}
        \label{eq:big-inf-sup-2}
    \|(v_h, \eta_h, \tau_h, 0) \|_* \lesssim
    \sup\limits_{\tilde{v}_h \in V^0_h,
    \;\tilde{\eta}_h \in  W_h,\;
    \tilde{\tau}_h \in\Sigmaplus
    }
    \frac{
    \blfbig(v_h, \eta_h, \tau_h, 0;
    \tilde{v}_h, \tilde{\eta}_h,  \tilde{\tau}_h, 0)}
    {\|(\tilde{v}_h, \tilde{\eta}_h,  \tilde{\tau}_h, 0) \|_*}.
  \end{align}
\end{corollary}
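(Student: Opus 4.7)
The plan is a standard Brezzi-type saddle-point argument combining the discrete LBB of Theorem~\ref{theorem::LBB} with the surjectivity of the divergence already invoked in Lemma~\ref{lem::coerzive}. Given any $(v_h, \eta_h, \tau_h, q_h) \in \Velspaceh \times W_h \times \Sigmaplus \times \Presspaceh$, the strategy is to exhibit the test-quadruple $(\tilde v_h, \tilde \eta_h, \tilde \tau_h, \tilde q_h)$ as a weighted combination $T_1 + \alpha_2 T_2 + \alpha_3 T_3$ of three specially designed pieces, each one responsible for producing one of the three squared-norm contributions that make up $\|(v_h, \eta_h, \tau_h, q_h)\|_*^2$. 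The viscosity factors in $\|\cdot\|_*$ force careful $\nu$-scaling of each piece, and that bookkeeping is the principal technical obstacle.

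First, take $T_1 = (-v_h, -\eta_h, \tau_h, q_h)$. The antisymmetric sign choice makes the off-diagonal $\blfone$- and $\blftwo$-contributions cancel, and since every $\tau_h \in \Sigmaplus$ is trace-free so $\Dev{\tau_h} = \tau_h$, one gets $\blfbig(\cdot\,;T_1) = \ablf(\tau_h, \tau_h) = \nu^{-1}\|\tau_h\|^2$. Second, invoke inf-sup~\eqref{eq:lbb-1} of Theorem~\ref{theorem::LBB} with $\gamma_h = \eta_h$ to obtain $(\hat\tau, \hat q) \in \Sigmaplus \times \Presspaceh$ with $\|\hat\tau\| + \|\hat q\| \lesssim \|(v_h, \eta_h)\|_{U_h}$ and $\blfone(v_h, \hat q) + \blftwo(\hat\tau, (v_h, \eta_h)) \gtrsim \|(v_h, \eta_h)\|_{U_h}^2$, and set $T_2 = (0, 0, \nu\hat\tau, \nu\hat q)$; the factor $\nu$ is chosen so that $\|T_2\|_* \lesssim \|(v_h, \eta_h, \tau_h, q_h)\|_*$ and so that $\blfbig(\cdot\,;T_2)$ contributes a clean $\nu\,\|(v_h, \eta_h)\|_{U_h}^2$ term. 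Third, using the surjectivity of $\divergence\colon \Velspaceh \to \Presspaceh$ from~\cite[Theorem~2.2]{LedererSchoeberl2017} already used in Lemma~\ref{lem::coerzive}, pick $v^\dagger \in \Velspaceh$ with $\blfone(v^\dagger, q_h) \gtrsim \|q_h\|^2$ and $\|(v^\dagger, \kappa(\curl v^\dagger))\|_{U_h} \lesssim \|q_h\|$, and take $T_3 = \nu^{-1}(v^\dagger, \kappa(\curl v^\dagger), 0, 0)$ so that $\blfbig(\cdot\,;T_3)$ produces $\nu^{-1}\|q_h\|^2$ up to a cross term.

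Combining these, $\blfbig(\cdot\,; T_1 + \alpha_2 T_2 + \alpha_3 T_3)$ yields the three desired diagonal contributions plus cross terms of the schematic form $\|\tau_h\|\,\|(v_h, \eta_h)\|_{U_h}$ and $\nu^{-1}\|\tau_h\|\,\|q_h\|$. Splitting these as $(\nu^{-1/2}\|\tau_h\|)(\nu^{1/2}\|(v_h,\eta_h)\|_{U_h})$ and $(\nu^{-1/2}\|\tau_h\|)(\nu^{-1/2}\|q_h\|)$ respectively and applying Young's inequality, the cross terms are absorbed into the diagonal quantities $\nu^{-1}\|\tau_h\|^2$, $\nu\|(v_h,\eta_h)\|_{U_h}^2$, and $\nu^{-1}\|q_h\|^2$ provided $\alpha_2$ and $\alpha_3$ are chosen small enough. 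The outcome is $\blfbig(\cdot\,;\tilde T) \gtrsim \|(v_h, \eta_h, \tau_h, q_h)\|_*^2$ while $\|\tilde T\|_* \lesssim \|(v_h, \eta_h, \tau_h, q_h)\|_*$, which is exactly~\eqref{eq:big-inf-sup-1}. Existence and uniqueness for the square linear system~\eqref{eq::discrmixedstressstokesweak} follow in the usual way.

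Finally, the divergence-free version~\eqref{eq:big-inf-sup-2} is handled by the same three-piece recipe, except that $T_3$ is dropped (since $q_h = 0$) and the stronger inf-sup~\eqref{eq:lbb-b2only-2} on $V_h^0$ is used in place of~\eqref{eq:lbb-1} when constructing $T_2$. The hard part throughout is the $\nu$-weighted bookkeeping: choosing the powers of $\nu$ in $T_2$ and $T_3$ so that each test's $\|\cdot\|_*$-norm is controlled by $\|(v_h, \eta_h, \tau_h, q_h)\|_*$ while every cross term lands in a dimensionally matched split that can be absorbed by Young's inequality without degrading the final constants.
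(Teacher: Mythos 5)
Your construction is correct, and it amounts to an explicit, unrolled version of the standard Brezzi argument that the paper simply cites as ``the standard theory of mixed methods'' together with Theorem~\ref{theorem::LBB}, Lemma~\ref{lem::continuity}, and Lemma~\ref{lem::coerzive}. The one structural difference worth noting is how the pressure is controlled: the paper routes the $\|q_h\|$-bound through the kernel-coercivity Lemma~\ref{lem::coerzive} (which itself rests on the surjectivity of the divergence from \cite[Theorem~2.2]{LedererSchoeberl2017}), and then lets the abstract theory assemble the pieces. You instead exploit that $\ablf$ is globally coercive on $\Sigmaplus$ (since $\Dev{\tau_h}=\tau_h$ for trace-free $\tau_h$, the antisymmetric test $T_1$ yields exactly $\nu^{-1}\|\tau_h\|^2$), and you recover the $\nu^{-1}\|q_h\|^2$ contribution directly with the separate piece $T_3$ built from the same divergence-surjectivity, absorbing the resulting $\nu^{-1}\|\tau_h\|\,\|q_h\|$ cross term by Young's inequality. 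The two routes use identical ingredients; yours makes the $\nu$-scaling and the constants fully explicit and avoids invoking Lemma~\ref{lem::coerzive} as a black box, at the cost of the bookkeeping you describe. Your $\nu$-weights on $T_2$ and $T_3$ are the right ones for the norm \eqref{eq:norm1}, the cross-term splits are dimensionally consistent, and the restriction to $V_h^0$ in \eqref{eq:big-inf-sup-2} is handled correctly since $T_1$ and $T_2$ have velocity components in $V_h^0$ and vanishing pressure components.
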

\begin{proof}
  The first inf-sup condition follows from the standard theory of
  mixed methods~\cite{brezzi2012mixed}, using
  Theorem~\ref{theorem::LBB} (the inf-sup condition for $b_1$ and
  $b_2$ given by~\eqref{eq:lbb-1}), Lemma~\ref{lem::continuity}
  (continuity of forms), and Lemma~\ref{lem::coerzive} (coercivity in
  the kernel).

  The second inf-sup condition also follows in a similar fashion, but
  now using the other inequality \eqref{eq:lbb-b2only-2} of
  Theorem~\ref{theorem::LBB}.
\end{proof}

\begin{theorem}[Consistency] \label{the::consistency}
  The MCS method with weakly imposed
  symmetry~\eqref{eq::discrmixedstressstokesweak} is consistent in the
  following sense. If the exact solution of the Stokes
  problem~\eqref{eq::mixedstressstokes} is such that
  $\Velvar \in H^1(\om, \rr^d)$, $\omega \in L^2(\om, {\mm} )$,
  $\Stressvar \in H^1(\om, \mathbb{D})$ and
  $\Presvar \in L^2_0(\om, \rr)$, then
  \begin{align*}
      \blfbig(\Velvar,\omega, \Stressvar, \Presvar;\Velvarhtest,\eta_h, \Stressvarhtest, \Presvarhtest) 
                                   = (-\Forcevar, \Velvarhtest)_\om
  \end{align*}
  for all $\Velvarhtest \in \Velspaceh, \eta_h \in W_h, \Presvarhtest \in \Presspaceh,
  $ and $\Stressvarhtest \in \Stressspaceh.$
\end{theorem}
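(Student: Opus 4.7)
The plan is to substitute the exact solution $(\Velvar, \omega, \Stressvar, \Presvar)$ into each of the five components of $\blfbig(\cdot\,;\,\Velvarhtest,\eta_h,\Stressvarhtest,\Presvarhtest)$, and exploit the assumed regularity together with the strong form \eqref{eq::mixedstressstokes}. Both representations~\eqref{eq::blftwoequione} and~\eqref{eq::blftwoequitwo} of $\blftwo$ will be used: the second is convenient when testing against the smooth velocity argument $\Velvar$, while the first is more natural when the first argument $\Stressvar$ is the smooth one. The central mechanism is that $\Velvar \in H^1(\om,\rr^d)$ gives $\jump{\Velvar_{\tangential}} = 0$ on all interior facets, while $\Stressvar \in H^1(\om,\dd)$ gives $\jump{\Stressvar_{nn}} = 0$ on all interior facets, so every facet sum that arises will collapse.

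I would group the five terms as
\[
  [\ablf(\Stressvar,\Stressvarhtest) + \blftwo(\Stressvarhtest,(\Velvar,\omega))]
  + \blfone(\Velvar,\Presvarhtest)
  + [\blftwo(\Stressvar,(\Velvarhtest,\eta_h)) + \blfone(\Velvarhtest,\Presvar)].
\]
For the first bracket, applying~\eqref{eq::blftwoequitwo} and using that the facet sum vanishes (since $\Velvar$ is single-valued) yields $\blftwo(\Stressvarhtest,(\Velvar,\omega)) = -(\Stressvarhtest,\nabla\Velvar - \omega)$. Because $\trace{\Stressvarhtest}=0$, one also has $\ablf(\Stressvar,\Stressvarhtest) = (\visc^{-1}\dev{\Stressvar},\Stressvarhtest)$, so this bracket equals $(\visc^{-1}\dev{\Stressvar} - \nabla\Velvar + \omega,\Stressvarhtest)$, which vanishes by~\eqref{eq::mixedstressstokes-a}. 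The middle term $\blfone(\Velvar,\Presvarhtest) = (\divergence\Velvar,\Presvarhtest)$ vanishes by~\eqref{eq::mixedstressstokes-d}.

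For the last bracket, I would apply~\eqref{eq::blftwoequione}. The interior facet jump $\jump{\Stressvar_{nn}}$ vanishes by the $H^1$-regularity of $\Stressvar$, and the volume term $(\Stressvar,\eta_h)$ vanishes because $\Stressvar = \Stressvar^{\trans}$ while $\eta_h \in \Poly^k(\mesh,\kk)$ is skew-symmetric. Thus $\blftwo(\Stressvar,(\Velvarhtest,\eta_h))$ reduces to $(\divergence\Stressvar,\Velvarhtest)$, which by~\eqref{eq::mixedstressstokes-b} equals $(\nabla\Presvar - \Forcevar,\Velvarhtest)$. The only subtle step is justifying the integration by parts $(\nabla\Presvar,\Velvarhtest) = -(\Presvar,\divergence\Velvarhtest) = -\blfone(\Velvarhtest,\Presvar)$: since $\divergence\Stressvar + \Forcevar \in L^2(\om,\rr^d)$, equation~\eqref{eq::mixedstressstokes-b} upgrades $\Presvar$ from $L^2_0(\om)$ to $H^1(\om)$, after which a standard Green's identity applies thanks to $\Velvarhtest \cdot n = 0$ on $\Gamma$ (which holds because $\Velvarhtest \in \Velspaceh \subset H_0(\divergence,\om)$). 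Adding $\blfone(\Velvarhtest,\Presvar)$ absorbs the $(\nabla\Presvar,\Velvarhtest)$ contribution, leaving $-(\Forcevar,\Velvarhtest)$. Summing the three pieces gives the claimed consistency identity.
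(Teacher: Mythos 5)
Your proof is correct and is exactly the standard consistency argument the paper has in mind (the paper omits the proof, deferring to the analogous one in the earlier MCS reference): split $\blfbig$ into the constitutive block, the mass-conservation term, and the equilibrium block, use \eqref{eq::blftwoequitwo} against the smooth $\Velvar$ and \eqref{eq::blftwoequione} against the smooth $\Stressvar$, and your observation that \eqref{eq::mixedstressstokes-b} upgrades $\Presvar$ to $H^1$ before integrating the pressure by parts is the right way to justify that step. The only cosmetic point is that the facet sums also run over boundary facets, where the jump is the identity, so you should additionally invoke $\Velvar=0$ on $\Gamma$ (from \eqref{eq::mixedstressstokes-e}) for the $\jump{\Velvar_t}$ term and $\Velvarhtest\cdot n=0$ for the $\jump{\Stressvar_{nn}}$ term, both of which are immediate from the setup.
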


The proof of Theorem~\ref{the::consistency} is easy (see, e.g., the
similar proof of \cite[Theorem 6.2]{gopledschoe}), so we omit it.  We
now have all the ingredients to prove the following convergence
result.  Let $I_{V_h}$ denote the standard Raviart-Thomas interpolator
(see, e.g.,~\cite{brezzi2012mixed}) and let
$\|(u, \omega, \sigma, p)\|_{\nu, s} = \nu^{-1} \| \sigma\|_{H^s(\mesh,
  \dd)} + \nu^{-1} \|p \|_{H^s(\mesh, \rr)} + \| \omega \|_{H^s(\mesh,
  \kk)} + \| u \|_{H^{s+1}(\mesh, \rr^d)}.$

\begin{theorem}[Optimal convergence] \label{the::errorconvergence}
  Let $\Velvar \in H^1(\om, \rr^d) \cap H^m(\mesh, \rr^d)$,
  $\Stressvar \in  H^1(\om, \mathbb{D}) \cap H^{m-1}(\mesh,
  \mathbb{D})$, $\Presvar \in  L^2_0(\om, \rr) \cap H^{m-1}(\mesh,
  \rr)$ and $\omega \in  L^2(\om, \kk) \cap H^{m-1}(\mesh, \kk)$ be
  the exact solution of the mixed Stokes problem
  \eqref{eq::mixedstressstokes}, let $u_h$,$\Stressvarh$, 
  $\omega_h$ and $p_h$ solve~\eqref{eq::discrmixedstressstokesweak}
  and let $s = \min(m-1,k+1)$. Then, 
  \begin{equation}
    \label{eq:1}
    \begin{aligned}
      \frac{1}{\nu}(\Sigmanormh{ \Stressvar - \Stressvarh} 
      + \| p -      p_h\|)
      &+ \| (\omega_h - \Pi^k\omega, u_h - I_{V_h}u)\|_{U_h}
      \lesssim h^s  \| (0,  \omega, \sigma, p) \|_{\nu, s}.
    \end{aligned}    
  \end{equation}
\end{theorem}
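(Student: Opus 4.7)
The strategy is the standard mixed-method paradigm: introduce interpolants, derive Galerkin orthogonality from Theorem~\ref{the::consistency}, apply the inf-sup condition of Corollary~\ref{cor:big_inf_sups}, and then exploit the specific design of the method to eliminate $u$-dependent terms from the resulting bound.

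I begin by defining interpolants $I_{V_h}u \in V_h$ (the Raviart--Thomas canonical interpolant), $I_{\Sigma_h}\sigma \in \Sigma_h$ (from Theorem~\ref{the::sigmainterpolation}), and $\Pi^k\omega \in W_h$, $\Pi^k p \in Q_h$ (elementwise $L^2$-projections). The commuting property of the Raviart--Thomas interpolation gives $\div I_{V_h}u = \Pi^k \div u = 0$, so $I_{V_h}u$ lies in the divergence-free subspace $V_h^0$. Setting $e_u := u_h - I_{V_h}u$ and analogously $e_\omega,e_\sigma,e_p$, and $\eta_u := u - I_{V_h}u$, etc., I subtract~\eqref{eq::discrmixedstressstokesweak} from the consistency identity to obtain the Galerkin orthogonality $\blfbig(e_u,e_\omega,e_\sigma,e_p;\cdot) = \blfbig(\eta_u,\eta_\omega,\eta_\sigma,\eta_p;\cdot)$ on discrete test tuples. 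Applying the inf-sup~\eqref{eq:big-inf-sup-1} then reduces the task to bounding each summand of $\blfbig(\eta_u,\eta_\omega,\eta_\sigma,\eta_p;\tilde v,\tilde\eta,\tilde\tau,\tilde q)$ by $h^s \|(0,\omega,\sigma,p)\|_{\nu,s}$ times the test norm.

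The crucial cancellations that remove $\|u\|_{H^{s+1}}$ from the right-hand side are the following. First, $\blfone(\eta_u,\tilde q) = (\div\eta_u,\tilde q) = 0$ by the RT commuting property. Second, and central to pressure-robustness, $\blfone(\tilde v,\eta_p) = (\div\tilde v,\eta_p) = 0$ since $\div V_h \subseteq \Poly^k(\mesh)$ while $\eta_p$ is elementwise $L^2$-orthogonal to $\Poly^k(\mesh)$. The terms $\ablf(\eta_\sigma,\tilde\tau)$ and $\blftwo(\eta_\sigma,(\tilde v,\tilde\eta))$ are handled by Lemma~\ref{lem::continuity} combined with Theorem~\ref{the::sigmainterpolation}. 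The remaining term $\blftwo(\tilde\tau,(\eta_u,\eta_\omega))$ splits as $\blftwo(\tilde\tau,(\eta_u,0)) + \blftwo(\tilde\tau,(0,\eta_\omega))$; the latter reduces via~\eqref{eq::blftwoequione} to $\sum_T(\tilde\tau,\eta_\omega)_T$ and is bounded by $h^s\|\omega\|_{H^s}\|\tilde\tau\|$ using standard $L^2$-projection estimates.

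I expect the main obstacle to be showing $\blftwo(\tilde\tau,(\eta_u,0)) = 0$ for every $\tilde\tau \in \Sigma_h^+$. For $\tilde\tau \in \Sigma_h$, representation~\eqref{eq::blftwoequione} suffices: the interior contribution $(\div\tilde\tau,\eta_u)_T$ vanishes because $\div\tilde\tau \in \Poly^{k-1}(T,\rr^d)$ is annihilated by the interior RT moments of $\eta_u$, and the jump contribution $(\jump{\tilde\tau_{nn}},(\eta_u)_n)_F$ vanishes because $\jump{\tilde\tau_{nn}} \in \Poly^k(F)$ is annihilated by the facet RT moments. For the enrichment piece $\tilde\tau \in \delta\Sigma_h$, representation~\eqref{eq::blftwoequitwo} is preferable: the facet contribution vanishes by Lemma~\ref{lem::deltaspacenormtang}, and using $\trace{\tilde\tau}=0$ together with $\div\eta_u = 0$ one rewrites $\tilde\tau:\nabla\eta_u = \curl(\curl(r_h)B):\nabla\eta_u$; a single row-wise integration by parts sees the volume term vanish via $\div\circ\curl = 0$ and the boundary term vanish via the vanishing normal trace of $\curl(qB)$ from Lemma~\ref{lem::bubbletransform}. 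Assembling these pieces yields $\|(e_u,e_\omega,e_\sigma,e_p)\|_* \lesssim h^s\|(0,\omega,\sigma,p)\|_{\nu,s}$, and a final triangle inequality absorbing the interpolation errors $\|\sigma - I_{\Sigma_h}\sigma\|$ and $\|p-\Pi^k p\|$ delivers~\eqref{eq:1}.
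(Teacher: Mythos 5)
Your proposal is correct and follows essentially the same route as the paper's proof: interpolants, Galerkin orthogonality from consistency, the inf-sup condition \eqref{eq:big-inf-sup-1}, term-by-term treatment of the approximation errors (RT moments killing $b_1(a^u,\cdot)$ and the $\Sigma_h$-part of $b_2(\cdot,(a^u,0))$, orthogonality of the $L^2$-projection killing $b_1(\tilde v, a^p)$, and the remaining terms bounded by the interpolation estimates), followed by a concluding triangle inequality. The only differences are minor: you bound $b_2(a^\sigma,(\tilde v,\tilde\eta))$ by extending the continuity argument of Lemma~\ref{lem::continuity} with the facet bounds of Theorem~\ref{the::sigmainterpolation}, where the paper instead decomposes with $\Pi^{k-1}$ and $\Pi^0$; and your separate integration-by-parts treatment of the $\delta\Sigma_h$ part of $b_2(\tilde\tau,(a^u,0))$ is in fact more careful than the paper's one-line appeal to the Raviart--Thomas degrees of freedom, since for the bubble enrichment $\div\tilde\tau$ and $\tilde\tau_{nn}$ exceed the polynomial degrees $k-1$ and $k$ annihilated by those moments.
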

\begin{proof}
  Let $e_h^\sigma = I_{\Sigma_h}\sigma - \sigma_h$,
  $e_h^u = I_{V_h}u - u_h$,
  $e_h^\omega = \Pi^k\omega - \omega_h$, $e_h^p = \Pi^kp - p_h$ (where
  the two occurrences of $\Pi^k$ represent projections onto two
  different discrete spaces per our prior notation).
  Denoting the analogous approximation errors by 
  $a^\sigma = I_{\Sigma_h}\sigma - \sigma$,
  $a^u = I_{V_h} u - u$,
  $a^\omega = \Pi^k\omega - \omega$, and 
  $a^p = \Pi^kp - p$, observe that 
  Theorem~\ref{the::consistency} implies 
  \begin{align}
    \label{eq:6}
    \blfbig(e_h^u,
    & e_h^\omega, e_h^\sigma, e_h^p;\;
      \Velvarhtest,\eta_h,\Stressvarhtest,q_h)
      =
    \blfbig(a^u, a^\omega, a^\sigma, a^p;\;
      \Velvarhtest,\eta_h,\Stressvarhtest,q_h)
  \end{align}
  for any $v_h \in V_h, \eta_h \in W_h, \tau_h \in \Sigmaplus,$ and
  $ q_h \in Q_h$. The right hand side above is a sum of five terms
  $(\nu^{-1}a^\sigma, \tau_h) + b_1(a^u, q_h) + b_1(v_h, a^p) +
  b_2(\tau_h, (a^u, a^\omega)) + b_2(a^\sigma, (v_h, \eta_h)).$ The
  second term vanishes:
  $ b_1(a^u, q_h) = (\div(I_{V_h} u - u), q_h) = (\Pi^{k} \div (u) -
  \div (u), q_h)=0$ as $\div(u) =0$. The third term also vanishes:
  $b_1(v_h, a^p) = (\div(v_h), \Pi^k p - p)=0$ since
  $\div(v_h) \in \Poly^k(\mesh)$. The fourth term, due
  to~\eqref{eq::blftwoequione}, is
  \begin{align*}
    b_2(\tau_h, (a^u, a^\omega))
    &= (\tau, a^\omega) + \sum\limits_{T \in \mesh}
      (\divergence(\tau_h),  I_{V_h}u - u)_T
      - \!\sum_{E \in \facets}
      (\jump{(\tau_h)_{nn}}, (I_{V_h}u - u)\cdot n)_E
  \end{align*}
  where the last two terms vanish by the properties of the
  Raviart-Thomas d.o.f.s that define $I_{V_h}$, i.e., 
  $b_2(\tau_h, (a^u, a^\omega))
    =  (\tau_h, a^\omega).$
  The fifth term,
  due to~\eqref{eq::blftwoequitwo}, is 
  \begin{align*}
    b_2(a^\sigma, (v_h, \eta_h))
    & =
      (a^\sigma, \eta_h - \grad v_h) 
      + \sum\limits_{E \in \facets}
      (a^\sigma_{nt}, \jump{(v_h)_t})_E
  \end{align*}
  Writing
  $(a^\sigma, \eta_h - \grad v_h) = (a^\sigma, \eta_h) + (a^\sigma,
  (\Pi^{k-1} - \id) \grad v_h ) -(a^\sigma, \Pi^{k-1} \grad v_h),$
  note that by the d.o.f.s of Theorem~\ref{theo::unisolvent}, the last term
  $(a^\sigma, \Pi^{k-1} \grad v_h)$  is
  zero, and moreover,
  $ (a^\sigma, \eta_h) = (a^\sigma, \eta_h - \Pi^0\eta_h)$.
  Incorporating these observations on each term
  into~\eqref{eq:6}, we obtain
  \begin{equation}
    \label{eq:3}
    \begin{aligned}
      \blfbig(e_h^u, e_h^\omega, e_h^\sigma, e_h^p;
      \;
      \Velvarhtest,\eta_h,\Stressvarhtest,q_h)
      & = (\nu^{-1}a^\sigma, \tau_h) + (\tau_h, a^\omega) 
        + \sum\limits_{F \in \facets} (a^\sigma_{nt}, \jump{(v_h)_t})_F
        \\ 
      & 
        + (a^\sigma, \eta_h - \Pi^0\eta_h) 
        + (a^\sigma,  (\Pi^{k-1} - \id) \grad v_h)
    \end{aligned}    
  \end{equation}
  
  We now proceed to estimate the right hand side of~\eqref{eq:3}.
  By~\eqref{eq:singlenorms-1} and Lemma~\ref{lem:gradofRT},
  \begin{gather*}
    \|\eta_h - \Pi^0\eta_h\|
     \lesssim h \| \nabla \eta_h \|_h
      \lesssim \inf\limits_{\tilde{v}_h \in V_h} \| (\tilde{v}_h,
      \eta_h) \|_{U_h}
      \le \| (v_h, \eta_h) \|_{U_h},
    \\
    \|(\Pi^{k-1} - \id) \grad v_h \|_h
     \lesssim \| \div(v_h)\|^2
      \lesssim \| \eps(v_h)\|_h^2
      \le \| (v_h, \eta_h) \|_{U_h}.
  \end{gather*}
  Using these after an application of the Cauchy-Schwarz inequality,
  \eqref{eq:3} yields
  \begin{align}
    \nonumber
    \blfbig(e_h^u,
    & e_h^\omega, e_h^\sigma, e_h^p;\;
      \Velvarhtest,\eta_h,\Stressvarhtest,q_h)
    \\ \nonumber
    & \lesssim
      \bigg[
      \frac 1 \nu
      \bigg(
      \| a^\sigma\|^2
      +
      \sum_{F \in \facets} h \|  a^\sigma_{nt} \|_F^2
      \bigg)
      +
      \nu \| a^\omega\|^2
      \bigg]^{1/2}
      \left(
      \frac 1 \nu
      \| \tau_h\|^2
      + \nu \| (v_h, \eta_h)  \|_{U_h}^2
      \right)^{1/2}
    \\ \label{eq:8}
    & \lesssim
      \bigg(\frac{1}{\sqrt{\nu}}
      h^s \| \sigma\|_{H^s(\mesh)}
      +
      \sqrt{\nu}
      h^s \| \omega\|_{H^s(\mesh)}
      \bigg)
      \| (v_h, \eta_h, \tau_h, q_h) \|_*,
  \end{align}
  where we have used Theorem~\ref{the::sigmainterpolation} and the 
  approximation property of $\Pi^k$ in the last step.

  To complete the proof, we apply triangle inequality starting from
  the left hand side of~\eqref{eq:1}, to get 
  \begin{align}
    \nonumber 
    \frac{1}{\nu}
     \| \Stressvar - \Stressvarh \|
    & + \frac{1}{\nu} \| p - p_h \|
    + \| (e_h^u, e_h^\omega) \|_{U_h}
    \le
      \frac{1}{\nu}
      \big(\| a^\sigma \|  + \| a^p\|
      +  \| e_h^\sigma \| + \| e_h^p\|
      \big)
    + \|  (e_h^u, e_h^\omega)\|_{U_h}
    \\ \label{eq:7}
    & \lesssim 
      \frac{h^s}{\nu}
      \big(\| \sigma\|_{H^s(\mesh)} + \| p \|_{H^s(\mesh)}
      \big)
      +
      \frac{1}{\sqrt{\nu}}
      \|(e_h^u,  e_h^\omega, e_h^\sigma, e_h^p) \|_*
  \end{align}
  again using~Theorem~\ref{the::sigmainterpolation}. Bounding the last term above
  using \eqref{eq:big-inf-sup-1} and ~\eqref{eq:8},
  \begin{align*}
    \frac{1}{\sqrt{\nu}}
    \|(e_h^u,  e_h^\omega, e_h^\sigma, e_h^p) \|_*
    & \lesssim    \sup
      \limits_{\substack{\tilde{v}_h \in V_h, \; \tilde{\eta}_h \in W_h \\
    \tilde{\tau}_h \in \Sigmaplus,\;\tilde{q}_h \in Q_h    }}
    \frac{ \blfbig(e_h^u, e_h^\omega, e_h^\sigma, e_h^p;
    \Velvarhtest,\eta_h,\Stressvarhtest,q_h)}
    { \sqrt{\nu} \|(v_h,\eta_h,\tau_h, q_h) \|_*}
    \lesssim h^s \| (0, \omega, \sigma, p)\|_{\nu, s},
  \end{align*}
  the proof is complete.
\end{proof}

\begin{remark}[Convergence in standard norms]
  \label{rem:superconvergence}
  Using also Lemma~\ref{lem::singlenorms}'s
  estimate~\eqref{eq:singlenorms-2}, a consequence of
  the global discrete Korn inequality,~\eqref{eq:1} implies 
  \begin{align}
    \label{eq:9}
    \frac{1}{\nu}\Sigmanormh{ \Stressvar - \Stressvarh}
    +
    \frac{1}{\nu}\|p - p_h \|
    +
    \| \omega - \omega_h \| + \| u_h - I_{V_h}u \|_{V_h}
    \lesssim
    h^{k+1} \| (0, \omega, \sigma, p) \|_{\nu, s}
  \end{align}
  under the assumptions of Theorem~\ref{the::errorconvergence} for a
  sufficiently smooth solution.  Note that even though the optimal
  rate for $\|u - u_h\|_{1,h,\eps}$ is only $\mathcal{O}(h^{k})$,
  \eqref{eq:9} gives a {\em superconvergent} rate of
  $\mathcal{O}(h^{k+1})$ for $\| u_h - I_{V_h}u \|_{1,h,\eps}$.
\end{remark}

\begin{theorem}[Pressure robustness] \label{the::presrobust}
  Under the same assumptions as Theorem~\ref{the::errorconvergence}, 
  \begin{align*}
    \frac{1}{\nu}\Sigmanormh{ \Stressvar - \Stressvarh}
    &+ \| \omega - \omega_h \| + \| u_h - I_{V_h}u \|_{V_h}
      \lesssim h^s \| (0, \omega, \sigma, 0)\|_{\nu, s}.
  \end{align*}
\end{theorem}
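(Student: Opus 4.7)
The plan is to rerun the argument of Theorem~\ref{the::errorconvergence} but replace the full inf-sup bound \eqref{eq:big-inf-sup-1} by its pressure-free counterpart \eqref{eq:big-inf-sup-2}, which is applicable on the divergence-free subspace $V_h^0$. The only thing to verify at the outset is that the discrete velocity error $e_h^u := I_{V_h} u - u_h$ actually lies in $V_h^0$. This is immediate: the commuting-diagram property of the Raviart-Thomas interpolant gives $\div (I_{V_h} u) = \Pi^k \div (u) = 0$ since $\div (u)=0$, while exact mass conservation gives $\div (u_h)=0$. Hence $\div(e_h^u)=0$ and $(e_h^u, e_h^\omega, e_h^\sigma, 0) \in V_h^0 \times W_h \times \Sigmaplus \times \{0\}$.

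Next, by subtracting the discrete equations~\eqref{eq::discrmixedstressstokesweak} from Theorem~\ref{the::consistency} (Galerkin orthogonality) and rewriting the errors in the interpolation/discrete-error decomposition $u - u_h = a^u - e_h^u$, etc., we obtain, for every $(\tilde v_h, \tilde \eta_h, \tilde \tau_h, 0) \in V_h^0 \times W_h \times \Sigmaplus \times \{0\}$,
\begin{align*}
\blfbig(e_h^u, e_h^\omega, e_h^\sigma, e_h^p;\; \tilde v_h, \tilde \eta_h, \tilde \tau_h, 0)
= \blfbig(a^u, a^\omega, a^\sigma, a^p;\; \tilde v_h, \tilde \eta_h, \tilde \tau_h, 0).
\end{align*}
The crucial observation is that, because $\tilde v_h \in V_h^0$, both $\blfone(\tilde v_h, e_h^p)$ and $\blfone(\tilde v_h, a^p)$ vanish, so the pressure (its discrete error \emph{and} its approximation error) drops out entirely:
\begin{align*}
\blfbig(e_h^u, e_h^\omega, e_h^\sigma, 0;\; \tilde v_h, \tilde \eta_h, \tilde \tau_h, 0)
= \blfbig(a^u, a^\omega, a^\sigma, 0;\; \tilde v_h, \tilde \eta_h, \tilde \tau_h, 0).
\end{align*}
This is the pressure-free analog of \eqref{eq:6}, and it is exactly the manifestation of pressure robustness at the variational level.

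The right-hand side is now estimated term by term precisely as in the proof of Theorem~\ref{the::errorconvergence}, because the steps that led from \eqref{eq:6} through \eqref{eq:3} to the bound \eqref{eq:8} never actually used $a^p$ (the $\blfone(\tilde v_h, a^p)$ contribution had already been discarded using $\div \tilde v_h \in \Poly^k(\mesh)$). Hence
\begin{align*}
\blfbig(e_h^u, e_h^\omega, e_h^\sigma, 0;\; \tilde v_h, \tilde \eta_h, \tilde \tau_h, 0)
\lesssim \sqrt{\nu}\, h^s \| (0,\omega,\sigma,0)\|_{\nu,s}\;
\|(\tilde v_h, \tilde \eta_h, \tilde \tau_h, 0)\|_*,
\end{align*}
and combining with the pressure-free inf-sup bound \eqref{eq:big-inf-sup-2} gives
$\|(e_h^u, e_h^\omega, e_h^\sigma, 0)\|_* \lesssim \sqrt{\nu}\, h^s \| (0,\omega,\sigma,0)\|_{\nu,s}$.

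Unpacking the definition of $\|\cdot\|_*$ yields, at once, $\nu^{-1}\|e_h^\sigma\| + \|(e_h^u, e_h^\omega)\|_{U_h} \lesssim h^s \| (0,\omega,\sigma,0)\|_{\nu,s}$. The three claimed quantities now follow by triangle inequality together with the approximation estimates from Theorem~\ref{the::sigmainterpolation} (for $a^\sigma$) and the standard $L^2$-projection bound (for $a^\omega$); for the $\|\omega - \omega_h\|$ term, we additionally invoke the global discrete Korn inequality via \eqref{eq:singlenorms-2} of Lemma~\ref{lem::singlenorms} to control $\|e_h^\omega\|$ by $\|(e_h^u, e_h^\omega)\|_{U_h}$. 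No genuinely new difficulty arises; the only substantive point is the passage to $V_h^0$ in the inf-sup, which is precisely where the $H(\div)$-conforming, exactly divergence-free velocity space pays off.
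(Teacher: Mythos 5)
Your proposal is correct and follows essentially the same route as the paper: the paper's (very terse) proof likewise reruns the argument of Theorem~\ref{the::errorconvergence} with the pressure error omitted and replaces the inf-sup bound~\eqref{eq:big-inf-sup-1} by its divergence-free counterpart~\eqref{eq:big-inf-sup-2}. You have simply made explicit the details the paper leaves implicit, namely that $\div(e_h^u)=0$ so that~\eqref{eq:big-inf-sup-2} applies, and that all pressure contributions vanish when testing with $\tilde v_h \in V_h^0$.
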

\begin{proof}
  Proceeding along the lines of the proof of
  Theorem~\ref{the::errorconvergence}, omitting the pressure error,
  we obtain, instead of~\eqref{eq:7}, 
  \begin{align*}
    \frac{1}{\nu}
     \| \Stressvar - \Stressvarh \|
    + \| (e_h^u, e_h^\omega) \|_{U_h}
      \lesssim 
      \frac{h^s}{\nu}
      \| \sigma\|_{H^s(\mesh)} 
      +
      \frac{1}{\sqrt{\nu}}
      \|(e_h^u,  e_h^\omega, e_h^\sigma, 0) \|_*.
  \end{align*}
  We may now complete the proof as before by
  using~\eqref{eq:big-inf-sup-2} 
  instead of~\eqref{eq:big-inf-sup-1}.
\end{proof}

\section{Postprocessing} \label{sec::postprocessing}

In this section we describe and analyze a postprocessing for the
discrete velocity.  While for the raw solution $u_h$, we may only
expect $\|u - u_h\|_{1,h,\eps}$ to go to zero at the rate
$\mathcal{O}(h^{k}),$ we will show that a locally postprocessed
velocity $u_h^*$ has error $\|u - u_h^*\|_{1,h,\eps}$ that converges
to zero at the higher rate $\mathcal{O}(h^{k+1})$ for sufficiently
regular solutions.  The key to obtain this enhanced accuracy, as
in~\cite{Stenberg1988}, is the $O(h^{k+1})$-superconvergence of
$\| u_h - I_{V_h}u\|_{1,h,\eps}$ -- see
Remark~\ref{rem:superconvergence}.  Finally, we shall also show that
$u_h^*$ retains the prized structure preservation properties of exact
mass conservation and pressure robustness.

The crucial ingredient is a reconstruction operator
(see~\cite{ledlehrschoe2017relaxedpartI,ledlehrschoe2018relaxedpartII})
whose properties are summarized in the next lemma.
Let
\begin{align*}
  V_h^*
  & =
    H_0(\divergence, \om) \cap
    \Poly^{k+1}(\mesh, \rr^d), \text{ and }
  \\
  V_h^{*,-}
  &=  \{ v_h \in
    \Poly^{k+1}(\mesh, \rr^d):\; \Pi^{k}\jump{(v_h)_n}=0, \text{ for all } F \in \facets \}
\end{align*}
denote the BDM space (one order higher) and its ``relaxed'' analogue,
respectively. The next result is a consequence of \cite[Lemmas~3.3
and~4.8]{ledlehrschoe2017relaxedpartI} and the Korn
inequality~\eqref{eq:discrete_Korn}.

\begin{lemma} \label{lem::reconstruction}
  There exists an operator $\mathcal{R}: V_h^{*,-} \rightarrow V_h^*,$
  whose application is computable element-by-element, satisfying
\begin{enumerate}
\item \label{item:lem::reconstruction-normbd}
  $\| \mathcal{R} v_h\|_{1,h,\eps} \lesssim \| v_h \|_{1,h,\eps},$ for
  al $v_h \in V_h^{*,-}$,
\item \label{item:lem::reconstruction-projection}
  $\mathcal{R} v_h^* = v_h^*$ for all $v_h^* \in V_h^*$,  and 
\item \label{item:lem::reconstruction-div0}
  whenever the local (element-wise) property $\divergence(v_h|_T) = 0$
  holds for all $T \in \mesh$ and all $v_h \in V_h^{*,-}$, the
  global property $\divergence(\mathcal{R}v_h) = 0$ holds.
\end{enumerate}
\end{lemma}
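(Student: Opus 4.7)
The plan is to follow the construction of \cite{ledlehrschoe2017relaxedpartI}, which defines $\mathcal{R}$ through degrees of freedom of the BDM space $V_h^*$. Concretely, for any $v_h \in V_h^{*,-}$, I would set $\mathcal{R}v_h$ to be the unique element of $V_h^*$ whose element-interior BDM moments (those dual to $\nabla \Poly^k(T) + \{q \in \Poly^k(T,\rr^d): \div q = 0, \, q\cdot n|_{\partial T}=0\}$, or any equivalent interior basis) agree with those of $v_h|_T$ on each $T\in\mesh$, and whose facet moments against $\Poly^k(F,\rr)$ are obtained by replacing the one-sided normal trace $(v_h)_n$ by the facet average $\{v_h\cdot n\}$. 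Since this is an element-wise procedure on the DOF side, followed by assembling a single $H(\div)$-conforming function, the computation is local. Property \eqref{item:lem::reconstruction-projection} is then immediate: for $v_h^*\in V_h^*$, the normal trace is already single-valued on every facet, so $\{v_h^*\cdot n\} = v_h^*\cdot n$ and $\mathcal{R}v_h^* = v_h^*$ by unisolvence of the BDM DOFs.

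For property \eqref{item:lem::reconstruction-div0}, I would test $\div(\mathcal{R}v_h)|_T \in \Poly^k(T)$ against an arbitrary $q\in \Poly^k(T)$ via integration by parts:
\begin{equation*}
\int_T \div(\mathcal{R}v_h)\,q\dx = -\int_T \mathcal{R}v_h\cdot \nabla q \dx + \int_{\partial T} (\mathcal{R}v_h\cdot n)\,q\ds.
\end{equation*}
The volume term is unchanged from the analogous expression for $v_h$, as $\nabla q \in \Poly^{k-1}(T,\rr^d)$ and the interior moments were preserved by construction. The boundary term differs from the corresponding one for $v_h$ by $-\tfrac{1}{2}\sum_{F\subset\partial T}\int_F \jump{v_h\cdot n}\,q\ds$, and this vanishes because $q|_F\in\Poly^k(F)$ and $\Pi^k_F\jump{v_h\cdot n}=0$ by definition of $V_h^{*,-}$. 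Hence if every $\div(v_h|_T)=0$, then $\div(\mathcal{R}v_h|_T)=0$ for each $T$; combined with $\mathcal{R}v_h\in H_0(\div,\om)$ this yields the global result.

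For property \eqref{item:lem::reconstruction-normbd}, the key estimate is already \cite[Lemma~4.8]{ledlehrschoe2017relaxedpartI}, which establishes the stability bound in the broken $H^1$-norm built from $\nabla$ rather than $\eps$. Specifically, that lemma provides
\begin{equation*}
\| \nabla \mathcal{R} v_h\|_h^2 + \sum_{F\in\facets} h^{-1}\|\jump{\mathcal{R}v_h}\|_F^2 \;\lesssim\; \| \nabla v_h\|_h^2 + \sum_{F\in\facets} h^{-1}\|\jump{v_h}\|_F^2,
\end{equation*}
by scaling the local polynomial correction against its defining facet-jump data and using an inverse/trace inequality. To pass to the $\|\cdot\|_{1,h,\eps}$ norm, I would apply the trivial bound $\|\eps(\mathcal{R}v_h)\|_h \le \|\nabla \mathcal{R}v_h\|_h$ on the left, and on the right invoke the discrete Korn inequality~\eqref{eq:discrete_Korn} to dominate $\|\nabla v_h\|_h^2 + \sum_F h^{-1}\|\Pi_F^1\jump{v_h}\|_F^2$ by $\|v_h\|_{1,h,\eps}^2$; the remaining higher-mode jump contributions $\|(\id - \Pi_F^1)\jump{v_h}\|_F^2$ are handled by the standard rigid-displacement estimate~\eqref{eq::korntwo} as in the proof of Lemma~\ref{lem::normeqvel}.

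The main technical obstacle, and the step I expect to require the most care, is the stability estimate \eqref{item:lem::reconstruction-normbd}: one must control a non-conforming reconstruction operator in an $\eps$-based norm although the natural stability proof in \cite{ledlehrschoe2017relaxedpartI} is phrased with $\nabla$. The interface between the two is precisely the global discrete Korn inequality~\eqref{eq:discrete_Korn}, applicable because the facet jumps of $v_h\in V_h^{*,-}$ have vanishing normal component after projection, so only the tangential components of the jumps -- which are accounted for in $\|\cdot\|_{1,h,\eps}$ -- contribute to Korn's right-hand side.
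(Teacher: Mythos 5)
Your proposal is correct and follows essentially the same route as the paper, which gives no detailed proof but states that the lemma is a consequence of the reconstruction operator of Lemmas~3.3 and~4.8 in \cite{ledlehrschoe2017relaxedpartI} together with the discrete Korn inequality~\eqref{eq:discrete_Korn}; your elaboration of the averaging construction, the divergence-preservation argument, and the passage from the $\nabla$-based stability bound to the $\eps$-based norm via~\eqref{eq:discrete_Korn} and~\eqref{eq::korntwo} is exactly the intended argument. The only point deserving care is that for $\BDM^{k+1}$ the facet moments reach order $k+1$ while $V_h^{*,-}$ only constrains the jump up to order $k$, so the averaging acts nontrivially precisely on the highest-order facet moments; this does not affect your divergence argument, which only tests against $q|_F\in\Poly^k(F)$.
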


A simple choice of $\mathcal{R}$ is given by the classical BDM
intepolant. This was used in~\cite{guzman2016h}.  Another choice of
$\mathcal{R}$, given in~\cite{ledlehrschoe2017relaxedpartI}, based
on a simple averaging of coefficients, is significantly less
expensive for high orders.

The postprocessed solution $u_h^*\in V_h^*$ is given in two steps as
follows. First, using the computed $\sigma_h$ and $u_h$, solve the
{\em local} (see Remark~\ref{rem:postprocessing})
minimization problem
\begin{align} \label{eq::minimization}
  u_h^{*,-} := \argmin\limits_{\substack{v_h^{*,-} \in V_h^{*,-}
  \\
  I_{V_h}(v_h^{*,-}) = u_h} } \| \nu \eps(v_h^{*,-}) - \sigma_h \|^2_T.
\end{align}
Second, apply the reconstruction and set $u_h^{*} :=
\mathcal{R}(u_h^{*,-})$.


%
\begin{theorem}\label{the::postconvergence}
  Suppose the assumptions of Theorem~\ref{the::errorconvergence} hold.
  Then
  $u_h^* \in V_h^*,$ $\divergence(u_h^*) = 0,$ and for
  $s = \min(m-1,k+1)$ we have the pressure-robust error estimate
  \begin{align*}
    \| u - u_h^* \|_{1,h,\eps} \lesssim h^s
    \| (u, \omega, \sigma, 0)\|_{\nu, s}.
  \end{align*} 
\end{theorem}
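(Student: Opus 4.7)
The plan is as follows. The first two claims, $u_h^*\in V_h^*$ and $\divergence(u_h^*)=0$, follow directly from the construction. By definition $u_h^*=\mathcal{R}(u_h^{*,-})\in V_h^*$. For the divergence property I observe that $u_h^{*,-}|_T\in\Poly^{k+1}(T,\rr^d)$ forces $\divergence(u_h^{*,-}|_T)\in\Poly^k(T)$; the Raviart--Thomas commuting identity $\divergence(I_{V_h}v)=\Pi^k\divergence(v)$, where $\Pi^k$ denotes the $L^2$-projection onto $\Poly^k(\mesh)$, together with $\divergence(u_h)=0$ then gives
\[
\divergence(u_h^{*,-})|_T=\Pi^k\divergence(u_h^{*,-}|_T)=\divergence(I_{V_h}u_h^{*,-})|_T=\divergence(u_h)|_T=0
\]
for every $T\in\mesh$, so item~(\ref{item:lem::reconstruction-div0}) of Lemma~\ref{lem::reconstruction} yields $\divergence(u_h^*)=0$.

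For the error estimate my strategy is to insert the BDM interpolant $I^*u\in V_h^*$ of order $k+1$ as a comparison point. Standard approximation gives $\|u-I^*u\|_{1,h,\eps}\lesssim h^s\|u\|_{H^{s+1}(\mesh)}$, and since $I^*u\in V_h^*$, items~(\ref{item:lem::reconstruction-projection}) and~(\ref{item:lem::reconstruction-normbd}) of Lemma~\ref{lem::reconstruction} give $I^*u-u_h^*=\mathcal{R}(I^*u-u_h^{*,-})$ and $\|I^*u-u_h^*\|_{1,h,\eps}\lesssim\|I^*u-u_h^{*,-}\|_{1,h,\eps}$. The task thus reduces to bounding the broken-norm distance from $I^*u$ to $u_h^{*,-}$, which I split into an $\|\eps(\cdot)\|_h$-part and a tangential jump part. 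For the $\|\eps(\cdot)\|_h$-part I test the minimization defining $u_h^{*,-}$ against the admissible competitor $\tilde u:=I^*u+(u_h-I_{V_h}I^*u)\in V_h^{*,-}$ (noting $I_{V_h}\tilde u=u_h$): using $\sigma=\nu\eps(u)$ and two triangle inequalities yields
\[
\|\nu\eps(u_h^{*,-}-u)\|_T\le\|\nu\eps(I^*u-u)\|_T+\|\nu\eps(u_h-I_{V_h}I^*u)\|_T+2\|\sigma-\sigma_h\|_T.
\]
Summing over $T$ and invoking BDM approximation, stability of $I_{V_h}$ (to control $I_{V_h}(I^*u-u)$), the pressure-robust superconvergence bound $\|u_h-I_{V_h}u\|_{V_h}\lesssim h^s\|(0,\omega,\sigma,0)\|_{\nu,s}$ from Theorem~\ref{the::presrobust} (see Remark~\ref{rem:superconvergence}), and the pressure-robust stress estimate of the same theorem, will deliver the desired $O(h^s)$ bound on $\|\eps(I^*u-u_h^{*,-})\|_h$ against $\|(u,\omega,\sigma,0)\|_{\nu,s}$.

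I expect the tangential jump contribution $\sum_{F\in\facets}h^{-1}\|\jump{(I^*u-u_h^{*,-})_t}\|_F^2$ to be the main obstacle, since functions in $V_h^{*,-}$ have no built-in tangential continuity. The plan is to split $\delta:=I^*u-u_h^{*,-}=I_{V_h}\delta+(\delta-I_{V_h}\delta)$. The piece $I_{V_h}\delta=I_{V_h}(I^*u-u)+(I_{V_h}u-u_h)$ lies in $V_h$, so its jump contribution is already absorbed by $\|I_{V_h}\delta\|_{V_h}^2$, which is $O(h^{2s})$ by the same ingredients listed above. The remainder $\delta-I_{V_h}\delta$ lies element-wise in $\ker I_{V_h,T}$; since rigid motions do not lie in this kernel, a reference-element norm-equivalence combined with standard scaling yields a Poincar\'e--Korn-type estimate $\|v\|_T\lesssim h\|\eps(v)\|_T$ for $v\in\ker I_{V_h,T}$. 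Combined with the discrete trace inequality for polynomials this implies $\sum_{F\in\facets}h^{-1}\|\jump{v_t}\|_F^2\lesssim\|\eps(v)\|_h^2$. Applying this to $v=\delta-I_{V_h}\delta$ and combining with the $\|\eps(\cdot)\|_h$-bound already established closes the jump estimate; reassembling everything produces the claimed pressure-robust bound $\|u-u_h^*\|_{1,h,\eps}\lesssim h^s\|(u,\omega,\sigma,0)\|_{\nu,s}$.
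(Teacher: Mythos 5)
Your proposal is correct and follows the same architecture as the paper's proof: the divergence claim via the element-wise commuting property of the Raviart--Thomas interpolant (the paper gets the same identity by integrating against $q_h\in\Poly^k(T)$ and using the coincidence of the RT d.o.f.s), the reduction to $\|I_{V_h^*}u-u_h^{*,-}\|_{1,h,\eps}$ via items~(\ref{item:lem::reconstruction-normbd})--(\ref{item:lem::reconstruction-projection}) of Lemma~\ref{lem::reconstruction}, the quasi-optimality of the local minimization tested with the competitor $I_{V_h^*}u+u_h-I_{V_h}I_{V_h^*}u$, and the superconvergence $\|u_h-I_{V_h}u\|_{V_h}\lesssim h^s\|(0,\omega,\sigma,0)\|_{\nu,s}$ from Theorem~\ref{the::presrobust}. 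The one point of genuine divergence is the tangential-jump term: the paper decomposes $I_{V_h^*}u-u_h^{*,-}=(I_{V_h}u-u_h)+(\id-I_{V_h})(\id-\Pi^{\RM})(I_{V_h^*}u-u_h^{*,-})$, exploiting $(\id-I_{V_h})\RM=0$, and then uses a trace inequality together with \eqref{eq::kornone}--\eqref{eq::korntwo}; you instead invoke a Poincar\'e--Korn inequality $\|v\|_T\lesssim h\|\eps(v)\|_T$ on $\ker I_{V_h,T}$. Both rest on the same fact (for $k\ge 1$ no nonzero rigid motion is annihilated by the local RT interpolant), but your justification ``reference-element norm-equivalence combined with standard scaling'' is the one thin spot: $\eps(\cdot)$ does not commute with the Piola map, so a naive pullback produces $\hat\nabla$ rather than $\hat\eps$ and the Korn constant does not scale trivially. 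The clean repair is exactly the paper's mechanism: for $v\in\ker I_{V_h,T}$ write $v=(v-\Pi^{\RM}v)+\Pi^{\RM}v$, observe that $\Pi^{\RM}v=-I_{V_h}(v-\Pi^{\RM}v)$ because $I_{V_h}$ reproduces $\RM\subset\RT^k$ and $I_{V_h}v=0$, and then apply $L^2$-stability of $I_{V_h}$, the local Poincar\'e inequality, and \eqref{eq::kornone}. With that patch, your argument closes and yields the claimed pressure-robust bound.
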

\begin{proof}
  On any $T \in \mesh$, the condition $I_{V_h}(u_h^{*,-}) = u_h$
  implies that the Raviart-Thomas d.o.f.s applied to $u_h^{*,-}$ and
  $u_h$ coincide. Hence, for all $q_h \in \Poly^{k}(T, \rr)$,
  \begin{align*}
    (\divergence(u_h^{*,-}), q_h)_T
    &=
      -(u_h^{*,-}, \nabla q_h)_T
      +(u_h^{*,-} \cdot n,  q_h)_{\partial T}
      \\
    &= -(u_h , \nabla q_h)_T
      + (u_h \cdot n,  q_h)_{\d T}
      = (\divergence(u_h), q_h) = 0 
  \end{align*}
  as $\divergence(u_h) = 0$. Thus, Lemma \ref{lem::reconstruction}
  implies that $u_h \in V^*_h$ and $\div(u_h^*)=0$.

  It only remains to prove the error estimate.  Let $I_{V_h^*}$ be the
  standard $\BDM^{k+1}$ interpolator. Then,
  $u_h^* = \mathcal{R} u_h^{*,-}$ satisfies 
  \begin{align*}
    \| u - u_h^* \|_{1,h,\eps}
    &\le
      \| u - I_{V_h^*}u \|_{1,h,\eps} + \|\mathcal{R} ( I_{V_h^*}u  -
      u_h^{*,-}) \|_{1,h,\eps}
    && \text{ by
       Lemma~\ref{lem::reconstruction}~(\ref{item:lem::reconstruction-projection}),}
    \\
    &\lesssim \| u - I_{V_h^*}u \|_{1,h,\eps} + \|u  -  u_h^{*,-}
      \|_{1,h,\eps}
          && \text{ by
       Lemma~\ref{lem::reconstruction}~(\ref{item:lem::reconstruction-normbd})}.
  \end{align*}
  Since standard approximation estimates yield
  $\| u - I_{V_h^*}u \|_{1,h,\eps}  \lesssim h^s
    \| (u, 0, 0, 0)\|_{\nu, s}$,  we focus on the last term.
  A triangle inequality (where we add and subtract different functions in the element and facet terms) yields
  \begin{equation}
    \label{eq::stepone}
    \begin{aligned} 
      \|u  -  u_h^{*,-} \|^2_{1,h,\eps} &\lesssim  \sum\limits_{T \in \mesh}\frac{1}{\nu^2} \| \nu\eps(u) - \sigma_h  \|^2_{T} + \sum\limits_{T \in \mesh}\frac{1}{\nu^2} \| \sigma_h - \nu\eps(u_h^{*,-}) \|^2_{T}\\
      &+\sum\limits_{F \in \facets} \frac{1}{h} \| \jump{ (u -I_{V_h^*}u)_t} \|_F^2 +\sum\limits_{F \in \facets} \frac{1}{h}\| \jump{ (I_{V_h^*}u -u_h^{*,-} )_t} \|_F^2. 
    \end{aligned}    
  \end{equation}
  Naming the four sums on the right as $s_1,s_2, s_3$ and $s_4$,
  respectively, we proceed to estimate each.
  Obviously $s_1 = \nu^{-1} \| \sigma - \sigma_h\|
  \lesssim h^s
    \| (0, \omega, \sigma, 0)\|_{\nu, s}$ by 
  Theorem~\ref{the::presrobust}.

  To bound $s_2$, note that for any $w_h$ in the admissible set of the
  minimization problem~\eqref{eq::minimization}, we have
  $s_2 \le \nu^{-2}\| \sigma_h - \nu \eps(w_h) \|^2$. We choose
  $w_h = I_{V_h^*}u + u_h - I_{V_h}u \in V_h^* \subset V_h^{*,-}$.
  Since $I_{V_h} I_{V_h^*}u = I_{V_h}u$ implies  $I_{V_h} w_h = u_h$,
  the chosen $w_h$ is in the admissible set. Hence,
  \begin{align*}
   s_2
    & \le  \nu^{-2} \| \sigma_h - \nu\eps(w_h) \|^2
     \le
      \nu^{-2} \big( \| \sigma_h - \nu\eps(I_{V_h^*}u) \|
      +
      \| \nu\eps(u_h) - \nu\eps(I_{V_h}u) \| \big)^2
    \\
    &\lesssim
      \nu^{-2}  \| \sigma_h - \nu\eps(u) \|^2 +
      \nu^{-2}\| \nu\eps(u) - \nu\eps(I_{V_h^*}u) \|^2 +
      \nu^{-2}\| \nu\eps(u_h) - \nu\eps(I_{V_h}u) \|^2
    \\
    & = \nu^{-2} \| \sigma_h - \sigma\|^2 +
      \| u - I_{V_h^*}u \|_{1,h,\eps}^2 + \| u_h - I_{V_h}u \|_{1,h,\eps}^2,
  \end{align*}
  so a standard approximation estimate and
  Theorem~\ref{the::presrobust} yield
  $s_2 \lesssim h^s  \| (u, \omega, \sigma, 0)\|_{\nu, s}.$

  The same standard approximation estimate for $I_{V_h^*}$ also gives
  $s_3 \le \| u - I_{V_h^*} u \|_{1,h,\eps} \lesssim h^s \| (u,
  \omega, \sigma, 0)\|_{\nu, s}$. Hence it only remains to bound
  $s_4$. Observe that
  $I_{V_h^*}u -u_h^{*,-} = I_{V_h} (I_{V_h^*}u -u_h^{*,-} ) + (\id -
  I_{V_h}) (I_{V_h^*}u -u_h^{*,-} ) = (I_{V_h}u - u_h) + (\id -
  I_{V_h}) (I_{V_h^*}u -u_h^{*,-} ),$ because
  $I_{V_h} I_{V_h^*}u = I_{V_h}u$ and $I_{V_h} u_h^{*,-} = u_h$.  This
  implies, letting
  $a = (\id - I_{V_h}) (\id - \Pi^{{\RM}})(I_{V_h^*}u -u_h^{*,-} )$,
  the identity $I_{V_h^*}u -u_h^{*,-} = (I_{V_h}u - u_h) + a$ holds
  because $(\id - I_{V_h}) \RM =0$ (as $k \ge 1$). Hence
  \begin{align}
    \label{eq:11}
    s_4
    & \lesssim
      \| I_{V_h}u  - u_h \|_{1,h,\eps}^2
      +
      \sum\limits_{F \in \facets}
      h^{-1} \big\| \jump{ a_t }\big\|_F^2.
  \end{align}
  Since the first term can be bounded by
  Theorem~\ref{the::presrobust}, let us consider the last term.  On
  any facet $F$ adjacent to a mesh element $T$, a trace inequality
  yields
  $h^{-1} \big\| \jump{ a_t} \big\|_F^2 \le h^{-1} \| a_t \|_{\partial
    T}^2 \lesssim \| \nabla a \|_{T}^2 + h^{-2} \| a \|_{T}^2.$ Hence, 
  \begin{align*}
    h^{-1} \big\| \jump{ a_t} \big\|_F^2
    & \lesssim
      \| \nabla (\id - \Pi^{{\RM}})(I_{V_h^*}u -u_h^{*,-} ) \|_{T}^2 + h^{-2} \| (\id - \Pi^{{\RM}})(I_{V_h^*}u -u_h^{*,-} ) \|_{T}^2 \\
                                                       &\lesssim \| \eps (I_{V_h^*}u -u_h^{*,-} ) \|_{T}^2 
  \end{align*}
  where we have used the continuity properties of $I_{V_h}$, scaling
  arguments, \eqref{eq::kornone}, and an estimate analogous
  to~\eqref{eq::korntwo}. Using triangle inequality and 
  returning to~\eqref{eq:11}, 
  \begin{align*}
    s_4
    & \lesssim
      \| I_{V_h}u  - u_h \|_{1,h,\eps}^2
      + \| \eps (I_{V_h^*}u - u) \|_h^2 + \nu^{-2}\| \nu \eps (u) -
      \sigma_h\|_h^2 + \nu^{-2}\| \sigma_h - \nu\eps (u_h^{*,-} ) \|_h^2.
  \end{align*}
  The last two terms are $s_1$ and $s_2$, respectively. Hence the
  prior estimates, the standard approximation estimate for
  $I_{V_h^*}$, and Theorem~\ref{the::presrobust} shows
  $s_4 \lesssim h^s \| (u, \omega, \sigma, 0)\|_{\nu, s}.$
\end{proof}

\begin{remark}
  \label{rem:postprocessing}
  The restriction of the minimizer of~\eqref{eq::minimization} to an
  element $T$, namely $u_T^{*,-} := u_h^{*,-}|_T,$ can be computed
  using the following Euler-Lagrange equations.  Letting
  $ \Lambda_h^*(T) = \{ \lambda: \lambda|_F \in \Poly^k(F, \rr)$ on
  all facets $F \subset \partial T\}$, the function $u_T^{*,-}$ is the
  unique function in $\Poly^{k+1}(T, \rr^d)$, which together with
  $\ell_h^* \in \Poly^{k-1}(T, \rr^d)$ and $\lambda_h^* \in \Lambda_h^*(T)$,
  satisfies
  \begin{alignat*}{2}
    ( \nu\eps(u_T^*), \eps(v) )_T +
    (\ell_h^*,  v)_T + 
    (\lambda_h^*, v \cdot n)_{\d T}
    &= (\sigma_h,   \eps(v) )_T,
    \\
    (u_T^*, \wp)_T
    &= (u_h, \wp)_T, \\
    (u^*_T \cdot n, \mu)_{\d T} &=   ( u_h \cdot n, \mu)_{\d T},
  \end{alignat*}
  for all $v \in \Poly^{k+1}(T, \rr^d),$
  $\wp \in \Poly^{k-1}(T, \rr^d)$ and $\mu \in \Lambda_h^*(T)$. The
  last two equations are another way to express the constraint
  $I_{V_h} u_h^{*,-} = u_h$ in~\eqref{eq::minimization}.
\end{remark}

\section{Numerical exampels}

In this last section we present two numerical examples to verify our
method. All examples were implemented within the finite element
library NGSolve/Netgen, see \cite{netgen,schoeberl2014cpp11} and on
\url{www.ngsolve.org}. The computational domain is given by $\Omega =
[0,1]^d$ and  the velocity field is driven by the volume force determined by $f = -\div(\Stressvar) + \nabla \Presvar$ with the exact solution given by
\begin{align*}
  \Stressvar &= \visc\eps(\curl(\psi_2)),\quad \textrm{and} \quad  \Presvar := x^5 + y^5 - \frac{1}{3} \quad \textrm{for } d=2\\
  \Stressvar &= \visc\eps(\curl(\psi_3,\psi_3,\psi_3)), \quad \textrm{and} \quad  \Presvar := x^5 + y^5 +z^5 - \frac{1}{2} \quad \textrm{for } d=3.
\end{align*}
Here $\psi_2 :=x^2(x-1)^2y^2(y-1)^2$ and $\psi_3 :=x^2(x-1)^2y^2(y-1)^2z^2(z-1)^2$ defines a given potential in two and three dimensions respectively and we choose the viscosity $\nu = 10^{-3}$.

In Tables \ref{twodexample} and \ref{threedexample} we report the
errors in all the computed solution components for varying polynomial
orders $k=1,2,3$ in the two and the three dimensional cases,
respectively. As predicted by Theorem \ref{the::errorconvergence} and
Theorem \ref{the::postconvergence} the corresponding errors converge
at optimal order. Furthermore, the $L^2$-norm of error of the (postprocessed) velocity error converges at one order higher. 
Note that in three dimensions the errors are already quite small
already on the coarsest mesh. It appears that to get out of the 
preasymptotic regime and see the proper convergence rate, it
takes several steps.

\begin{table}
\begin{center}
\begin{subtable}{0.9\textwidth}
\footnotesize
\begin{tabular}{@{~}c@{~}|@{~}c@{~(}c@{)~}c@{~(}c@{)~}c@{~(}c@{)~}c@{~(}c@{)~}c@{~(}c@{)}}
      \toprule
  $|\mathcal{T}|$
  & $\| \nabla u - \nabla u_h^*\|_h$ & \footnotesize eoc
  &$\| u - u_h^*\|$ & \footnotesize eoc
  &$\| \sigma - \sigma_h\|$ & \footnotesize eoc
  &$\| \Presvar - \Presvarh\|$ & \footnotesize eoc
  &$\| \omega - \omega_h\|$ & \footnotesize eoc \\
\midrule
    \multicolumn{11}{c}{$k=1$}\\
20& \num{0.009902863275354638}&--& \num{0.0008392229939034794}&--& \num{0.010312466960396744}&--& \num{0.03441303228214327}&--& \num{0.008827275037994896}&--\\
80& \num{0.0035305629762117435}&\numeoc{1.4879474697552844}& \num{0.0001650894804110187}&\numeoc{2.345806013025177}& \num{0.0035791831556474005}&\numeoc{1.5266872201599668}& \num{0.009362753882968158}&\numeoc{1.8779501800952547}& \num{0.003224753279108695}&\numeoc{1.4527793631604784}\\
320& \num{0.0009503922416875517}&\numeoc{1.8933032867992963}& \num{2.3944303017503037e-05}&\numeoc{2.7854938472650708}& \num{0.0009408141443608543}&\numeoc{1.927648716739806}& \num{0.002377404964960747}&\numeoc{1.9775452630604289}& \num{0.0009240590715195726}&\numeoc{1.8031318007779626}\\
1280& \num{0.00025220378957418294}&\numeoc{1.9139331055354694}& \num{3.4001782223723697e-06}&\numeoc{2.8160001683316476}& \num{0.0002456179479367673}&\numeoc{1.9374937640549346}& \num{0.000596785141652696}&\numeoc{1.9941041496296363}& \num{0.00025736678061946184}&\numeoc{1.8441592294592353}\\
5120& \num{6.533973632118895e-05}&\numeoc{1.9485574156383147}& \num{4.6099406232374353e-07}&\numeoc{2.882790294319957}& \num{6.295522096878068e-05}&\numeoc{1.9640180531731288}& \num{0.00014935080989370941}&\numeoc{1.9985065557483128}& \num{6.863349183158698e-05}&\numeoc{1.9068411901094666}\\
    \midrule
  \multicolumn{11}{c}{$k=2$}\\
  20& \num{0.0022233180532418803}&--& \num{0.00010038685221944823}&--& \num{0.0018075725335161032}&--& \num{0.0037231340211480194}&--& \num{0.00145840604060015}&--\\
80& \num{0.0005032146154294715}&\numeoc{2.1434686158245717}& \num{1.0585431526209894e-05}&\numeoc{3.245418341613661}& \num{0.0003722622209445382}&\numeoc{2.2796624234705822}& \num{0.0005311576281535504}&\numeoc{2.8093055808259533}& \num{0.0002768736975362225}&\numeoc{2.397092529929767}\\
320& \num{6.655823868997209e-05}&\numeoc{2.9184846631391377}& \num{7.744534306321447e-07}&\numeoc{3.7727577836868944}& \num{5.0634161619243626e-05}&\numeoc{2.878136242351472}& \num{6.747790680282963e-05}&\numeoc{2.9766529373451784}& \num{4.1425908068076595e-05}&\numeoc{2.740622779780947}\\
1280& \num{8.355809373311833e-06}&\numeoc{2.9937657716700166}& \num{4.9323804751557606e-08}&\numeoc{3.972822493503286}& \num{6.371723935662283e-06}&\numeoc{2.9903553977156916}& \num{8.471067211643275e-06}&\numeoc{2.993799579853649}& \num{5.196616887090514e-06}&\numeoc{2.9948887146160157}\\
5120& \num{1.0432950155797786e-06}&\numeoc{3.0016324089703033}& \num{3.0810025950848047e-09}&\numeoc{4.0008122883395485}& \num{7.958407260327208e-07}&\numeoc{3.001132127398458}& \num{1.0600376182881933e-06}&\numeoc{2.998428272644474}& \num{6.445731725877459e-07}&\numeoc{3.0111566531914167}\\
    \midrule
  \multicolumn{11}{c}{$k=3$}\\
  20& \num{0.00041461110840970884}&--& \num{1.4432041688597988e-05}&--& \num{0.00023764732982608342}&--& \num{7.196988373849738e-05}&--& \num{0.00022403283910605226}&--\\
80& \num{4.783867967564853e-05}&\numeoc{3.1155092926538948}& \num{8.436872051505858e-07}&\numeoc{4.096423378271388}& \num{2.6976394394346334e-05}&\numeoc{3.13905275296886}& \num{5.700689004084194e-06}&\numeoc{3.658185123986187}& \num{2.627106790817884e-05}&\numeoc{3.0921634686810022}\\
320& \num{2.9560264215510807e-06}&\numeoc{4.01644650265318}& \num{2.6062418171114704e-08}&\numeoc{5.016665368596337}& \num{1.7295967880532224e-06}&\numeoc{3.96318987957517}& \num{3.647565093356667e-07}&\numeoc{3.966130669308764}& \num{1.711084861445255e-06}&\numeoc{3.940491629291239}\\
1280& \num{1.8617196676820212e-07}&\numeoc{3.9889514070850076}& \num{8.299954785406924e-10}&\numeoc{4.972723661372874}& \num{1.1152577173196006e-07}&\numeoc{3.9549867126645517}& \num{2.2927675254735363e-08}&\numeoc{3.991771738493527}& \num{1.133819783070296e-07}&\numeoc{3.91564806024593}\\
5120& \num{1.1724979142493852e-08}&\numeoc{3.98897859098612}& \num{2.6319348601454175e-11}&\numeoc{4.978907789079624}& \num{7.09771100495445e-09}&\numeoc{3.973879487149761}& \num{1.4350127545221598e-09}&\numeoc{3.9979546157977186}& \num{7.320933788024666e-09}&\numeoc{3.953019860065994}\\
\bottomrule
\end{tabular}
\caption{The $d=2$  example.} \label{twodexample}
\end{subtable}
\begin{subtable}{0.9\textwidth}  
\footnotesize
\begin{tabular}{@{~}c@{~}|@{~}c@{~(}c@{)~}c@{~(}c@{)~}c@{~(}c@{)~}c@{~(}c@{)~}c@{~(}c@{)}}
      \toprule
  $|\mathcal{T}|$
  & $\| \nabla u - \nabla u_h^*\|_h$ & \footnotesize eoc
  &$\| u - u_h^*\|$ & \footnotesize eoc
  &$\| \sigma - \sigma_h\|$ & \footnotesize eoc
  &$\| \Presvar - \Presvarh\|$ & \footnotesize eoc
  &$\| \omega - \omega_h\|$ & \footnotesize eoc \\
\midrule
  \multicolumn{11}{c}{$k=1$}\\
  28& \num{0.0015349129378525051}&--& \num{0.00013553036365765282}&--& \num{0.0014620066907744457}&--& \num{0.07450493074963317}&--& \num{0.0010571518970404754}&--\\
224& \num{0.0008111782170569502}&\numeoc{0.9200660098628043}& \num{5.416915520176647e-05}&\numeoc{1.3230726067108716}& \num{0.0008146666083537369}&\numeoc{0.8436682325324645}& \num{0.03105944573135321}&\numeoc{1.2623038220843439}& \num{0.0006697554943967613}&\numeoc{0.6584762689821013}\\
1792& \num{0.0003168837271946305}&\numeoc{1.3560653360341148}& \num{1.3159266227038611e-05}&\numeoc{2.0413925460548366}& \num{0.0003161871108793768}&\numeoc{1.3654312174979688}& \num{0.009518865089765845}&\numeoc{1.706170604451079}& \num{0.0003168724896876829}&\numeoc{1.0797320984784329}\\
14336& \num{9.195769681796024e-05}&\numeoc{1.7849113377403427}& \num{1.9253870348450005e-06}&\numeoc{2.772858659503397}& \num{8.982613775685127e-05}&\numeoc{1.8155713496221784}& \num{0.002533417862807191}&\numeoc{1.90970451961227}& \num{9.052034989304395e-05}&\numeoc{1.8075883466576945}\\
  114688& \num{2.384372161076711e-05}&\numeoc{1.9473608983973878}& \num{2.4849826742903404e-07}&\numeoc{2.9538407824513633}& \num{2.3142731339484638e-05}&\numeoc{1.9565761603207215}& \num{0.0006437098670992293}&\numeoc{1.9766025668256861}& \num{2.3436236065100434e-05}&\numeoc{1.9495012742403233}\\
    \midrule
  \multicolumn{11}{c}{$k=2$}\\
  28& \num{0.0005013890165629336}&--& \num{4.301511717938578e-05}&--& \num{0.0005763220104374992}&--& \num{0.006749483578828532}&--& \num{0.0004883840518419042}&--\\
224& \num{0.00020758834235332232}&\numeoc{1.2722049650201008}& \num{9.650726925770082e-06}&\numeoc{2.1561342475704035}& \num{0.0001579508339086626}&\numeoc{1.8673995651730226}& \num{0.0015501927278646272}&\numeoc{2.1223295320478717}& \num{0.00013523658294044473}&\numeoc{1.8525306211201085}\\
1792& \num{5.6951789868017195e-05}&\numeoc{1.8659123393405297}& \num{1.511987759247539e-06}&\numeoc{2.674191155366064}& \num{3.86875476364732e-05}&\numeoc{2.029534368029918}& \num{0.00026190251526978425}&\numeoc{2.5653457695744977}& \num{3.568187228628573e-05}&\numeoc{1.9222222459072054}\\
14336& \num{7.872544751941488e-06}&\numeoc{2.8548392247594516}& \num{1.058019998719267e-07}&\numeoc{3.8370076571554796}& \num{5.4193991689378905e-06}&\numeoc{2.8356644620603197}& \num{3.530495399346918e-05}&\numeoc{2.8910873732903175}& \num{5.243883677105361e-06}&\numeoc{2.766483729494196}\\
114688& \num{1.037903593272394e-06}&\numeoc{2.9231576097409904}& \num{6.995959166944021e-09}&\numeoc{3.9187012185811074}& \num{7.146729465521086e-07}&\numeoc{2.9227778313628456}& \num{4.496196263433237e-06}&\numeoc{2.973093719646426}& \num{7.018157184226596e-07}&\numeoc{2.901471518250042}\\
      \midrule
  \multicolumn{11}{c}{$k=3$}\\
  28& \num{0.00017591628643765745}&--& \num{1.2756623768693696e-05}&--& \num{0.00016684040492181786}&--& \num{0.0023674942267368614}&--& \num{0.00012673879302556537}&--\\
224& \num{5.75300018887787e-05}&\numeoc{1.6125026318842968}& \num{2.418206577408277e-06}&\numeoc{2.3992371503510994}& \num{4.4264989142323186e-05}&\numeoc{1.9142307460384556}& \num{0.00025140542569457335}&\numeoc{3.235273216944435}& \num{2.9834428178739327e-05}&\numeoc{2.086808150588493}\\
1792& \num{6.806454821350378e-06}&\numeoc{3.079339054092517}& \num{1.6824021060772411e-07}&\numeoc{3.8453430270335116}& \num{4.953349067192706e-06}&\numeoc{3.1596898698639224}& \num{2.9807358037288535e-05}&\numeoc{3.0762753721138627}& \num{3.622045410075866e-06}&\numeoc{3.0421015869801256}\\
14336& \num{5.74368577219584e-07}&\numeoc{3.566854829473553}& \num{7.31258795418667e-09}&\numeoc{4.523996678859669}& \num{4.111428734556125e-07}&\numeoc{3.590692565376722}& \num{2.0511984589008857e-06}&\numeoc{3.8611295200623945}& \num{3.0152815208714015e-07}&\numeoc{3.586440023341312}\\
114688& \num{3.9810139442644746e-08}&\numeoc{3.850768993929522}& \num{2.4647606368871644e-10}&\numeoc{4.89086261967543}& \num{2.7562896489240112e-08}&\numeoc{3.898840413207374}& \num{1.310788376662412e-07}&\numeoc{3.9679603929985845}& \num{2.034661277284434e-08}&\numeoc{3.889432160032821}\\
\bottomrule
\end{tabular}
\caption{The $d=3$ example.} \label{threedexample}
\end{subtable}
\caption{Convergence rates for the postprocessed velocity and all other
  solution components for $\nu = 10^{-3}$} \label{tab:2d3d}
\end{center}
\end{table}

\bibliographystyle{siam}
\bibliography{literature}

\begin{thebibliography}{10}

\bibitem{MR840802}
{\sc D.~N. Arnold, F.~Brezzi, and J.~Douglas, Jr.}, {\em P{EERS}: a new mixed
  finite element for plane elasticity}, Japan J. Appl. Math., 1 (1984),
  pp.~347--367.

\bibitem{MR2336264}
{\sc D.~N. Arnold, R.~S. Falk, and R.~Winther}, {\em Mixed finite element
  methods for linear elasticity with weakly imposed symmetry}, Math. Comp., 76
  (2007), pp.~1699--1723.

\bibitem{MR2449101}
{\sc D.~Boffi, F.~Brezzi, and M.~Fortin}, {\em Reduced symmetry elements in
  linear elasticity}, Commun. Pure Appl. Anal., 8 (2009), pp.~95--121.

\bibitem{brezzi2012mixed}
\leavevmode\vrule height 2pt depth -1.6pt width 23pt, {\em Mixed Finite Element
  Methods and Applications}, Springer Science \& Business Media, 2013.

\bibitem{MR2047078}
{\sc S.~C. Brenner}, {\em Korn's inequalities for piecewise {$H^1$} vector
  fields}, Math. Comp., 73 (2004), pp.~1067--1087.

\bibitem{brezzi1985two}
{\sc F.~Brezzi, J.~Douglas~Jr., and L.~D. Marini}, {\em Two families of mixed
  finite elements for second order elliptic problems}, Numerische Mathematik,
  47 (1985), pp.~217--235.

\bibitem{CockbGopal04}
{\sc B.~Cockburn and J.~Gopalakrishnan}, {\em A characterization of hybridized
  mixed methods for the {D}irichlet problem}, SIAM J. Numer. Anal., 42 (2004),
  pp.~283--301.

\bibitem{MR2629995}
{\sc B.~Cockburn, J.~Gopalakrishnan, and J.~Guzm\'{a}n}, {\em A new elasticity
  element made for enforcing weak stress symmetry}, Math. Comp., 79 (2010),
  pp.~1331--1349.

\bibitem{cockburn2005locally}
{\sc B.~Cockburn, G.~Kanschat, and D.~Sch{\"o}tzau}, {\em A locally
  conservative {LDG} method for the incompressible {Navier}-{Stokes}
  equations}, Mathematics of Computation, 74 (2005), pp.~1067--1095.

\bibitem{cockburn2007note}
\leavevmode\vrule height 2pt depth -1.6pt width 23pt, {\em A note on
  discontinuous {Galerkin} divergence-free solutions of the {Navier}--{Stokes}
  equations}, Journal of Scientific Computing, 31 (2007), pp.~61--73.

\bibitem{MR3194122}
{\sc B.~Cockburn and F.-J. Sayas}, {\em Divergence-conforming {HDG} methods for
  {S}tokes flows}, Math. Comp., 83 (2014), pp.~1571--1598.

\bibitem{Farhloulcanadian}
{\sc M.~Farhloul}, {\em Mixed and nonconforming finite element methods for the
  stokes problem}, Canadian Applied Mathematics Quarterly, 3 (Fall 1995).

\bibitem{MR1231323}
{\sc M.~Farhloul and M.~Fortin}, {\em A new mixed finite element for the
  {S}tokes and elasticity problems}, SIAM J. Numer. Anal., 30 (1993),
  pp.~971--990.

\bibitem{MR1464150}
{\sc M.~Farhloul and M.~Fortin}, {\em Dual hybrid methods for the elasticity
  and the {S}tokes problems: a unified approach}, Numer. Math., 76 (1997),
  pp.~419--440.

\bibitem{MR1934446}
{\sc M.~Farhloul and M.~Fortin}, {\em Review and complements on mixed-hybrid
  finite element methods for fluid flows}, in Proceedings of the 9th
  {I}nternational {C}ongress on {C}omputational and {A}pplied {M}athematics
  ({L}euven, 2000), vol.~140, 2002, pp.~301--313.

\bibitem{fujinqiu}
{\sc G.~Fu, Y.~Jin, and W.~Qiu}, {\em Parameter-free superconvergent
  h(div)-conforming hdg methods for the brinkman equations}, IMA Journal of
  Numerical Analysis,  (2018), p.~dry001.

\bibitem{GopalGuzma12}
{\sc J.~Gopalakrishnan and J.~Guzm{\'a}n}, {\em A second elasticity element
  using the matrix bubble}, {IMA} J. Numer. Anal., 32 (2012), pp.~352--372.

\bibitem{gopledschoe}
{\sc J.~Gopalakrishnan, P.~L. Lederer, and J.~Sch\"{o}berl}, {\em {A mass
  conserving mixed stress formulation for the Stokes equations}}, Preprint
  arXiv:1806.07173,  (2018).

\bibitem{guzman2016h}
{\sc J.~Guzm{\'a}n, C.-W. Shu, and F.~A. Sequeira}, {\em H (div) conforming and
  dg methods for incompressible euler’s equations}, IMA Journal of Numerical
  Analysis,  (2016), p.~drw054.

\bibitem{stenberg2011}
{\sc J.~K\"onn\"o and R.~Stenberg}, {\em Numerical computations with
  {H(div)}-finite elements for the {Brinkman} problem}, Computational
  Geosciences, 16 (2012), pp.~139--158.

\bibitem{ledlehrschoe2017relaxedpartI}
{\sc P.~L. Lederer, C.~Lehrenfeld, and J.~Sch\"{o}berl}, {\em {Hybrid
  Discontinuous {Galerkin} methods with relaxed {$H(div)$}-conformity for
  incompressible flows. Part I}}, to appear in SIAM journal on numerical
  analysis (preprint arXiv:1707.02782),  (2017).

\bibitem{ledlehrschoe2018relaxedpartII}
\leavevmode\vrule height 2pt depth -1.6pt width 23pt, {\em {Hybrid
  Discontinuous {Galerkin} methods with relaxed {$H(div)$}-conformity for
  incompressible flows. Part II}}, to appear in ESAIM: M2AN (preprint
  arXiv:1805.06787),  (2018).

\bibitem{2016arXiv160903701L}
{\sc P.~L. Lederer, A.~Linke, C.~Merdon, and J.~Sch\"oberl}, {\em
  Divergence-free {R}econstruction {O}perators for {P}ressure-{R}obust {S}tokes
  {D}iscretizations with {C}ontinuous {P}ressure {F}inite {E}lements}, SIAM J.
  Numer. Anal., 55 (2017), pp.~1291--1314.

\bibitem{LedererSchoeberl2017}
{\sc P.~L. Lederer and J.~Sch\"oberl}, {\em {Polynomial robust stability
  analysis for $H$(div)-conforming finite elements for the Stokes equations}},
  IMA Journal of Numerical Analysis,  (2017), p.~drx051.

\bibitem{LS_CMAME_2016}
{\sc C.~Lehrenfeld and J.~Sch\"{o}berl}, {\em High order exactly
  divergence-free hybrid discontinuous galerkin methods for unsteady
  incompressible flows}, Computer Methods in Applied Mechanics and Engineering,
  307 (2016), pp.~339 -- 361.

\bibitem{RaviaThoma77}
{\sc P.-A. Raviart and J.~M. Thomas}, {\em A mixed finite element method for
  2nd order elliptic problems}, in Mathematical aspects of finite element
  methods (Proc. Conf., Consiglio Naz. delle Ricerche (C.N.R.), Rome, 1975),
  Springer, Berlin, 1977, pp.~292--315. Lecture Notes in Math., Vol. 606.

\bibitem{MR0483555}
\leavevmode\vrule height 2pt depth -1.6pt width 23pt, {\em A mixed finite
  element method for 2nd order elliptic problems},  (1977), pp.~292--315.
  Lecture Notes in Math., Vol. 606.

\bibitem{netgen}
{\sc J.~Sch{\"o}berl}, {\em {NETGEN An advancing front 2D/3D-mesh generator
  based on abstract rules}}, Computing and Visualization in Science, 1 (1997),
  pp.~41--52.

\bibitem{schoeberl2014cpp11}
{\sc J.~Sch\"oberl}, {\em C++11 implementation of finite elements in
  {NGSolve}}, Tech. Rep. ASC-2014-30, Institute for Analysis and Scientific
  Computing, September 2014.

\bibitem{Stenberg1988}
{\sc R.~Stenberg}, {\em A family of mixed finite elements for the elasticity
  problem}, Numerische Mathematik, 53 (1988), pp.~513--538.

\end{thebibliography}
\end{document}